\documentclass[11pt]{amsart}

\usepackage{amscd,latexsym,amsthm,amsfonts,amssymb,amsmath,amsxtra,eucal}
\usepackage[all]{xy}
\usepackage[pdftex,bookmarks,colorlinks,pdfmenubar]{hyperref}

\usepackage{verbatim}
\usepackage{mathabx}
\usepackage{mathrsfs}
\usepackage{bbm}

\renewcommand\theequation{\thesection.\arabic{equation}}

\newcommand{\BC}{{\mathbb {C}}}

\newcommand{\BZ}{{\mathbb {Z}}}

\newcommand{\CP}{{\mathcal {P}}}

\newcommand{\Qlb}{\overline{\mathbb{Q}}_{\ell}}

\newcommand{\Aut}{{\mathrm{Aut}}}

\newcommand{\Gal}{{\mathrm{Gal}}}

\newcommand{\rank}{{\mathrm{rank}}}

\newcommand{\Spec}{{\mathrm{Spec}}}

\newcommand{\wt}{\widetilde}

\newtheorem{thm}{Theorem}[section]
\newtheorem{cor}[thm]{Corollary}
\newtheorem{lem}[thm]{Lemma}
\newtheorem{prop}[thm]{Proposition}

\newtheorem {ques/conj}[thm]{Question/Conjecture}

\newtheorem{defn}[thm]{Definition}

{\theoremstyle{definition}
\newtheorem{rmk}[thm]{Remark}}

\newtheorem{exmp}[thm]{Example}

\usepackage[numbers]{natbib}

\newcommand{\select}[1]{{\it{#1}}}

\begin{document}
\renewcommand{\theequation}{\arabic{equation}}
\numberwithin{equation}{section}

\title[almost multiplicity-one]{Almost multiplicity-one property of spherical varieties over finite fields}

\date{\today}

\author[Fulin Chen]{Fulin Chen$^1$}
\address{School of Mathematical Sciences, Xiamen University, Xiamen 361005, Fujian, P.R. China}
  \email{chenf@xmu.edu.cn}
\thanks{1. Partially supported by China NSF grant (No. 12471029) and Xiamen NSF grant (No. 3502Z202473005)}

\author[Fang Shi]{Fang Shi$^2$}

\address{School of Mathematical Sciences, Xiamen University, Xiamen 361005, Fujian, P.R. China}

\email{11935007@zju.edu.cn}

\thanks{2. Partially supported by the Postdoctoral Fellowship Program of CPSF (No. GZC20252027)}

\author[Shaobin Tan]{Shaobin Tan$^3$}
\address{School of Mathematical Sciences, Xiamen University, Xiamen 361005, Fujian, P.R. China}
 \email{tans@xmu.edu.cn}
 \thanks{3. Partially supported by China NSF grant (No. 12131018)}

\subjclass[2010]{Primary  20C33}

\maketitle

\begin{abstract}
Let $H$ be a connected algebraic subgroup of a connected reductive group $G$ over a finite field $\mathbb F_q$ such that $G/H$ is a $G$-spherical variety, i.e.,  $G/H$ has an open dense $B$-orbit for each Borel subgroup $B$ of $G$. 
We formulate, for the pair $(G,H)$,  an almost multiplicity-one property. Then we establish a criterion for this  property in terms of the $B$-stabilizers on $G/H$. In particular, we will see that this property is analogous to the strongly tempered condition in characteristic $0$.
\end{abstract}

\section{Introduction}

\subsection{Background}
Let  $\mathfrak {G}$ be a finite group and $\mathfrak{H}$  a subgroup of $\mathfrak {G}$. A fundamental problem in representation theory is to study the decomposition of  an irreducible representation  $\pi$ of $\mathfrak {G}$ into irreducible constituents  when restricted to  $\mathfrak {H}$. It is natural to ask whether such a decomposition is multiplicity free, and how frequently it fails to be multiplicity free as $\pi$ varies. We often say that the pair $(\mathfrak{G},\mathfrak{H})$ has the multiplicity-one property if for every irreducible representations $\pi$ of $\mathfrak{G}$, the trivial representation of $\mathfrak{H}$ occurs in $\pi|_{\mathfrak{H}}$ (the restriction of $\pi$ to $\mathfrak{H}$) with multiplicity at most $1$. 

The recent paper \cite{BZSV} proposes a duality in the relative Langlands program, recovering at a numerical level the relationship between a period on a reductive group $\mathcal G$ and an $L$-function attached to its dual group $\check {\mathcal G}$. An interesting example is the duality between the GGP period \cite{GGP} and the equal-rank $\theta$-correspondence \cite{Ra}. Notably, spherical varieties play a crucial role in \cite{BZSV} and \cite{SV}. It is shown in \cite{WZ} that for many strongly tempered spherical varieties in question, the summation of the multiplicities is always equal to $1$ over every local tempered $L$-packet. This motivates studying the corresponding periods problem in the finite-field setting.

 We  consider some finite-field counterparts of a class of periods introduced in \cite{BZSV}, and study their multiplicity-one property. However, this property may  be ill-defined: for a fixed spherical pair over a finite field, it is common that not all irreducible representations have this property  even when the property is well-established for the corresponding number-field prototype (though it holds for ``almost all'' irreducible representations of the finite Lie group). See \cite{R} and \cite{W} for the GGP problem as an example. This suggests that we should formulate the multiplicity-one property in a weaker sense.

In this paper, we formulate an almost multiplicity-one property, and deduce a criterion for this property in terms of the orbits of a Borel subgroup on the spherical variety and their stabilizers. In particular, it is straightforward to see that the finite-field analog of various strongly tempered spherical varieties considered in \cite{WZ} (including the basic cases of Bessel models in the sense of \cite{GGP}) fulfills this criterion. 
\subsection{Main results and examples}\label{subsec-intro-results}
Fix a finite field $\mathbb F_q$, where $q$ is a power of a prime. Fix an algebraic closure $\mathrm k$ of $\mathbb F_q$.
Fix a connected reductive group $G_0$ (over $\mathbb F_q$), and let $H_0$ be a connected algebraic subgroup of $G_0$. Let $G$ (resp., $H$) be the pullback of $G_0$ (resp., $H_0$) to $\mathrm k$. Suppose further that $\mathcal X:=G/H$ is a spherical variety with respect to the left-$G$-action: that is, there exists an open dense $B$-orbit on $G/H$ for each Borel subgroup $B$ of $G$. (We also say that $H$ is a spherical subgroup of $G$.) Note that both $G$ and $H$ are equipped with a geometric Frobenius endomorphism $F$. For a positive integer $n$, let $F^n$ denote the $n$-th power of $F$, and let $G^{F^n}$ (resp., $H^{F^n}$) denote the fixed-point set of the action induced by $F^n$ on $G(\mathrm k)$ (resp., $H(\mathrm k)$). 

For a finite (abstract) group $K$, define the inner product $\langle \cdot,\cdot\rangle_K$ by
$$\langle\pi_1,\pi_2\rangle_K=\frac{1}{|K|}\sum\limits_{k\in K}\pi_1(k)\overline {\pi_2(k)},$$ where $\pi_1$ and $\pi_2$ are class functions on $K$. 
We often do not distinguish a finite-dimensional representation of $K$ from its character. Let $1_K$ denote the trivial representation of $K$. For a finite set $S$, let $\# S$ denote its cardinality.

\begin{defn}\label{de-mainratio}\label{def-almostmulone} For a positive integer $n$,
 let   $\mathcal {E} (G,n)$ be the set of (equivalence classes of) irreducible representations of $G^{F^n}$, and  let $\mathcal E^\diamondsuit(G,H,n)$ be the set of (equivalence classes of) irreducible representations $\pi$ of $G^{F^n}$ satisfying 
    $$
    \langle\pi,1_{H^{F^n}}\rangle_{H^{F^n}}=1.$$
    We say that the almost multiplicity-one property holds for $(G,H)$ (or holds for $H$, for simplicity)  if 
    $$
    \lim_{m\to \infty} \frac{\#\mathcal E^{\diamondsuit}(G,H,m)}{\#\mathcal E(G,m)}=1.
    $$
\end{defn}

For a Borel pair $(B,T)$ of $G$, we set $\mathrm d_{T}^B:B\to T$ to be the quotient map that realizes $T$ as the reductive quotient of $B$ and provides a section for the natural inclusion $T\hookrightarrow B$. For an algebraic subgroup $C$ of $B$, let $\mathrm d_T^B(C)$ denote the image of $C$ under the map $\mathrm d_T^B$; here, we view $\mathrm d_T^B(C)$ as an algebraic subgroup of $T$. For $w\in B(\mathrm k)\backslash G(\mathrm k)/H(\mathrm k)$, we sometimes write $B\cap w H w^{-1}$ to denote the group $B\cap \wt w H \wt w^{-1}$, where $\wt w\in G(\mathrm k)$ is a representative of $w$. In such cases, the relevant results are independent of the choice of $\wt w$.

The following is a corollary of Theorem \ref{thm-main}. 

\begin{thm}\label{thm-intro}
    The following statements are equivalent. 
    \begin{itemize}
        \item[(i)] The almost multiplicity-one property holds for $H$.
        \item[(ii)] Fix any Borel pair $(B,T)$ of $G$, there exists a unique $w\in B(\mathrm k)\backslash G(\mathrm k)/H(\mathrm k)$ such that  $$\mathrm d_T^B(B\cap w Hw^{-1})$$ is a finite algebraic group; moreover, for this specific $w$, the algebraic group $B\cap w Hw^{-1}$ is  trivial.
        (In this theorem, we endow $B\cap w Hw^{-1}$ with the reduced scheme structure for every $w\in B(\mathrm k)\backslash G(\mathrm k)/H(\mathrm k)$.)
    \end{itemize}
\end{thm}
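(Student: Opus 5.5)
Since the statement is announced as a corollary of Theorem \ref{thm-main}, the plan is to prove it directly by a counting argument as $m\to\infty$. The tools are Deligne--Lusztig theory and the Lang--Weil estimates. The basic observation is that, by Frobenius reciprocity, $\langle\pi,1_{H^{F^m}}\rangle$ is the multiplicity of $\pi$ in $\mathrm{Ind}_{H^{F^m}}^{G^{F^m}}1$. Hence the almost multiplicity-one property says that, for all but a proportion $o(1)$ of the $\pi\in\mathcal E(G,m)$, this multiplicity is exactly $1$. Since $\#\mathcal E(G,m)\sim c\,q^{m\,\rank G}$ (number of semisimple classes up to bounded factors), everything will reduce to comparing two asymptotics:
$$A_m=\sum_\pi \langle\pi,1_{H^{F^m}}\rangle,\qquad B_m=\sum_\pi \langle\pi,1_{H^{F^m}}\rangle^2=\#\big(H^{F^m}\backslash G^{F^m}/H^{F^m}\big).$$
The same kind of sums, restricted to generic $\pi$, will also be needed.

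\textbf{Step 1.} Restrict to $\pi$ lying in a Deligne--Lusztig series $\mathcal E(G,(T',\theta))$ with $\theta$ in general position. These $\pi=\pm R_{T'}^\theta$ already account for a proportion $1-o(1)$ of $\mathcal E(G,m)$, because the non-regular characters lie on a proper subvariety of the dual torus. For such $\pi$, the multiplicity $\langle R_{T'}^\theta,1\rangle_{H^{F^m}}$ can be computed by the Mackey-type formula for the restriction of $R_{T'}^\theta$ to $H$. Equivalently, one computes the cohomology of $\{g: g^{-1}F(g)\in U\}/H$ by decomposing $G$ into $B\times H$ double cosets $B wH$.

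Each orbit $BwH\cong B\times^{B\cap wHw^{-1}}H$ should contribute a term of the form
$$\langle \theta|_{\mathrm d_T^B(B\cap wHw^{-1})^{F^m}},1\rangle \cdot(\text{Euler characteristic of the unipotent part}),$$
with the unipotent part contributing an affine space, so only $\pm1$. This leads to three cases.
\begin{itemize}
\item If $\mathrm d_T^B(B\cap wHw^{-1})$ has positive dimension, the contribution vanishes unless $\theta$ is trivial on a subtorus, which happens only for a proportion $O(q^{-m})$ of $\theta$.
\item If the image is finite, $\theta$ restricted to its points is constrained to finitely many values, each occurring with positive density.
\item Finiteness of the image forces $\dim(B\cap wHw^{-1})$ to equal the unipotent dimension. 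If $B\cap wHw^{-1}$ is moreover trivial, the orbit is the open orbit and contributes exactly $1$ for every generic $\theta$.
\end{itemize}

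\textbf{Step 2 (the equivalence).} Put $N=\#\{w:\mathrm d_T^B(B\cap wHw^{-1})\text{ finite}\}$. Over the generic $\theta$, averaging shows that the mean multiplicity equals $N$, up to a contribution from the finite component groups.

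For (ii)$\Rightarrow$(i): there is exactly one such $w$, and it has trivial stabilizer, so the multiplicity is $1$ for all $\theta$ outside a proper subvariety. The ratio therefore tends to $1$.

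For (i)$\Rightarrow$(ii), argue by contraposition.
\begin{itemize}
\item If $N\ge2$, or $N=0$, the generic multiplicity is $\neq1$ on a positive-density set.
\item If $N=1$ but the stabilizer is nontrivial, there are two sub-cases. If the stabilizer has a finite nontrivial image, $\theta$ restricted to that image is nontrivial for a positive proportion of $\theta$, which gives multiplicity $0$. If the image is trivial but the stabilizer is a nontrivial unipotent group, the Euler characteristic computation should give $0$ or a shifted count. This case needs care.
\end{itemize}

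\textbf{Main obstacle.} The hard step is making the per-orbit contribution rigorous. It requires an honest Mackey formula for $\langle R_{T}^\theta,1\rangle_{H^{F^m}}$ when $H$ is not reductive and the $B$-orbits are not Frobenius-stable in a uniform way. One must also control the non-split forms of $T$, which vary with $m$; taking $m$ divisible enough to make everything split would help, but the limit is over all $m$. I would first try to compute $\sum_{g\in G^{F^m}/H^{F^m}}$ of the Deligne--Lusztig character via the character formula, stratified by $B$-orbits, and then apply Lang--Weil to each stratum. This is presumably the content of Theorem \ref{thm-main}, and I would invoke it at this point if its statement allows.
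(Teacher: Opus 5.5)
You are right that citing Theorem \ref{thm-main} settles the statement: it is the equivalence (1)$\Leftrightarrow$(4) there, since a trivial group is connected unipotent, and that is all the paper does. Your direct sketch, however, does not reach the statement. It has two genuine gaps, one in each direction.

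\textbf{Gap 1: non-split tori in (ii)$\Rightarrow$(i).} Your orbit-by-orbit formula for the full multiplicity is only available for principal series. Suppose the $F^m$-stable torus $T$ lies in no $F^m$-stable Borel subgroup. Then $R_T^\theta$ is not induced from $B^{F^m}$, so there is no Mackey formula for $\langle R_T^\theta,1\rangle_{H^{F^m}}$. The decomposition into $B\times H$-orbits (with $B\supset T$) is also not Frobenius-compatible. Taking $m$ divisible does not remove such tori. Once $F^m$ acts trivially on $\mathrm W$, classes of $F^m$-stable maximal tori correspond to conjugacy classes of $\mathrm W$, and only the trivial class gives tori in $F^m$-stable Borels. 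Principal series are then only about a $1/\#\mathrm W$ fraction of the regular Deligne--Lusztig characters (for split $G$). So (ii)$\Rightarrow$(i) is unproved for most representations.

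The paper avoids computing the full trace orbit by orbit:
\begin{itemize}
\item It realizes $R_{T,\chi}$ as the trace function of $\mathscr R_{T,\chi}\cong\pi_!\rho^*\mathscr L_\chi$, for any Borel $B\supset T$, $F$-stable or not.
\item Using weights and Lemma \ref{lem-const}, it identifies the multiplicity with $q^{-n\dim G}$ times the trace on $H^{2\dim G}_c(\mathcal Y^H_{T,B},\rho^*_{T,B,H}\mathscr L_\chi)$ alone (Theorem \ref{thm-pertopcohomo}).
\item That vector space does split over orbits, without any Frobenius compatibility. For generic $\chi$ only the open orbit survives and contributes a line, so the multiplicity is a root of unity, hence $\pm1$ by integrality.
\end{itemize}
This needs only condition $\star$ of Remark \ref{rm-changeBcondition}.

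For (i)$\Rightarrow$(ii) you can restrict to principal series, which form a positive proportion by Corollary \ref{cor-regularprincipal}. There your cases $N\neq1$ and ``finite nontrivial image'' become rigorous, as in Proposition \ref{pro-principal-efficiency}. In the second case, though, the reliable contradiction is not multiplicity $0$. A disconnected stabilizer splits the geometric orbit into at least two rational orbits, giving multiplicity $\ge2$ for every $\chi$ trivial on the finite image.

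\textbf{Gap 2: the connected unipotent case in (i)$\Rightarrow$(ii).} The sub-case you flag as needing care is $N=1$ with $B\cap wHw^{-1}$ a nontrivial \emph{connected} unipotent group. It cannot be handled by contraposition at all. There the Mackey formula gives multiplicity exactly $1$ for every principal series whose character is nontrivial on the rational points of the positive-dimensional tori $\mathrm d_T^B(B\cap vHv^{-1})$, $v\neq w$. In fact Theorem \ref{thm-almostmulone} shows that (i) holds under exactly these hypotheses. So no multiplicity is ``$0$ or shifted''.

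What is true is that this configuration never occurs, and proving this is the real content of (i)$\Rightarrow$(ii). The paper does it in Theorem \ref{thm-refine-main}:
\begin{itemize}
\item For generic regular $\chi$, it moves the top cohomology to the unipotent locus (Corollaries \ref{cor-adjunction-unipotent} and \ref{cor-unipotent-top-cohomo}). There $R_{T,\chi}=R_{T,1}$.
\item It sums $\mathrm P_{T,\chi_T}(1)=(-1)^{\sigma(T)+\sigma(G)}$ against the Steinberg expansion of Lemma \ref{lem-steinberg-combi}.
\item Since $\mathrm{St}_G$ vanishes at nontrivial unipotents, this yields $\#\mathcal X^{F^n}/\#\mathrm B^{F^n}\to1$ along an arithmetic progression of $n$.
\item Hence $\dim G/H=\dim B$, so the connected unipotent generic stabilizer is finite, hence trivial.
\end{itemize}
Your plan has no substitute for this geometric step.
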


Note that condition (ii) in Theorem \ref{thm-intro} implies the connectedness of the generic $B$-stabilizer on $G/H$.

\begin{rmk}
    In Section \ref{sec-0-counterpart}, we will see that condition (ii) in Theorem \ref{thm-intro} serves as an analog of the characteristic $0$ condition requiring that the dual group $\check{G}_{\mathcal X}$ of the spherical variety $\mathcal X$ equals the dual group $\check{G}$ of $G$. 
\end{rmk}

\begin{exmp}Fix a positive integer $n$. It is well known that $\mathrm {Sp}_{2n}/\mathrm{GL}_n$ is a spherical variety of $\mathrm {Sp}_{2n}$.
In this example, let $(G,H)=(\mathrm {Sp}_{2n},\mathrm{GL}_n)$.  Fix a Borel subgroup $B$ of $G$.
    
    We first consider the case where the base field has characteristic $p\neq 2$.  
    The generic $B$-stabilizer is disconnected. Consequently, by the above theorem, the almost multiplicity-one property does not hold for $(G,H)$.
    
    We next consider the case where the base field has characteristic $p=2$. 
  A tedious exercise in linear algebra shows that  condition (ii) in Theorem \ref{thm-intro} holds true for $(G,H)$. Consequently, the almost multiplicity-one property holds for $(G,H)$. 
    
\end{exmp}

\begin{rmk}\label{rm-intro}
    We note that the condition  (ii) in Theorem \ref{thm-intro} is  geometrical. In particular, we have:
    \begin{itemize}
        \item[~] 
        For two pairs $(H_0^1, G_0^1)$ and $(H_0^2 , G_0^2)$ over $\mathbb{F}_q$ with $H_0^1$ (resp., $H_0^2$) spherical in $G_0^1$ (resp., $G_0^2$), if the pairs $(H^1 , G^1)$ and $(H^2,G^2)$ are isomorphic over $\mathrm{k}$, then the almost multiplicity-one property holds for $(G^1,H^1)$ if and only if it holds for $(G^2,H^2)$.
    \end{itemize}
    
\end{rmk}

\begin{exmp}\label{example-U-VS-GL}
    In this example we consider the pairs $(\mathrm {GL}_n\times \mathrm {GL}_{n-1} ,\mathrm {GL}_{n-1})$ and $(\mathrm U_n\times \mathrm U_{n-1},\mathrm U_{n-1})$. The corresponding periods are finite-field analogs of the basic cases of Bessel models in the sense of \cite{GGP}. In view of Remark \ref{rm-intro}, the almost multiplicity-one property holds for $(\mathrm {GL}_n\times \mathrm {GL}_{n-1} ,\mathrm {GL}_{n-1})$ if and only if it holds for $(\mathrm U_n\times \mathrm U_{n-1},\mathrm U_{n-1})$. (This property is known to hold---a fact well established among experts---and the condition (ii) in Theorem \ref{thm-intro} can be verified directly for these cases.)
\end{exmp}

\subsection{Structure of this paper}

Sections 2--5 are dedicated to proving Theorem \ref{thm-main}. In Section \ref{sec-regular-dl}, we review \cite{DL} and  show  that almost all irreducible representations (identified with their characters) are, up to a sign, regular Deligne-Lusztig characters.

In Section \ref{sec-formulation-efficient}, we formulate the almost multiplicity-one property and deduce a necessary condition, which we rephrase as $i)_B+ii)_B$ in Remark \ref{rm-changeBcondition}; note that $i)_B+ii)_B$ is a priori weaker than the condition (ii) introduced in Theorem \ref{thm-intro}. The methods adopted in Section \ref{sec-formulation-efficient} are elementary: the classical Mackey formula plays the most important role. 

Having established a necessary condition via Mackey theory, we adopt a cohomological perspective in Section \ref{sec-cohomological-interpret}. 
Using Deligne's weight theory,  we show in Theorem \ref{thm-pertopcohomo} that the multiplicity in question for a Deligne-Lusztig character is indeed the trace of the Frobenius operator on an associated top cohomology space. In Section \ref{sec-thegeometry}, we study the geometry of specific objects arising from Section \ref{sec-cohomological-interpret} and complete the proof of the main theorem: Theorem \ref{thm-almostmulone} establishes that $i)_B+ii)_B$ implies condition (i) of Theorem \ref{thm-intro}, while Theorem \ref{thm-refine-main} strengthens $i)_B + ii)_B$ to condition (ii) in Theorem \ref{thm-intro}. The proof of Theorem \ref{thm-refine-main} rests on  properties of the Steinberg representation and a point-counting procedure.  The conclusion is stated as Theorem \ref{thm-main}.

Finally,  Section \ref{sec-0-counterpart} examines the characteristic $0$ counterpart---the strongly tempered hyperspherical varieties in the sense of \cite{BZSV,WZ}. Remark \ref{rm-strtempered} and the subsequent discussion show that the condition $i)_B+ii)_B$ precisely corresponds to the requirement that the dual group $\check G_\mathcal X$ of $\mathcal X$ equals the dual group $\check G$ of $G$.
The analysis relies on \cite[Theorem 2.3]{Kno2} and \cite[Theorem 6.2]{Kno}.

\subsection{Convention}\label{s-convention}
For a scheme $X_0$ over $\mathbb F_q$, we will frequently denote its pullback to $\mathrm k$ by $X$. Then we have the geometric Frobenius endomorphism $F:X\to X$. Let $X^F$ denote the fixed-point set of $F$ on $X(\mathrm k)$.


In the situation of the previous paragraph, we will frequently consider the scheme $X_0\times_{\Spec (\mathbb F_q)}\Spec (\mathbb{F}_{q^n})$ for positive integers $n$. We view $\mathbb F_{q^n}$ as a subfield of $\mathrm k$. We have a natural identification $$\left(X_0\times_{\Spec (\mathbb F_q)}\Spec (\mathbb{F}_{q^n})\right)\times_{\Spec (\mathbb{F}_{q^n})}\Spec(\mathrm k)\cong X.$$ Then the geometric Frobenius endomorphism with respect to this pullback is identified with $F^n$, the $n$-th power of the endomorphism $F$ introduced in the above paragraph. Let $X^{F^n}$ denote the fixed-point set of $F^n$ on $X(\mathrm k)$.

We adopt the notation introduced in the first two paragraphs of Subsection \ref{subsec-intro-results} throughout this paper.

For a Weil sheaf $(\mathscr F, a:F^* \mathscr F\xrightarrow{\sim} \mathscr F)$ on $X$ and an integer $i$, we set 
$$
\mathrm {Tr}(F^*,H_c^{i}(X,\mathscr F)) 
$$
to be the trace of the endomorphism on $H_c^{i}(X,\mathscr F)$ induced by applying $H_c^{i}(X,\_)$ to the composition $$
\mathscr F \to F_*F^* \mathscr F\to F_* \mathscr F,
$$ 
where the first arrow is induced by the adjunction, and the second is induced by the Weil structure $a:F^* \mathscr F \xrightarrow{\sim} \mathscr F$. Also, we set 
$$
\mathrm {Tr}(F^*,H_c^{\bullet}(X,\mathscr F)) =\sum_{i\in \mathbb Z}(-1)^i\mathrm {Tr}(F^*,H_c^{i}(X,\mathscr F)),
$$
where on the right-hand side, there must be only finitely many $i$ such that $H_c^{i}(X,\mathscr F)$ is non-vanishing.

For a maximal torus $T$ of $G$, let $W(T)$ denote the Weyl group of $T$. By definition, we have $W(T):=N(T,T)/T$, where $N(T,T):=\{g\in G:gTg^{-1}=T\}$. We view $W(T)$ as a finite abstract group. When $T$ is assumed to be $F$-stable,  we view $W(T)$ as a finite abstract group endowed with an action of the geometric Frobenius in a natural way, and let $W(T)^F$ denote the fixed-point group.

We sometimes denote a Borel pair of $G$ by $(\mathrm B,\mathrm T)$ in roman type to emphasize that both $\mathrm T$ and $\mathrm B$ are $F$-stable.

We sometimes say that $\mathrm W$ is the Weyl group of $G$. This means that we can choose any $F$-stable Borel pair $(\mathrm B,\mathrm T)$ and define $\mathrm W=W(\mathrm T)$. (In such cases, the corresponding results are independent of the choice of $(\mathrm B , \mathrm T)$.)

The identity element of  an algebraic group is frequently denoted by $e$.


For a reductive group $K_0$ over $\mathbb F_q$, we define $\sigma(K)$ to be the $\mathbb F_q$-rank of $K$, where $K$ denotes the pullback of $K_0$ to $\mathrm k$.

We introduce the following convention for simplicity. Fix for the moment a family $\mathbf{A}_1,\ldots,\mathbf{A}_n$ of statements, where $n$ is a positive integer. Let $\mathbf{A}_1+\ldots+\mathbf{A}_n$ denote the statement such that $\mathbf{A}_1+\ldots+\mathbf{A}_n$ holds true if and only if $\mathbf{A}_i$ holds true for $1\leq i\leq n$.

\section{Preliminaries on Deligne-Lusztig Characters}\label{sec-regular-dl}
 In this section, we show that ``almost all" irreducible representations are regular Deligne-Lusztig characters.


\subsection{Regular Deligne-Lusztig Characters}
In this subsection, let $T$ be an $F$-stable maximal torus of $G$ and $\chi:T^F\to \Qlb^\times$ be a character. The virtual character $R_{T,\chi}$ (which is denoted by $R_{T}^\chi$ in \cite{DL}) of $G^F$ is defined in \cite{DL}.

\begin{defn}\label{def-regularch}
    We say that the character $\chi$ is regular if $\chi$ is not kept fixed by any nontrivial element in $W(T)^F$. 
\end{defn}

  Notice that the notion of ``regular" introduced in Definition \ref{def-regularch} is equivalent to the notion of ``in general position" introduced in \cite[Definition 5.15]{DL}.

\begin{defn}\label{def-regulardl}
   We say that the virtual character $R_{T,\chi}$ is a regular Deligne-Lusztig character if $\chi$ is regular.
 \end{defn}

The following follows from \cite[Proposition 7.4]{DL}.
\begin{prop}\label{regulardl-irr}
    Let the notation be as in Definition \ref{def-regulardl}. Suppose that $R_{T,\chi}$ is a regular Deligne-Lusztig character. Then $(-1)^{\sigma(G)+\sigma(T)}R_{T,\chi}$ is the character of an irreducible representation of $G^F$.
\end{prop}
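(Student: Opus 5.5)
The plan is to deduce Proposition \ref{regulardl-irr} from the orthogonality relations for Deligne-Lusztig characters together with the degree formula, both from \cite{DL}. First I will show that $R_{T,\chi}$ has norm one. By the inner product formula \cite[Theorem 6.8]{DL},
$$\langle R_{T,\chi},R_{T,\chi}\rangle_{G^F}=\#\{w\in W(T)^F:\ {}^w\chi=\chi\}.$$
Regularity (Definition \ref{def-regularch}) says that only $w=e$ fixes $\chi$, so the norm equals $1$. Since $R_{T,\chi}$ is a $\mathbb Z$-linear combination of irreducible characters, a norm of one forces $R_{T,\chi}=\pm\rho$ for a single irreducible character $\rho$ of $G^F$.

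Second, I will fix the sign. Evaluate at the identity using the degree formula \cite[Theorem 7.1]{DL}:
$$R_{T,\chi}(e)=(-1)^{\sigma(G)+\sigma(T)}\,|G^F|_{p'}/|T^F|.$$
Here $\sigma(\cdot)$ is the $\mathbb F_q$-rank, as in \S\ref{s-convention}, and $|G^F|_{p'}/|T^F|$ is a positive integer. Since $\rho(e)>0$, comparing signs shows that $(-1)^{\sigma(G)+\sigma(T)}R_{T,\chi}=\rho$. This is precisely the content of \cite[Proposition 7.4]{DL}, and for the paper we can simply cite it.

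The only step needing care is matching conventions. The paper's regular characters are the same as the characters "in general position" of \cite[Definition 5.15]{DL}, as the paper itself notes. One must also check that the sign $\varepsilon_G\varepsilon_T$ in \cite{DL}, which is $(-1)$ raised to the $\mathbb F_q$-rank, agrees with $(-1)^{\sigma(G)+\sigma(T)}$ as defined here. Beyond these checks there is no real obstacle.
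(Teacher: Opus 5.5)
Your proposal is correct and matches the paper, which simply cites \cite[Proposition 7.4]{DL}. Your argument, norm one from \cite[Theorem 6.8]{DL} plus the sign fixed by the degree formula \cite[Theorem 7.1]{DL}, is exactly the standard proof of that proposition.
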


For future use, we record the following results.

\begin{lem}\label{l-paraind}
    Suppose that $T$ is contained in an $F$-stable Borel subgroup $B$ of $G$. Then $R_{T,\chi}$ is the character of the parabolic induction $\mathrm{Ind}^{G^F}_{B^F}\chi$, where we view $\chi$ as a representation of $B^F$ via the natural quotient map $B^F\to T^F$.
\end{lem}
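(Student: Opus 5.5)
The plan is to compute the Deligne--Lusztig character straight from its definition in \cite{DL} and compare it with the standard character formula for an induced representation. Recall that
$$R_{T,\chi}(g)=\sum_{i}(-1)^i\,\mathrm{Tr}\bigl(g,H_c^i(X_U,\Qlb)_{\chi}\bigr),$$
where $U$ is the unipotent radical of $B$. Here $X_U=\{x\in G: x^{-1}F(x)\in U\}$, with $G^F$ acting on the left and $T^F$ acting on the right, and the subscript $\chi$ denotes the $\chi$-isotypic part.

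The key observation is what happens when $B$ is $F$-stable. Then $F(U)=U$, so the Lang map $\mathcal L:U\to U$, $u\mapsto u^{-1}F(u)$, is surjective with fibers the $U^F$-cosets. From this I get
$$X_U=\{x: x^{-1}F(x)\in U\}=G^F\cdot U.$$
To see this, take $x$ with $x^{-1}F(x)=v\in U$. Choose $u\in U$ with $u^{-1}F(u)=v$. Then $xu^{-1}$ is $F$-fixed, so $x\in G^FU$. Consequently $X_U\cong G^F\times^{U^F}U$, which is a disjoint union of copies of the affine space $U$, indexed by $G^F/U^F$.

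Next I compute the cohomology. For affine space $\mathbb A^N$ we have $H_c^i(\mathbb A^N)=\Qlb(-N)$ in degree $2N$ and zero otherwise. So $H_c^\bullet(X_U)$ is concentrated in the single degree $2\dim U$, and there it equals the permutation module $\Qlb[G^F/U^F]$ (up to a Tate twist, which does not affect the $G^F\times T^F$-action). The sign is $+1$ because the degree is even.

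One point needs checking: the $G^F\times T^F$-action must correspond to left and right translation on $G^F/U^F$. This holds because $T^F$ normalizes $U$, so right multiplication by $t\in T^F$ carries each component $gU$ to $gtU$, and it acts on the top cohomology of that component by the identity. The reason is that a connected group acts trivially on the top compactly supported cohomology of an irreducible variety, so only the permutation of components contributes.

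Taking the $\chi$-isotypic part, I obtain
$$\Qlb[G^F/U^F]_{\chi}\cong \mathrm{Ind}_{B^F}^{G^F}\chi .$$
This uses $B^F=T^FU^F$ and the fact that the functions on $G^F/U^F$ that transform by $\chi$ under $T^F$ are exactly the induced model. Depending on the convention in \cite{DL}, this may come out as $\chi^{-1}$, which then has to be matched up; this is handled by \cite[Proposition 8.2]{DL}.

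The main obstacle is precisely this bookkeeping of left versus right actions and of $\chi$ versus $\chi^{-1}$. Alternatively, the statement is literally the special case "$L=T$, $P=B$ $F$-stable" of Harish-Chandra induction, which is noted in \cite{DL} right after the definition of $R_{T,\chi}$. I would cite that remark and include the computation above as the justification.
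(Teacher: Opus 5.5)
Your computation is correct and is essentially what the paper's proof points to. The paper just cites \cite[1.17, 1.20 and Corollary 4.3]{DL}: for $F$-stable $B$ the variety $\{x: x^{-1}F(x)\in U\}$ is the disjoint union of the translates $gU$, $gU^F\in G^F/U^F$, so its cohomology is $\Qlb[G^F/U^F]$ in the single even degree $2\dim U$. Three points should be tightened.

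First, the $\chi$ versus $\chi^{-1}$ question cannot be repaired by a later result such as \cite[Proposition 8.2]{DL}. The representation $\mathrm{Ind}_{B^F}^{G^F}\chi^{-1}$ is the contragredient of $\mathrm{Ind}_{B^F}^{G^F}\chi$, and in general the two are not isomorphic; already for $G=T=\mathrm{GL}_1$ they differ unless $\chi^2=1$. Let $G^F$ act on functions on $G^F/U^F$ by left translation. If $t\in T^F$ acts on $H^{2\dim U}_c$ by pull-back along $x\mapsto xt$, the $\chi$-part is $\{f: f(gb)=\chi(b)f(g)\}\cong\mathrm{Ind}\chi^{-1}$. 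If $t$ acts by push-forward, it is $\{f: f(gb)=\chi(b)^{-1}f(g)\}\cong\mathrm{Ind}\chi$. So you must take the normalization of the $T^F$-action used in \cite{DL}; it is the one giving $\mathrm{Ind}\chi$, and that is what the cited passage records.

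Second, $R_{T,\chi}$ is a priori defined via some Borel subgroup containing $T$, and you compute it with the given $F$-stable $B$. This uses the independence of $R^{\chi}_{T\subset B}$ from $B$, which you should state explicitly. That is presumably why the paper also cites \cite[Corollary 4.3]{DL}.

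Third, for $t\neq e$, right translation by $t$ moves $gU$ to a different component $gtU$. So "acts by the identity on that component" is not literally meaningful as written. The correct statement is as follows. Identify both components with $U$ by left translation; the map then becomes $u\mapsto t^{-1}ut$. This comes from the action of the connected group $T$ on $U$, so it is trivial on $H^{2\dim U}_c(U)$. Hence only the permutation of components contributes, as you want.
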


\begin{proof}This is well known. See \cite[1.20 and 1.17]{DL} and \cite[Corollary 4.3]{DL}.
\end{proof}

 The following follows from \cite[Corollary 7.7]{DL}.
\begin{lem}\label{dl-cor7.7}
    For any irreducible representation $\rho$ of $G^F$, there exist an $F$-stable torus $S$ and a character $\theta$ of $S^F$ such that $\langle \rho, R_{S,\theta}\rangle_{G^F}\neq 0$.
\end{lem}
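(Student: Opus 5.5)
The statement is essentially \cite[Corollary 7.7]{DL}, so the plan is to deduce it from the orthogonality relations and the decomposition of the regular representation. We work with the regular character $\mathrm{reg}_{G^F}$ of $G^F$. Deligne and Lusztig show (\cite[Corollary 7.7]{DL}, itself a consequence of the character formula \cite[Theorem 4.2]{DL} and the orthogonality relations \cite[Theorem 6.8]{DL}) that $\mathrm{reg}_{G^F}$ is a rational linear combination of Deligne-Lusztig characters:
$$
\mathrm{reg}_{G^F}=\frac{1}{|W(\mathrm T)|}\sum_{(S,\theta)} c_{S}\, R_{S,\theta},
$$
where $(S,\theta)$ runs over pairs of an $F$-stable maximal torus $S$ and a character $\theta$ of $S^F$, and the $c_S$ are explicit nonzero rational constants, essentially $\pm\, |G^F|_{p}$-type factors divided by $|S^F|$. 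The exact values of the $c_S$ are irrelevant for our purpose.

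Given this identity, the argument is short. Let $\rho$ be irreducible. Then
$$
\langle \rho,\mathrm{reg}_{G^F}\rangle_{G^F}=\rho(e)=\dim\rho>0.
$$
Pairing both sides of the displayed identity with $\rho$ and using that $\langle\cdot,\cdot\rangle_{G^F}$ is sesquilinear gives
$$
0\neq \dim\rho=\sum_{(S,\theta)} \frac{c_S}{|W(\mathrm T)|}\,\langle \rho, R_{S,\theta}\rangle_{G^F}.
$$
A nonzero finite sum must contain a nonzero term, so some $\langle\rho,R_{S,\theta}\rangle_{G^F}\neq 0$. This is exactly the conclusion of the statement.

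The only real obstacle is establishing the expansion of $\mathrm{reg}_{G^F}$. If we do not simply quote it, we would first compute $\sum_\theta R_{S,\theta}$ via the character formula. Summing over $\theta$ kills all contributions except those from unipotent elements. This yields the class function $|S^F|\cdot$ (the Green function $Q_S$ supported on unipotents) suitably induced. Averaging over $G^F$-classes of tori with weights $1/|W(S)^F|$, and using the identity $\sum_S \frac{1}{|W(S)^F|}\epsilon_G\epsilon_S Q_S(u)$ (which vanishes off $u=e$ after the appropriate normalization), we obtain the delta function at $e$, which is a multiple of $\mathrm{reg}_{G^F}$. Since the paper allows us to cite \cite{DL}, we will simply invoke \cite[Corollary 7.7]{DL} for this identity and present only the two-line pairing argument above.
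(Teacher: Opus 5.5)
Your proposal is correct and takes essentially the paper's route. The paper simply cites \cite[Corollary 7.7]{DL}, and your pairing of $\rho$ against the expansion of the regular character of $G^F$ as a rational combination of the $R_{S,\theta}$ is the standard way Deligne--Lusztig deduce that corollary. One small bibliographic point: \cite[Corollary 7.7]{DL} is essentially the statement you are proving, so the expansion of $\mathrm{reg}_{G^F}$ you quote is the input to that corollary, not what it says.
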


As in  \cite[Section 6]{DL}, for another $F$-stable maximal torus $T'$ of $G$, we set $$N(T,T'):=\{g\in G:Tg=gT'\}$$
and
$$W(T,T')^F=T^F\backslash N(T,T')^F.$$
For the intertwining number, we have the following theorem, see \cite[Theorem 6.8]{DL}.
\begin{thm}\label{dl-th6.8}
    Let  $T'$ be another $F$-stable maximal torus of $G$, and let $\chi'$ be a character of ${T'}^F$. Then 
    $$
    \langle R_{T}^\chi,R_{T'}^{\chi'}\rangle_{G^F}=\# \{
    w\in W(T,T')^F:~  (\mathrm{ad}~w)(\chi')=\chi
    \}.
    $$

\end{thm}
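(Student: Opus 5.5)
The plan is to follow the strategy of \cite[Section 6]{DL}, realizing the inner product as a Lefschetz number on a quotient of a product of Deligne--Lusztig varieties. Choose Borel subgroups $B\supset T$, $B'\supset T'$ (not necessarily $F$-stable) with unipotent radicals $U,U'$. Set $\wt X=\{g\in G: g^{-1}F(g)\in U\}$, with $G^F\times T^F$ acting by $(x,t)\cdot g=xgt$, and similarly $\wt X'$. By definition $R_T^\chi$ is the $\chi$-isotypic part of $\sum_i(-1)^iH_c^i(\wt X)$, and likewise for $R_{T'}^{\chi'}$. By the Künneth formula and the fact that taking $G^F$-invariants is exact in characteristic zero,
$$
\langle R_T^\chi,R_{T'}^{\chi'}\rangle_{G^F}
$$
is the $(\chi^{-1},\chi')$-isotypic part, for the action of $T^F\times T'^F$, of the virtual module $H_c^\bullet\bigl(G^F\backslash(\wt X\times\wt X')\bigr)$. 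Here we use that $\overline{R(g)}=R(g^{-1})$, together with the fact that Deligne--Lusztig characters have rational traces up to replacing $\chi$ by $\chi^{-1}$.

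Next I will stratify the quotient. Map $(g,g')\mapsto g^{-1}g'$. Via the Bruhat decomposition $G=\bigsqcup_{w} B\dot wB'$, where $\dot w$ runs over $N(T,T')$ modulo $T$, this yields a finite partition of $G^F\backslash(\wt X\times\wt X')$ into locally closed pieces $Z_w$. These pieces are stable under $T^F\times T'^F$. By additivity of $H_c^\bullet$ it suffices to compute each piece separately. Using Lang's theorem, I will rewrite $Z_w$ as a variety of triples $(u,u',y)$ with $y\in T\dot w T'$, subject to an equation coming from $x^{-1}F(x)$. On $Z_w$ the torus
$$
H_w=\{(t,t')\in T\times T': \dot w^{-1} t\,\dot w=t' \text{ and the Frobenius-twisted compatibility holds}\}
$$
acts, extending the action of $T^F\times T'^F$.

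The key step is then a vanishing argument. Since a connected group acting on a variety acts trivially on its $\ell$-adic cohomology, the $T^F\times T'^F$-action on $H_c^\bullet(Z_w)$ factors through the component group of $H_w$ modulo its identity component. When $\dot w$ does not satisfy $F$-compatibility, i.e.\ $w\notin W(T,T')^F$, I will show that the identity component $H_w^\circ$ surjects suitably onto a nontrivial torus. I will also show that the remaining pieces are affine-space bundles, using the $U\cap \dot w U'\dot w^{-1}$-directions, so that they carry nontrivial torus actions with contractible fixed data. Their contributions to the isotypic part then cancel by a Lefschetz fixed-point argument on the torus, namely $\mathrm{Tr}$ of a torus element equals the Euler characteristic of its fixed locus, which is empty.

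For $w\in W(T,T')^F$, the stratum reduces, after trivializing the unipotent fibers (affine spaces, contributing even shifts with eigenvalue a power of $q$ which cancel in the normalized count), to a set on which $T^F\times T'^F$ acts through the quotient identifying $T'^F$ with $T^F$ via $\mathrm{ad}\,w$. Its isotypic part is therefore one-dimensional exactly when $(\mathrm{ad}\,w)(\chi')=\chi$, and zero otherwise. Summing over $w$ gives the formula.

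I expect the main obstacle to be the careful construction of the torus action on non-$F$-compatible strata and the verification that it kills the relevant isotypic parts. This is the heart of \cite[Theorem 6.8]{DL}. I would first try to handle it by filtering the Bruhat cells by length and applying \cite[6.6, 6.7]{DL}-style homotopy arguments.
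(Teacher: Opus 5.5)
The paper gives no proof of this statement; it quotes \cite[Theorem 6.8]{DL}. Your outline has the right architecture: the K\"unneth reduction to $G^F\backslash(\wt X\times\wt X')$, strata $Z_w$ coming from $B\dot wB'$, a torus acting on each stratum, and Lefschetz localization. However, the three steps that carry the content are wrong or insufficient as written.

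\textbf{First, your $H_w$ cannot extend the $T^F\times T'^F$-action.} The condition $\dot w^{-1}t\dot w=t'$ confines $H_w$ to the graph of $\mathrm{ad}(\dot w^{-1})$, and that graph does not contain $T^F\times T'^F$. The right group lives on an auxiliary model. Use the normalization $\wt X=\{g:g^{-1}F(g)\in F(U)\}$ of \cite{DL}, which is equivariantly homeomorphic to yours via $g\mapsto F(g)$. Then $Z_w\cong\{(u,u',\sigma)\in F(U)\times F(U')\times B\dot wB' : uF(\sigma)=\sigma u'\}$. It is the base of the affine-space bundle
\[
Z'_w=\{(a,b,v,v',\tau)\in F(U)\times F(U')\times U\times U'\times T' : aF(\dot w\tau)b=v\dot w\tau v'\},
\]
with projection $(a,b,v,v',\tau)\mapsto(aF(v)^{-1},\,b^{-1}F(v'),\,v\dot w\tau v')$. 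The formula $(t,t')\cdot(a,b,v,v',\tau)=(t^{-1}at,\,t'^{-1}bt',\,t^{-1}vt,\,t'^{-1}v't',\,\dot w^{-1}t^{-1}\dot w\tau t')$ defines an action on $Z'_w$ of
\[
H=\{(t,t')\in T\times T' : \mathrm{ad}(F(\dot w)^{-1})(t^{-1}F(t))=t'^{-1}F(t')\}.
\]
This $H$ contains $T^F\times T'^F$ and has dimension $\dim T$. Its identity component is the graph $\{(t,\dot w^{-1}t\dot w)\}$ exactly when $F(w)=w$, and this dichotomy is the whole mechanism.

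\textbf{Second, your vanishing argument for $w\notin W(T,T')^F$ is insufficient.} The fact that the action factors through $H/H^\circ$ only gives the geometric-conjugacy (disjointness) statement of \cite[Section 6]{DL}. It does not kill the $(\chi^{-1},\chi')$-part when $w\notin W(T,T')^F$. Vanishing of an Euler characteristic does not kill it either, since a virtual module of dimension $0$ can have nonzero isotypic parts. What you need is the localization identity $\mathcal L(g,Y)=\mathcal L(g,Y^S)$ for Lefschetz numbers. It should hold for every $g\in T^F\times T'^F$ and every torus $S$ acting on $Y$ and commuting with $g$; apply it with $Y=Z'_w$ and $S=H^\circ$. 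If $F(w)\neq w$, an $H^\circ$-fixed point would force the torus $H^\circ$ to equal the graph of $\mathrm{ad}(\dot w^{-1})$. Both have dimension $\dim T$, but the graph lies in $H$ only if $F(w)=w$. Hence $(Z'_w)^{H^\circ}=\emptyset$ and all these Lefschetz numbers vanish.

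\textbf{Third, your treatment of $w\in W(T,T')^F$ is false.} Such a stratum is in general not an affine-space bundle over a finite set. Take $G=\mathrm{SL}_2$, $T=T'$ non-split and $B=B'$, so that $W(T)^F=\{1,s\}$.
\begin{itemize}
\item The closed stratum is $Z_1\cong T^F\times\mathbb A^1$.
\item Up to the standard identifications, $\wt X$ is the Drinfeld curve $xy^q-x^qy=1$, which is irreducible.
\item Since $\dim Z_1=1$, K\"unneth gives $H^4_c(Z_s)\cong(H^2_c(\wt X)\otimes H^2_c(\wt X))^{G^F}$, which is one-dimensional.
\end{itemize}
So $Z_s$ is an irreducible surface of Euler characteristic $q+1$, not $q+1$ copies of $\mathbb A^2$.

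The correct step is again localization. Choose $\dot w\in N(T,T')^F$. The $H^\circ$-fixed points of $Z'_w$ are then $\{(1,1,1,1,\tau):\tau\in T'^F\}$. The group $T^F\times T'^F$ permutes them transitively via $\tau\mapsto\dot w^{-1}t^{-1}\dot w\tau t'$, with stabilizer $\{(t,\dot w^{-1}t\dot w)\}$. The permutation character of this set contains $\chi^{-1}\boxtimes\chi'$, with multiplicity one, exactly when $(\mathrm{ad}\,w)(\chi')=\chi$. Summing over $w$ then gives the formula.
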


\subsection{Counting irreducible representations}

In this subsection, let $n$ be a positive integer.
The pullback of $G_0\times_{\Spec (\mathbb F_q)}\Spec(\mathbb F_{q^n})$ to $\mathrm k$ is naturally identified with $G$, and the geometric Frobenius with respect to this pullback is naturally identified with $F^n$. 

\begin{rmk}
  Fix an $F$-stable maximal torus $T$ of $G$ and a character $\chi:T^F\to \Qlb^\times$.  When we say that $R_{T,\chi}$ is a Deligne-Lusztig character of $G^F$, we implicitly specify the reductive group $G_0$ over the finite field $\mathbb F_q$ and the field extension $\mathbb F_q\subset \mathrm k$. Hence we also say that $R_{T,\chi}$ is a Deligne-Lusztig character with respect to the pair $(\mathbb F_q\subset \mathrm k,G_0)$.
\end{rmk}

    We say that a virtual character $\rho$ of $G^{F^n}$ is a Deligne-Lusztig character if it is a Deligne-Lusztig character with respect to the pair $\left(\mathbb F_{q^n}\subset \mathrm k,G_0\times_{\Spec(\mathbb F_q)}\Spec(\mathbb F_{q^n}) \right)$. 

  For an $F^n$-stable maximal torus $T$ of $G$  and a character $\chi:T^{F^n}\to \Qlb^\times$, we denote the corresponding Deligne-Lusztig character of $G^{F^n}$ by $R_{T,\chi}^{(n)}$. Parallel to Definition \ref{def-regularch}, we say that $\chi$ is regular if it is not kept fixed by any nontrivial elements in $W(T)^{F^n}$. Also, we say
  that $R_{T,\chi}^{(n)}$ is a regular Deligne-Lusztig character if $\chi$ is regular.

 Note that If $n=1$, we have $R_{T,\chi}=R_{T,\chi}^{(1)}$.

\begin{defn}\label{de-regratio}
    We set $\mathrm P_{DL}(n)$ to be the ratio
    $$
   \frac{ \#\{\text{regular Deligne-Lusztig characters of $G^{F^n}$}\}}{\#\{\text{isomorphism classes of irreducible representations of $G^{F^n}$}\}}.
   $$
\end{defn}

\begin{defn}\label{def-pdlcirc} Fix an $F$-stable Borel pair $(\mathrm B,\mathrm T)$.
     For a positive integer $n$, we set $\mathrm P^\circ_{DL}(n)$ to be the ratio
    
    $$
   \frac{ \#\{\mathrm {Ind}_{\mathrm B^{F^n}}^{G^{F^n}}\chi: \chi \text{ is a regular character of $\mathrm{T}^{F^n}$} \}}{\#\{\text{isomorphism classes of irreducible representations of $G^{F^n}$}\}}.
   $$
  Here we view a character $\chi$ of $\mathrm{T}^{F^n}$ as a character of $\mathrm{B}^{F^n}$ via the natural quotient $\mathrm{B}^{F^n}\to \mathrm{T}^{F^n}$.
\end{defn}

\begin{rmk}\label{rm-numberNTX}
Fix a torus $T$ over $\mathrm k$ and its closed subscheme $X$. In this remark we recall the number $\mathbf N(T,X)$ defined in \cite[Lemma 8.2]{L5}.
    Let $X_1,X_2,\ldots,X_m$ be the irreducible components of $X$. For each nonempty $I\subset \{1,2,\ldots,m\}$, set $X_I=\bigcap \limits_{i\in I}X_i$, with $X^{(1)}_I,X^{(2)}_I,\ldots,X^{(f(I))}_I$ the connected components of $X_I$. Then we set $\mathbf N(T,X):=\sum\limits_{I} f(I)$, where the summation index $I$ ranges over the set of non-empty sets of $\{1,2,\ldots,m\}$.
\end{rmk}

The following proposition is well known. We include a proof for completeness.
\begin{prop}\label{pro-dl-vs-irrerep}
    We have the following identity
    $$
    \lim\limits_{n\to \infty}\mathrm P_{DL}(n)=1.
    $$
\end{prop}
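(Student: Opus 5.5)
The plan is to show that the number of irreducible representations of $G^{F^n}$ \emph{not} of the form $\pm R^{(n)}_{T,\chi}$ with $\chi$ regular is negligible compared with the number of regular Deligne--Lusztig characters. Let $l$ be the rank of $G$ (the dimension of a maximal torus). The target bounds are:
\begin{itemize}
\item[(a)] the number of regular Deligne--Lusztig characters of $G^{F^n}$ is at least $c\,q^{nl}$ for some constant $c>0$ independent of $n$;
\item[(b)] the number of irreducible representations that are not regular Deligne--Lusztig characters up to sign is $O(q^{n(l-1)})$.
\end{itemize}
Together these give $\mathrm P_{DL}(n)\to 1$. By Proposition \ref{regulardl-irr}, each regular $R^{(n)}_{T,\chi}$ is $\pm$ an irreducible character. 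By Theorem \ref{dl-th6.8}, two regular characters $R^{(n)}_{T,\chi}$ and $R^{(n)}_{T',\chi'}$ coincide or are orthogonal according as $(T,\chi)$ and $(T',\chi')$ are $G^{F^n}$-conjugate or not. So the numerator is the number of such conjugacy classes of regular pairs.

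For (b), take an irreducible $\rho$. By Lemma \ref{dl-cor7.7}, applied over $\mathbb F_{q^n}$, $\rho$ is a constituent of some $R^{(n)}_{S,\theta}$. If $\theta$ is regular, then $\rho=\pm R^{(n)}_{S,\theta}$ is already counted. Otherwise $\rho$ is a constituent of some nonregular $R^{(n)}_{S,\theta}$. Such a virtual character has at most $\langle R^{(n)}_{S,\theta},R^{(n)}_{S,\theta}\rangle\le |W|$ distinct irreducible constituents, by Theorem \ref{dl-th6.8}. The number of $G^{F^n}$-classes of $F^n$-stable maximal tori is at most the number of $F^n$-twisted classes in $\mathrm W$, hence at most $|\mathrm W|$. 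It therefore suffices to bound, for each $F^n$-stable torus $S$, the number of nonregular characters of $S^{F^n}$ by $C\,q^{n(l-1)}$ with $C$ independent of $n$. For (a), fix one torus $T$, say the maximal torus of an $F$-stable Borel pair. Then $|T^{F^n}|\ge (q^n-1)^l$, and the regular characters split into $W(T)^{F^n}$-orbits of size at most $|\mathrm W|$. So (a) also follows once the nonregular characters of $T^{F^n}$ number $O(q^{n(l-1)})$.

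The key step, and the main obstacle, is this uniform bound on nonregular characters. A character $\chi$ of $S^{F^n}$ is nonregular if it is fixed by some $1\ne w\in W(S)^{F^n}$. I would pass to the dual torus $S^*$, over which characters of $S^{F^n}$ correspond to points of $S^{*F^n}$ (after a choice of roots of unity), compatibly with the Weyl group action. The $w$-fixed characters then correspond to the $F^n$-points of the closed subgroup $X_w=\{t\in S^*: w(t)=t\}$. Because $w\neq 1$ acts nontrivially on $S^*$, each $X_w$ has dimension at most $l-1$. Lusztig's count \cite[Lemma 8.2]{L5} then gives $\#X_w^{F^n}\le \mathbf N(S^*,X_w)(q^n+1)^{\dim X_w}$, as recalled in Remark \ref{rm-numberNTX}. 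Since there are only finitely many $w$ and only finitely many tori up to the relevant identification, the constants $\mathbf N$ are bounded independently of $n$. The care needed here is to make the duality compatible with $F^n$ for all $n$ simultaneously and to control the twisting of $F$ on $S^*$. Summing over $w$ and over the bounded number of torus classes yields $O(q^{n(l-1)})$, which completes both (a) and (b).
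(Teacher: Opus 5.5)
Your proposal is correct and is essentially the paper's argument: the paper also combines Lemma~\ref{dl-cor7.7}, the bound on the number of constituents coming from Theorem~\ref{dl-th6.8}, and a dual-torus count of non-regular characters via \cite[Lemma 8.2]{L5}, made uniform in $n$ and in the torus by transporting a fixed $F$-stable subscheme $X_{\mathrm T^*}\subset \mathrm T^*$ to each $T^*$. The only difference is bookkeeping: the paper bounds $\mathrm P_{DL}(n)\geq \min_{(T)} F_{n,T}$ using a per-torus ratio, whereas you compare a global lower bound (a) on regular Deligne--Lusztig characters with a global upper bound (b) on the remaining irreducibles.
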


\begin{proof}

During this proof, let $n$  be a variable positive integer. But in each of the following paragraphs, the integer $n$ is assumed to be fixed.

   Fix an $F^n$-stable maximal torus $S$ of $G$ in this paragraph.  We say that a regular Deligne-Lusztig character $\rho$ of $G^{F^n}$ corresponds to $S$  if $\langle \rho,R_{S,\chi}^{(n)}\rangle_{G^{F^n}}\neq 0$ for some character $\chi$ of $S^{F^n}$. We say that an irreducible representation $\pi$ of $G^{F^n}$ is relevant to $S$ if $\langle \pi,R_{S,\eta}^{(n)}\rangle_{G^{F^n}}\neq 0$ for some character $\eta$ of $S^{F^n}$.
   
   By Theorem \ref{dl-th6.8}, any regular Deligne-Lusztig character of $G^{F^n}$ corresponds to a unique $F^n$-stable maximal torus of $G$ up to $G^{F^n}$-conjugation.

  By Lemma \ref{dl-cor7.7},  it suffices to estimate the ratio 
    $$F_{n,T}:=
   \frac{ \#\{\text{regular Deligne-Lusztig characters of $G^{F^n}$ that correspond to $T$}\}}{\#\{\text{isomorphism classes of irreducible representations of $G^{F^n}$ that are relavant to $T$}\}}
   $$
   for each $F^n$-stable maximal torus $T$ of $G$,
since we clearly have
\begin{equation}\label{ineq-pdl}
    \mathrm P_{DL}(n)\geq \min\limits_{(T),n}\{F_{n,T}\},
\end{equation}
where the subscript $(T),n$ means that $T$ ranges over the $G^{F^n}$-conjugacy classes of $F^n$-stable maximal tori of $G$.

   In the remainder of this proof, for any $F^n$-stable maximal torus $T$ of $G$, we view a character of $T^{F^n}$ as an element in $(T^*)^{F^n}$, where $T^*$ is the dual torus of $T$. (See \cite[(5.21.5)]{DL}.)
Let $^wT^*$ denote the fixed-point subscheme of $T^*$ under $w\in W(T)$, endowed with the reduced scheme structure.

In the remainder of this proof, let $(\mathrm B,\mathrm T)$ be an $F$-stable Borel pair  of $G$. Let $X_{\mathrm T^*}$ be the minimal reduced closed subscheme of $\mathrm T^*$ satisfying:
\begin{itemize}
    \item $X_{\mathrm T^*}$ contains $^v \mathrm T^*$ for every \textbf{non-trivial} $v\in W(\mathrm T)$;
    \item $X_{\mathrm T^*}$ is stable under the geometric Frobenius $F$.
\end{itemize}
Then every irreducible component of $X_{\mathrm T^*}$ is a  translate of a proper subtorus of $\mathrm T^*$.

For any $F^n$-stable maximal torus $T$ of $G$, we can choose $g_T\in G(\mathrm k)$ such that $\mathrm Tg_T=g_T T$. Set $c_T:T\to\mathrm T$ to be the isomorphism defined by $t\mapsto  g_Ttg^{-1}_T$. Let $c^\mathrm t_T:\mathrm T^*\to T^*$ be the transpose map of $c_T$. In view of the explicit relation between  Frobenius structures on $T$ and $\mathrm T$ (see \cite[Corollary 1.14]{DL} and what follows), we see that $c^\mathrm t_T(X_{\mathrm T^*})$ contains $^w T^*$ for every nontrivial $w\in W(T)$, and that $c^\mathrm t_T(X_{\mathrm T^*})$ is $F^n$-stable.
Consequently, by \cite[Lemma 8.2]{L5}, we have\begin{equation}\label{ineq-regularchara}
    \begin{aligned}
    &\#\{\chi\in (T^*)^{F^n}:\text{$\chi$ is regular as a character of $T^{F^n}$}\}\\
\geq &\#\left((T^*)^{F^n}-(c^\mathrm {t}_T(X_{\mathrm T^*}))^{F^n}\right)\\ \geq &(1-\mathbf N(\mathrm T^*,X_{\mathrm T^*})\frac{(q^n+1)^{\dim(T)-1}}{(q^n-1)^{\dim (T)}})\#((T^*)^{F^n}). 
\end{aligned}
\end{equation}
(See Remark \ref{rm-numberNTX} for the definition of $\mathbf N(\_,\_)$, and note that $\mathbf N(\mathrm T^*,X_{\mathrm T^*})=\mathbf N(T^*,c^\mathrm t_{T}(X_{\mathrm T^*}))$ by the definition.)
By  Theorem \ref{dl-th6.8}, we see that
$$
F_{n,T}\geq \frac{\#\left((T^*)^{F^n}-(c^\mathrm {t}_T(X_{\mathrm T^*}))^{F^n}\right)}{\#(T^*)^{F^n}+\#\left(W(T)\right)\#\left((c^\mathrm {t}_T(X_{\mathrm T^*}))^{F^n})\right)}.
$$
By the inequalities (\ref{ineq-pdl}) and (\ref{ineq-regularchara}),  we see that $$ 1\geq \limsup_{m\to \infty} \mathrm P_{DL}(m)\geq\liminf_{m\to \infty} \mathrm P_{DL}(m)\geq
\liminf_{m\to\infty} \min_{(T),m}\{ F_{m,T}\}=1,
$$
as desired.
\end{proof}

\begin{cor}\label{cor-regularprincipal}
    We have the following inequality
    $$
    \liminf\limits_{m\to \infty} \mathrm P^\circ_{DL}(m)\geq \frac{1}{\#\mathrm W},
    $$
    where $\mathrm W$ denotes the Weyl group of $G$.
\end{cor}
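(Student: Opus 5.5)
The plan is to compare the numerator of $\mathrm P^\circ_{DL}(n)$ with the total number of regular Deligne--Lusztig characters. By Proposition \ref{pro-dl-vs-irrerep}, that total is asymptotically the number of irreducible representations. Both quantities will be estimated through the characters of the relevant tori.

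\textbf{Step 1 (numerator).} Fix the $F$-stable Borel pair $(\mathrm B,\mathrm T)$. By Lemma \ref{l-paraind}, $\mathrm{Ind}_{\mathrm B^{F^n}}^{G^{F^n}}\chi=R^{(n)}_{\mathrm T,\chi}$. By Theorem \ref{dl-th6.8}, for regular $\chi,\chi'$ we have $R^{(n)}_{\mathrm T,\chi}=R^{(n)}_{\mathrm T,\chi'}$ if and only if $\chi'\in W(\mathrm T)^{F^n}\cdot\chi$. Indeed, both are irreducible up to sign by Proposition \ref{regulardl-irr}, and the inner product is nonzero exactly in the conjugate case. Regularity means these orbits are free. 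Hence the numerator equals
\[
\frac{\#\{\text{regular }\chi\in(\mathrm T^*)^{F^n}\}}{\#W(\mathrm T)^{F^n}}\;\geq\;\frac{\#\{\text{regular }\chi\}}{\#\mathrm W}.
\]
By inequality (\ref{ineq-regularchara}), this is at least $(1-o(1))\,\#\mathrm T^{F^n}/\#\mathrm W$. Since $\mathrm T$ sits in an $F$-stable Borel subgroup, $\#\mathrm T^{F^n}\geq (q^n-1)^r$ with $r=\dim\mathrm T$.

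\textbf{Step 2 (upper bound for the denominator).} Write $\#\mathcal E(G,n)=\#\{\text{regular DL characters of }G^{F^n}\}/\mathrm P_{DL}(n)$. The factor $1/\mathrm P_{DL}(n)$ is harmless by Proposition \ref{pro-dl-vs-irrerep}, so it suffices to bound the number of regular Deligne--Lusztig characters.

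As in the proof of Proposition \ref{pro-dl-vs-irrerep}, each such character corresponds to a unique $G^{F^n}$-class of $F^n$-stable maximal tori $T$. Within that class it is counted by a free $W(T)^{F^n}$-orbit of regular characters. The number attached to $T$ is therefore at most $\#T^{F^n}/\#W(T)^{F^n}$.

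The classes of tori are parametrized by $F^n$-twisted conjugacy classes $[w]$ in $\mathrm W$. Under this parametrization $W(T_w)^{F^n}$ is the twisted centralizer $C_{F^n}(w)$, and $\#T_w^{F^n}=|\det(q^n w^{-1}\phi-1)|\le (q^n+1)^r$ on the cocharacter lattice, where $\phi$ is the linear map by which $F$ acts on the cocharacter lattice, so that $F^n$ acts by $q^n\phi^n$. Orbit--stabilizer for the twisted conjugation action gives $\sum_{[w]}1/\#C_{F^n}(w)=1$. Hence the total number of regular Deligne--Lusztig characters is at most $(q^n+1)^r$.

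\textbf{Step 3 (conclusion).} Combining the two steps,
\[
\mathrm P^\circ_{DL}(n)\ \ge\ \mathrm P_{DL}(n)\,(1-o(1))\frac{(q^n-1)^r}{\#\mathrm W\,(q^n+1)^r}\ \longrightarrow\ \frac{1}{\#\mathrm W}.
\]
This gives the claimed $\liminf$.

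The main obstacle is Step 2. It requires the twisted-class parametrization of $F^n$-stable tori, together with the identification of $W(T_w)^{F^n}$ with the twisted centralizer, uniformly in $n$. I would take this standard input from \cite[1.14--1.17]{DL}. I expect the remaining bounds to be routine.
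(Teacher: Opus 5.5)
Your proposal is correct and follows the paper's route: the paper likewise uses Proposition \ref{pro-dl-vs-irrerep} to reduce to the share of regular Deligne--Lusztig characters attached to $\mathrm T$ (its ratio $I_{n,\mathrm T}$), then concludes from the classification of $F^n$-stable maximal tori \cite[Corollary 1.14]{DL} and the estimate (\ref{ineq-regularchara}), and your Steps 1--3 supply the details it leaves implicit. One minor typo that does not affect the bound $(q^n+1)^r$: in your formula for $\#T_w^{F^n}$ the twist should be $\phi^n$, not $\phi$.
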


\begin{proof} In this proof, we retain the convention adopted in the proof of Proposition \ref{pro-dl-vs-irrerep}.

    We fix an $F$-stable Borel pair $(\mathrm B,\mathrm T)$ of $G$, as in  Definition \ref{def-pdlcirc}. 
    Let $I_{n,\mathrm T}$ denote the ratio
    $$
 \frac{\#\{\text{regular Deligne-Lusztig characters of $G^{F^n}$ that correspond to $\mathrm T$}\}}{\#\{\text{regular Deligne-Lusztig characters of $G^{F^n}$ }\}}.
    $$
By Proposition \ref{pro-dl-vs-irrerep}, it suffices to show that
$$
\liminf_{m\to \infty} I_{m,\mathrm T}\geq \frac{1}{\#\mathrm W}.
$$
The corollary follows from the classification of $F^n$-stable maximal tori \cite[Corollary 1.14]{DL} and the estimation (\ref{ineq-regularchara}) of the number of regular characters. 

\end{proof}

\section{Formulations of the Almost Multiplicity-one Property }\label{sec-formulation-efficient}
Recall the almost multiplicity-one property defined in Definition \ref{def-almostmulone}. In this section, we deduce a necessary condition for this property. 

In this section, let $T$ be an $F$-stable maximal torus of $G$ contained in an $F$-stable Borel subgroup $B$ of $G$, and let $\mathrm d:B\to T$ be the natural quotient map.
\subsection{Principal-series representations}

In this subsection, let $n$ be a postive integer.

We recall the following definition for later use.

\begin{defn}
We define  $$\mathcal P(G,n):=\{\mathrm{Ind}_{B^{F^n}}^{G^{F^n}}(\chi): \text{$\chi$ is a character of $T^{F^n}$} \}$$  and refer to elements in $\mathcal P(G,n)$ as {\bf{principal-series}} representations of $G^{F^n}$.
If in addition the character $\chi:T^{F^n}\to \Qlb^\times$ is regular in the sense of Definition \ref{def-regularch},  we call  $\mathrm {Ind}_{B^{F^n}}^{G^{F^n}}(\chi)$  a regular principal-series representation. 
(In the definition of $\mathcal P(G,n)$, we view characters of $T^{F^n}$ as representations of $B^{F^n}$ via the natural quotient $B^{F^n}\to T^{F^n}$.)
\end{defn}

The following is a variant of the almost multiplicity-one property regarding principal series representations.

\begin{defn}\label{def-almostmulone-principal}Let $\mathcal P^\diamondsuit(G,H,n)$ denote the set of (equivalence classes of) regular principal-series representations $\pi$ of $G^{F^n}$ satisfying 
    $$
    \langle\pi,1_{H^{F^n}}\rangle_{H^{F^n}}=1.$$
    We say that the almost multiplicity-one property holds for $H$ with respect to principal-series representations if 
    $$
    \lim_{n\to \infty} \frac{\#\mathcal P^{\diamondsuit}(G,H,n)}{\#\mathcal P(G,n)}=1.
    $$
\end{defn}

We have the following remark concerning double coset spaces.

\begin{rmk}\label{rm-rationorbit-vs-geoorbit}
    There is a natural map $\Phi_q$ from $B^{F}\backslash G^F/H^F $ to $\left(B(\mathrm k)\backslash G(\mathrm k)/H(\mathrm k)\right)^F$, where $\left(B(\mathrm k)\backslash G(\mathrm k)/H(\mathrm k)\right)^F$ denotes the fixed-point set of $B(\mathrm k)\backslash G(\mathrm k)/H(\mathrm k)$ under the natural action of the Frobenius $F$.  The map $\Phi_q$ is surjective by Lang's theorem.
    Fix $w\in B^{F}\backslash G^F/H^F$ and let $\bar w=\Phi_q(w)$.
     The fibre $\Phi_q^{-1}(\{\bar w\})$ is in bijection with the set of $F$-conjugacy classes in $\pi_0(B\cap wHw^{-1})$. (See \cite[Proposition 1.16]{DL}.) Here, let $\pi_0(\_)$ denote the component group.     
\end{rmk}

For future use, let $\Phi_{q^n}$ denote the natural map  from $B^{F^n}\backslash G^{F^n}/H^{F^n} $ to $\left(B(\mathrm k)\backslash G(\mathrm k)/H(\mathrm k)\right)^{F^n}$. Note that the similar statements to those in Remark \ref{rm-rationorbit-vs-geoorbit} hold for the map $\Phi_{q^n}$ when the geometric Frobenius in question is $F^n$.

\subsection{A necessary condition} In this subsection, we formulate a necessary condition for the almost multiplicity-one property.

We need the following lemma.
\begin{lem}\label{lem-imageFpoints}
    Let $f_0:X_0\to Y_0$ be a morphism of algebraic groups over $\mathbb F_q$, with the kernel $Z_0$. Let $f:X\to Y$ denote the pullback of $f_0$ to $\mathrm k$. Let $Z$ be the pullback of $Z_0$ to $\mathrm k$. Let $\mathrm {I}_f$ be the image of $X^F$ under $f$, where we view $\mathrm {I}_f$ as a subgroup of $Y^F$. Suppose that $f$ is surjective. Then we have $\#\mathrm{I}_f\geq \#Y^F/K_f$, where $K_f$ is the cardinality of the  set of components of $Z$.
\end{lem}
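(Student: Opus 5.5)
The plan is to use Lang's theorem applied to $Z$ and a Galois-cohomology style exact sequence. Since $f$ is surjective with kernel $Z$ (taken as the scheme-theoretic kernel, or its reduced subgroup; only the points matter), we have a short exact sequence of $F$-equivariant groups
$$1\to Z(\mathrm k)\to X(\mathrm k)\xrightarrow{f} Y(\mathrm k)\to 1$$
on $\mathrm k$-points; surjectivity on $\mathrm k$-points holds because $\mathrm k$ is algebraically closed. The standard non-abelian cohomology argument then gives an exact sequence of pointed sets
$$1\to Z^F\to X^F\to Y^F\xrightarrow{\delta} H^1(F,Z),$$
where $H^1(F,Z)$ denotes the set of $F$-conjugacy classes in $Z(\mathrm k)$ (i.e.\ $z\sim z'$ iff $z'=u^{-1}zF(u)$). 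Concretely, for $y\in Y^F$ I pick $x\in X(\mathrm k)$ with $f(x)=y$ and set $\delta(y)=[x^{-1}F(x)]$. This element lies in $Z(\mathrm k)$ because $f(x^{-1}F(x))=y^{-1}F(y)=e$, and its class is independent of the choice of $x$.

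The key steps, in order, are as follows.

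First, check that two elements $y_1,y_2\in Y^F$ satisfy $\delta(y_1)=\delta(y_2)$ if and only if $y_1\in \mathrm I_f\, y_2$ (more precisely $y_2^{-1}y_1$, or $y_1y_2^{-1}$, lies in $\mathrm I_f$, depending on side conventions). This means the fibres of $\delta$ are exactly the cosets of $\mathrm I_f$ in $Y^F$. It is a direct computation: if $x_1^{-1}F(x_1)=u^{-1}x_2^{-1}F(x_2)F(u)$ with $u\in Z$, then $x_2u x_1^{-1}$ is $F$-fixed and maps to $y_2y_1^{-1}$.

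Second, bound $\#H^1(F,Z)$ by invoking \cite[Proposition 1.16]{DL}, or equivalently Lang's theorem applied to the identity component $Z^\circ$. The $F$-conjugacy classes of $Z$ are in bijection with the $F$-conjugacy classes of the finite group $\pi_0(Z)=Z/Z^\circ$, because Lang's theorem makes $H^1(F,Z^\circ)$ trivial and this holds after twisting. The number of $F$-conjugacy classes of $\pi_0(Z)$ is at most $\#\pi_0(Z)=K_f$.

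Combining the two steps, $Y^F$ is a union of at most $K_f$ cosets of $\mathrm I_f$, so $\#Y^F\le K_f\,\#\mathrm I_f$, which is the claim.

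The main obstacle I expect is the second step: making the identification of $H^1(F,Z)$ with $F$-classes in $\pi_0(Z)$ rigorous when $Z$ is possibly non-reduced or disconnected. I would handle this by passing to $Z_{\mathrm{red}}$, which has the same $\mathrm k$-points, and then citing \cite[Proposition 1.16]{DL} directly, as the paper already does in Remark \ref{rm-rationorbit-vs-geoorbit}. Actually only the inequality $\#H^1(F,Z)\le K_f$ is needed. For this, note that the surjection $Z\to\pi_0(Z)$ induces an injection $H^1(F,Z)\hookrightarrow H^1(F,\pi_0(Z))$; injectivity follows from Lang's theorem on the twisted forms of $Z^\circ$, which are connected.
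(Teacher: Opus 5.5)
Your proposal is correct and follows the paper's route. The paper also uses the obstruction map $Y^F\to H^1(F,Z)$, whose fibres are the cosets of $\mathrm I_f$, and Lang's theorem to bound $\#H^1(F,Z)$ by $\#H^1(F,\pi_0(Z))\le \#\pi_0(Z)=K_f$; you just write out the coset computation and the (injective) comparison $H^1(F,Z)\to H^1(F,\pi_0(Z))$ that the paper states in one line.
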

\begin{proof} 
The obstructions to the surjectivity of $X^F\to Y^F$ live in the set $H^1(F,Z)$ of $F$-conjugacy classes in $Z$. By Lang's theorem, we have $H^1(F,Z)\cong H^1(F,\pi_0(Z))$, where $\pi_0(Z)$ denotes the component group of $Z$. It is clear that $\#Y^F/\#I_f\leq \#H^{1}(F,\pi_0(Z))\leq \#\pi_0(Z)=K_f$. This completes the proof.
\end{proof}

We can now state the main results of this section.

\begin{prop}\label{pro-principal-efficiency}
    Suppose that the almost multiplicity-one property holds for $H$ with respect to principal-series representations. Then:
    \begin{itemize}
            \item[i)] There exists a unique $w\in B(\mathrm k)\backslash G(\mathrm k)/H(\mathrm k)$ such that $\mathrm d(B\cap w Hw^{-1})$ is a finite algebraic group: that is, the scheme $\mathrm d(B\cap w Hw^{-1})$ is finite over $\mathrm k$.
            \item[ii)] Let $w\in B(\mathrm k)\backslash G(\mathrm k)/H(\mathrm k)$ be the element specified in i). The group $B\cap w Hw^{-1}$ endowed with the reduced scheme structure is a connected unipotent algebraic group. 
            \end{itemize}   
\end{prop}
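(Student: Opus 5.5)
We compute the multiplicity $\langle \mathrm{Ind}_{B^{F^n}}^{G^{F^n}}\chi, 1_{H^{F^n}}\rangle_{H^{F^n}}$ with the Mackey formula, sum it over characters $\chi$ of $T^{F^n}$, and compare the total with the number of regular principal series. By Frobenius reciprocity and Mackey,
$$
\langle \mathrm{Ind}_{B^{F^n}}^{G^{F^n}}\chi,1_{H^{F^n}}\rangle_{H^{F^n}}=\sum_{x\in B^{F^n}\backslash G^{F^n}/H^{F^n}} \langle \chi\circ \mathrm d, 1\rangle_{B^{F^n}\cap xH^{F^n}x^{-1}} .
$$
The summand for $x$ equals $1$ exactly when $\chi$ is trivial on the subgroup $D_x^{(n)}:=\mathrm d(B^{F^n}\cap xH^{F^n}x^{-1})\subset T^{F^n}$, and is $0$ otherwise. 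Summing over all $\chi$ therefore gives
$$
\sum_\chi \langle \mathrm{Ind}\chi,1\rangle=\sum_x \#T^{F^n}/\#D_x^{(n)}.
$$
The set of characters with multiplicity $\ge 1$ is the union over $x$ of the annihilators of $D_x^{(n)}$.

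The key steps are as follows.

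\textbf{Step 1: Orbits with finite torus image.} Since $G/H$ is spherical, $B\backslash G/H$ is finite, so the number of $x$ is bounded independently of $n$, using Remark \ref{rm-rationorbit-vs-geoorbit} and the uniform bound $\#\pi_0\le$ const. Let $w$ be a geometric double coset and $C_w:=\mathrm d(B\cap wHw^{-1})$. Lemma \ref{lem-imageFpoints}, applied to $B\cap wHw^{-1}\to C_w$, gives $\#D_x^{(n)}\ge \#C_w^{F^n}/K$.
\begin{itemize}
\item If $\dim C_w>0$, the annihilator of $D_x^{(n)}$ has size $O(\#T^{F^n}/q^n)$, a negligible proportion.
\item If $C_w$ is finite, the annihilator has proportion at least $1/\#C_w\ge$ const.
\end{itemize}

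\textbf{Step 2: Existence in i).} Suppose every $w$ had $\dim C_w>0$. Then the characters with nonzero multiplicity would form a vanishing proportion of the regular characters. By Corollary \ref{cor-regularprincipal} and the fact that a regular principal series is determined by $\chi$ up to $W^{F^n}$ (bounded multiplicity), this contradicts the hypothesis that $\#\mathcal P^\diamondsuit/\#\mathcal P\to 1$.

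\textbf{Step 3: Uniqueness in i).} Suppose two geometric cosets $w_1\neq w_2$ had finite $C_{w_i}$, and take $n$ divisible enough that both are $F^n$-fixed. The $\chi$ trivial on both $D^{(n)}_{x_1}$ and $D^{(n)}_{x_2}$ form a positive proportion, at least $1/(\#C_{w_1}\#C_{w_2})$ up to the bounded constant. Such $\chi$ have multiplicity $\ge 2$, again a positive proportion of regular characters, contradicting the hypothesis. Here I must control the limit along the subsequence only, which suffices since the hypothesis is a full limit.

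\textbf{Step 4: Assertion ii).} For the unique $w$, the group $C_w$ is finite, so $(B\cap wHw^{-1})^\circ$ maps trivially to $T$ and is therefore unipotent. It remains to show $B\cap wHw^{-1}$ is connected and that $C_w$ is trivial. The hypothesis says almost all $\chi$ must have multiplicity exactly $1$, hence almost all must be trivial on some $D_x^{(n)}$. Since orbits with $\dim C>0$ contribute negligibly, almost all $\chi$ must be trivial on $D_x^{(n)}$ for $x$ in the fibre $\Phi_{q^n}^{-1}(w)$.

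Two things are needed here. First, the fibre must have exactly one element; otherwise, when the $F^n$-conjugacy classes of $\pi_0$ number at least $2$, each contributes and we get multiplicity $\ge 2$ with positive proportion. Second, $D_x^{(n)}$ must be trivial. If $\pi_0(B\cap wHw^{-1})$ is nontrivial, then for $n$ with $F^n$ acting trivially on $\pi_0$ (a subsequence) the number of $F^n$-classes equals the number of conjugacy classes, which is at least $2$. This gives the contradiction, so $B\cap wHw^{-1}$ is connected.

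Connected unipotent groups have trivial image in $T$, so $C_w$ is trivial, consistent with the requirement. Indeed, if $D$ were nontrivial, the proportion of $\chi$ with positive multiplicity would be at most $1/\#D<1$, contradicting the hypothesis.

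\textbf{Main obstacle.} The main obstacle is the bookkeeping in Steps 3–4. One must pass to subsequences of $n$ on which the relevant cosets and components are $F^n$-fixed, and compare counts of characters with counts of regular principal-series classes, modulo the bounded $W^{F^n}$-action and the negligible nonregular characters from estimate (\ref{ineq-regularchara}).
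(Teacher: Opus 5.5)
Your proposal is correct and follows essentially the same route as the paper. Both use the Mackey formula together with Lemma \ref{lem-imageFpoints} to count characters coset by coset, a dimension count for existence in i), the finite subgroup generated by the two finite images for uniqueness, and the fact that $\Phi_{q^n}^{-1}(w)$ is not a singleton (once $F^n$ acts trivially on a nontrivial $\pi_0(B\cap wHw^{-1})$) to force connectedness, with unipotence then following from the finiteness of $\mathrm d(B\cap wHw^{-1})$.

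The appeal to Corollary \ref{cor-regularprincipal} in Step 2 is superfluous. There you only need the lower bound $\#\mathcal P(G,n)\ge \#T^{F^n}/\#\mathrm W$ and the fact that non-regular characters form a negligible proportion.
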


\begin{proof}
    The proof is essentially an application of the classical Mackey formula, as can be seen as follows. In this proof,  for positive integers $n$, we systematically view characters of $T^{F^n}$ as representations of $B^{F^n}$ via the natural quotient $B^{F^n}\to T^{F^n}$.

    Fix any $v\in B(\mathrm k)\backslash G(\mathrm k)/H(\mathrm k)$, then $v$ has a representative $v_0$  in $G^{F^N}$ for some integer $N$.  The Mackey formula claims that for any positive integer $n$ and any  character $\chi:T^{F^n}\to \Qlb^\times$, we have 
    $$
    \langle R_{T,\chi}^{(n)},1_{H^{F^n}}\rangle_{H^{F^n}}=
    \langle \mathrm{Ind}_{B^{F^n}}^{G^{F^n}}(\chi),1_{H^{F^n}}\rangle_{H^{F^n}}=
    \sum\limits_{w\in B^{F^n}\backslash G^{F^n}/H^{F^n}}\langle \chi_{|B^{F^n}\cap w H^{F^n}w^{-1}},1_{B^{F^n}\cap w H^{F^n}w^{-1}}\rangle_{B^{F^n}\cap w H^{F^n}w^{-1}}.
    $$
    Set temporarily $S_v:=\mathrm d(B\cap v_0Hv^{-1}_0)$. Let $C_v$ be the component group of the kernel of the quotient map $$B\cap v_0Hv^{-1}_0\rightarrow S_v.$$ For integers $n$ that are divisible by $N$,  Lemma \ref{lem-imageFpoints} shows that there are exactly $K_n^{v_0}$  many characters $\chi:T^{F^n}\to \Qlb^\times $  with  $$ \langle \chi_{|B^{F^n}\cap v_0 H^{F^n}v^{-1}_0},1_{B^{F^n}\cap v_0 H^{F^n}v^{-1}_0}\rangle_{B^{F^n}\cap v_0 H^{F^n}v^{-1}_0}=1,$$ for a specific integer $K_n^{v_0}$ satisfying $\frac{\#T^{F^n}}{\#S_v^{F^n}} \leq K_n^{v_0}\leq \frac{\#T^{F^n}\cdot \#C_v}{\#S_v^{F^n}}$. (Here, we apply Lemma \ref{lem-imageFpoints} to the map $B\cap v_0 Hv_0^{-1}\to S_v$ induced by $\mathrm d$, and view it as a morphism defined over $\mathbb F_{q^n}$.)


Combining the above computation and  the almost multiplicity-one property for $H$ with respect to principal-series representations, we \textbf{claim}:
\begin{itemize}
    \item There exists a unique $w\in B(\mathrm k)\backslash G(\mathrm k)/H(\mathrm k)$ such that $\mathrm d(B\cap w Hw^{-1})$ is a finite scheme over $\mathrm k$.
\end{itemize}

Indeed, if for every $w\in B(\mathrm k)\backslash G(\mathrm k)/H(\mathrm k)$, the scheme $\mathrm d(B\cap w Hw^{-1})$ has positive dimension, then $
    \lim\limits_{n\to \infty} \frac{\#\mathcal P^{\diamondsuit}(G,H,n)}{\#\mathcal P(G,n)}=0
    $
by the above computation and Remark \ref{rm-rationorbit-vs-geoorbit}. 

If there are distinct $w_1,w_2\in B(\mathrm k)\backslash G(\mathrm k)/H(\mathrm k)$ such that $S_1:=\mathrm d(B\cap w_1 Hw^{-1}_1)$ and $S_2:=\mathrm d(B\cap w_2 Hw^{-1}_2)$ are both finite as schemes over $\mathrm k$, then the algebraic subgroup $S$ of $T$ generated by $S_1$ and $S_2$ is likewise finite. By the above computation, there exist at least $\frac{\#T^{F^n}}{\#S^{F^n}}$ many characters $\chi$ such that $\langle \mathrm{Ind}_{B^{F^n}}^{G^{F^n}}(\chi),1_{H^{F^n}}\rangle_{H^{F^n}}\geq 2$ for sufficiently divisible $n$. This violates the almost multiplicity-one property for $H$ with respect to principal-series representations. And we finish the proof of the above claim.

In what follows, we fix this unique $w$ specified in the above claim (and fix a representative $w_0$ in $G(\mathrm k)$ ). We set $B_w:=B\cap w_0Hw^{-1}_0$ and $S_w:=\mathrm d(B_w)$. It remains to show that $B_w$ is connected, or (equivalently) that $\pi_0(B_w)$ is trivial. By Remark \ref{rm-rationorbit-vs-geoorbit}, we see that for sufficiently divisible positive integers $n$, the set $\Phi_{q^n}^{-1}\left(\{w\}\right)$ is {\bf not} a singleton set if $\pi_0(B_w)$ is nontrivial. 

Suppose for the contrary that $\pi_0(B_w)$ is nontrivial. For integers $n$ as in the last paragraph, let $w_1,w_2\in B^{F^n}\backslash G^{F^n}/H^{F^n} $ be distinct with $\Phi_{q^n}(w_1)=\Phi_{q^n}(w_2)=w$. By Mackey formula, we have
\begin{align*}
&\langle \mathrm{Ind}_{B^{F^n}}^{G^{F^n}}(\chi),1_{H^{F^n}}\rangle_{H^{F^n}}\geq\\ &\langle \chi_{|B^{F^n}\cap w_1 H^{F^n}w_1^{-1}},1_{B^{F^n}\cap w_1 H^{F^n}w_1^{-1}}\rangle_{B^{F^n}\cap w_1 H^{F^n}w_1^{-1}}+\langle \chi_{|B^{F^n}\cap w_2 H^{F^n}w_2^{-1}},1_{B^{F^n}\cap w_2 H^{F^n}w_2^{-1}}\rangle_{B^{F^n}\cap w_2 H^{F^n}w_2^{-1}}
\end{align*}
for characters $\chi:T^{F^n}\to \Qlb^\times$. 
Hence there exist at least $\frac{\#T^{F^n}}{\#S_w^{F^n}}$ many characters $\chi$ satisfying $$\langle \mathrm{Ind}_{B^{F^n}}^{G^{F^n}}(\chi),1_{H^{F^n}}\rangle_{H^{F^n}}\geq 2,$$ which violates the almost multiplicity-one property with respect to principal-series representations.
\end{proof}

\begin{cor}\label{cor-efficient}
    Suppose that the almost multiplicity-one property holds for the pair $(G,H)$. Then:
    \begin{itemize}
            \item[i)] There exists a unique $w\in B(\mathrm k)\backslash G(\mathrm k)/H(\mathrm k)$ such that $\mathrm d(B\cap w Hw^{-1})$ is a finite algebraic group: that is, the scheme $\mathrm d(B\cap w Hw^{-1})$ is finite over $\mathrm k$;
            \item[ii)] Let $w\in B(\mathrm k)\backslash G(\mathrm k)/H(\mathrm k)$ be the element specified in $i)$. The group $B\cap w Hw^{-1}$ endowed with the reduced scheme structure is a connected unipotent algebraic group. 
            \end{itemize}
\end{cor}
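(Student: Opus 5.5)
The plan is to reduce Corollary \ref{cor-efficient} to Proposition \ref{pro-principal-efficiency}. It suffices to show that the almost multiplicity-one property for $(G,H)$ implies the same property with respect to principal-series representations. Once that is done, conditions i) and ii) follow verbatim from Proposition \ref{pro-principal-efficiency}.

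The first step identifies regular principal series as a positive proportion of all irreducibles. By Lemma \ref{l-paraind} and Proposition \ref{regulardl-irr}, for a regular character $\chi$ of $\mathrm T^{F^n}$, the representation $\mathrm{Ind}_{B^{F^n}}^{G^{F^n}}\chi$ is irreducible; the sign is $+1$ since $\mathrm T$ is contained in an $F$-stable Borel subgroup, so $\sigma(G)=\sigma(\mathrm T)$. By Theorem \ref{dl-th6.8}, two such inductions are isomorphic if and only if the characters are $\mathrm W^{F^n}$-conjugate. Corollary \ref{cor-regularprincipal} then gives that regular principal series make up a proportion $\geq 1/\#\mathrm W-o(1)$ of $\mathcal E(G,n)$.

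Next we compare with the denominator $\#\mathcal P(G,n)$. The set $\mathcal P(G,n)$ consists of the inductions up to isomorphism. Its size is at most $\#\mathrm T^{F^n}$ and at least the number of regular $\mathrm W^{F^n}$-orbits. By (\ref{ineq-regularchara}), almost all characters are regular, and regular orbits have size $\#\mathrm W^{F^n}$. Hence the regular elements of $\mathcal P(G,n)$ form a proportion tending to $1$ of $\mathcal P(G,n)$, and $\#\mathcal P(G,n)\asymp\#\mathcal E(G,n)$ with ratio bounded below by roughly $1/\#\mathrm W$.

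Now let $\mathcal E^{bad}(G,H,n)$ denote the irreducibles not in $\mathcal E^\diamondsuit(G,H,n)$. By hypothesis $\#\mathcal E^{bad}(G,H,n)/\#\mathcal E(G,n)\to 0$. The regular principal series not in $\mathcal P^\diamondsuit(G,H,n)$ are irreducible representations lying in $\mathcal E^{bad}(G,H,n)$. So their number is at most $\#\mathcal E^{bad}(G,H,n)$, which is $o(\#\mathcal E(G,n))=o(\#\mathcal P(G,n))$ by the comparison above. Combining this with the fact that nonregular elements of $\mathcal P(G,n)$ are negligible gives $\#\mathcal P^\diamondsuit(G,H,n)/\#\mathcal P(G,n)\to 1$.

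The main obstacle is the comparison between $\#\mathcal P(G,n)$ and $\#\mathcal E(G,n)$, specifically a lower bound of $\#\mathcal P(G,n)$ against $\#\mathcal E(G,n)$. Corollary \ref{cor-regularprincipal} is stated with the ratio involving induced representations from regular characters, which are exactly the regular elements of $\mathcal P(G,n)$. So a liminf bound $\geq 1/\#\mathrm W>0$ is what is needed, and it is available.

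Care is required in one place. $\mathcal P(G,n)$ is defined via the fixed pair $(B,T)$ and Frobenius $F^n$, whereas Proposition \ref{pro-principal-efficiency} uses the same $(B,T)$. The conclusion is stated for this $(B,T)$, which is $F$-stable. For an arbitrary Borel pair over $\mathrm k$ one conjugates, since the conditions are invariant under $G(\mathrm k)$-conjugation. In particular, the ratios are unchanged by choosing any $F$-stable pair.
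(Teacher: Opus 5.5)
Your proposal is correct and follows essentially the same route as the paper. The paper also deduces the principal-series version of the property from Corollary \ref{cor-regularprincipal} together with the fact that regular inductions make up asymptotically all of $\mathcal P(G,n)$, and then applies Proposition \ref{pro-principal-efficiency}; you simply write out the counting that the paper's one-line proof leaves implicit.
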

\begin{proof}
This follows from Corollary \ref{cor-regularprincipal}, Proposition \ref{pro-principal-efficiency} and the fact that
$$
   \lim_{n\to \infty} \frac{ \#\{\mathrm {Ind}_{B^{F^n}}^{G^{F^n}}\chi:\text{$\chi$ is regular} \}}{\#\mathcal P(G,n)}=1.
   $$

\end{proof}
 
\section{Cohomological Interpretation}\label{sec-cohomological-interpret}
We introduce the notation adopted throughout this section.
For a scheme $X_0$ over $\mathbb F_q$, denote by $X$  its pullback to $\mathrm k$. For Weil sheaves $\mathscr L$ on $X$ and positive integers $n$, let $K^n_{\mathscr L}$ denote the function $X^{F^n}\to \Qlb$ given by ``trace of Frobenius" (also known as the sheaf-function correspondence in Grothendieck's sense). More generally, for a complex $\mathscr K$ of Weil sheaves on $X$, let $K^n_{\mathscr K}$ denote the function $\sum_{i\in \mathbb Z}(-1)^iK^n_{\mathscr H^{i}(\mathscr K)}$. For a morphism $f:X\to Y$ of schemes, all functors $f^!,f_!,f_*,f^*$ are understood in the derived sense.

\subsection{Sheaves on tori}In this subsection, we recall some basic facts about $1$-rank tame local systems on a torus.

Here we introduce the notation. In this subsection, let $T_0$ be a torus over $\mathbb F_q$ and let $T$ be its pullback to $\mathrm k$. We define $(T^F)^\vee$ to  be the set of characters of $T^F$ with values in $\Qlb^\times$.

In the following remark, we recall \cite[1.3 and 1.6]{L5}. An equivalent formulation can be found in \cite[2.2]{Lucha}.
\begin{rmk}\label{rm-kummersheaf} 
    For a torus $S$ over $\mathrm k$,
    set $\mathfrak L(S)$ to be the isomorphism classes of local systems $\mathscr L$ such that $\rank (\mathscr L)=1$ and $\mathscr L^{\otimes n}\cong \Qlb$ for some integer $n\geq 1$, invertible in $\mathrm k$. We fix an isomorphism of abstract groups $\mathfrak L(\mathrm G_m)\cong \mathrm k^\times$ once and for all, where $\mathrm G_m$ is the $1$-dimensional split torus. This gives rise to an isomorphism $\lambda_S:\mathfrak{ L}(S)\xrightarrow{\sim} S^*(\mathrm k)$, where $S^*$ denotes the dual torus of $S$.  For a morphism $h:S'\to S''$ of tori, we have the following commutative diagram $$
    \xymatrix{
  &\mathfrak{L}(S '')\ar[r]^{\lambda_{S''}}\ar[d]&(S'')^*(\mathrm k)\ar[d]\\
    &\mathfrak{L}(S')\ar[r]^{\lambda_{S'}}& (S')^*(\mathrm k)
    }
    $$
    where the right vertical map is the transpose of $h$, and the left vertical map is given by $h^*$. When $S$ is defined over $\mathbb F_q$ with the geometric Frobenius $F$, then so is $S^*$ and  the map $\lambda_S$ defines a bijection $\mathfrak L(S)^F\xrightarrow{\sim} (S^*)^F=(S^F)^\vee$, where $\mathfrak{ L}(S)^F:=\{\mathscr L\in \mathfrak{L}(S):F^* \mathscr L\cong \mathscr L\}$.
\end{rmk}

In the following remark, we explicitly construct local systems that encode characters of the $F$-fixed point group of a torus.

\begin{rmk}\label{rm-torishf}
    We have a homomorphism $\mathrm l^T:T\to T$ defined by $t\mapsto F(t)\cdot t^{-1}$. 
    We verify that $\mathrm l^T$ is  finite \'etale and compatible with the geometric Frobenius endomorphism $F$. We have the decomposition of Weil sheaves $$
   \mathrm l^T_* \Qlb = \mathrm l^T_! \Qlb \cong \bigoplus_{\eta\in (T^F)^\vee} \mathscr L_\eta^T,
    $$
    where $\mathscr L_\eta^T$ is the $1$-rank local system on $T$ satisfying the following property: (We also denote $\mathscr L_\eta=\mathscr L_\eta^T$ if it causes no confusion.)
    \begin{itemize}
        \item For any positive integer $n$, the corresponding function $K_{\mathscr L _\eta}^n: T^{F^n}\to \Qlb$  is explicitly given by the assignment
        $$
        T^{F^n}\ni t\mapsto \eta \circ \mathrm N_T^n(t),
        $$
        where $\mathrm N_T^n:T^{F^n}\to T^F$ is the norm map given by $T^{F^n}\ni t\mapsto t\cdot F(t) \cdot \ldots \cdot F^{n-1}(t)$.
    \end{itemize}

 Note that each $\mathscr L_\eta$ is indeed a local system introduced in \cite[1.3 and 1.6]{L5} endowed with a ``natural normalization'' in the sense of \select{loc. cit.}. Consequently, we see that $\{\mathscr L_\eta:\eta\in (T^F)^\vee\}$ gives a set of representatives of $\mathfrak L(T)^F$ (introduced in Remark \ref{rm-kummersheaf}). (We have $(\mathscr L_\eta)^{\otimes (\#T^F)}\cong \Qlb$, and the integer $\#T^F$ is coprime to $p$, the radical of $q$.)
\end{rmk}

For future use, we record the following remark which is a straightforward variant of Remark \ref{rm-torishf}.

\begin{rmk}\label{rm-gentorshf}
    More generally, fix a positive integer $n$ and an $F^n$-stable  subtorus $S$ of $G$. We have a homomorphism $\mathrm l_n^S:S\to S$ defined by $s\mapsto F^{n}(s)\cdot s^{-1}$.  Similarly to Remark \ref{rm-torishf}, we have
    $$
    (\mathrm l_n^S)_* \Qlb\cong \bigoplus_{\eta \in (S^{F^n})^\vee}\mathscr L_{\eta}^S.
    $$
    Here, the  function corresponds to $\mathscr L_\eta^S$ is (for $N$ divisible by $n$)
    $$
    S^{F^N}\ni s\mapsto \eta\circ \mathrm{N}_S^{N,n}(s),
    $$
    where $\mathrm N_{S}^{N,n}:S^{F^N}\to S^{F^n}$ is the norm map.
\end{rmk}

The following lemma shows that the family of functions determines the corresponding local system.

\begin{lem}\label{lem-chebo-tori}
Fix $\eta\in (T^F)^\vee$ and a $1$-rank lisse Weil sheaf $\mathscr L$ on $T$. Suppose that for every positive integer $n$, the corresponding function $K_{\mathscr L}^n$ equals the function $K^n_{\mathscr L_\eta}$ introduced in Remark \ref{rm-torishf}, then we have $\mathscr L\cong \mathscr L_\eta$ as Weil sheaves.
\end{lem}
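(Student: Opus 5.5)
We reduce to a statement about continuous characters of the arithmetic fundamental group, via Chebotarev density. Write $\mathscr M:=\mathscr L\otimes \mathscr L_\eta^{\vee}$. This is a rank-one lisse Weil sheaf on $T$. By multiplicativity of the sheaf--function correspondence, its trace function is $K^n_{\mathscr M}=K^n_{\mathscr L}\cdot (K^n_{\mathscr L_\eta})^{-1}$. The values of $K^n_{\mathscr L_\eta}$ are roots of unity, hence nonzero, so $K^n_{\mathscr M}\equiv 1$ on $T^{F^n}$ for every $n\geq 1$. It therefore suffices to prove that a rank-one lisse Weil sheaf whose Frobenius traces at all closed points equal $1$ is isomorphic to the constant sheaf $\Qlb$. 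Tensoring back with $\mathscr L_\eta$ then gives $\mathscr L\cong \mathscr L_\eta$ as Weil sheaves.

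First, $\mathscr M$ corresponds to a continuous character $\rho:W(T_0,\bar t)\to \Qlb^\times$ of the Weil group of $T_0$. Here $W(T_0,\bar t)\subset \pi_1(T_0,\bar t)$ is the preimage of $\mathbb Z\subset \hat{\mathbb Z}=\Gal(\mathrm k/\mathbb F_q)$. This equivalence between lisse Weil sheaves and representations of the Weil group is standard (Deligne, Weil II, 1.1.12). Isomorphism classes of rank-one objects correspond exactly to characters, since conjugation does not change a character. The hypothesis says that $\rho(\mathrm{Frob}_x)=1$ for every closed point $x$ of $T_0$. Indeed, a closed point of degree $d$ gives a point of $T^{F^d}$, and $K^{d}_{\mathscr M}$ at that point is $\rho(\mathrm{Frob}_x)$.

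Second, we apply the Chebotarev density theorem: the Frobenius elements $\mathrm{Frob}_x$, together with their conjugates, are dense in $W(T_0,\bar t)$ for its natural topology. In this topology the geometric fundamental group $\pi_1(T,\bar t)$ is an open profinite subgroup, and each coset of it indexed by $m\in\mathbb Z$ is open. For $m\geq 1$ the density inside each such coset is the usual Chebotarev statement for the finite étale covers $\pi_1(T_0)\to$ finite quotients. Since $\rho$ is continuous and equals $1$ on a dense subset of every coset of positive degree, we get $\rho\equiv 1$ on the elements of degree $\geq 1$. Because $\rho$ is a homomorphism, it is then trivial on all of $W(T_0,\bar t)$: any element of degree $\leq 0$ is a quotient of two elements of positive degree. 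Hence $\mathscr M\cong \Qlb$ as Weil sheaves.

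The main obstacle is the continuity and density step. The Weil group is not profinite, so we cannot quote Chebotarev for $\pi_1$ verbatim; we must argue coset by coset as above. We must also make sure the continuity of $\rho$ is with respect to the $\ell$-adic topology on $\Qlb^\times$, so that "equal to $1$ on a dense set" forces triviality. For this we would use that the image of $\pi_1(T,\bar t)$ under $\rho$ lands in $\mathcal O_E^\times$ for some finite extension $E/\mathbb Q_\ell$. An alternative route avoids the Weil group: first show that $\mathscr M|_{T}$ is geometrically trivial, by restricting the trace identity to finite étale covers. Then $\mathscr M$ is the pullback of a rank-one Weil sheaf on $\Spec\mathbb F_q$, that is, a single scalar $\alpha$. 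The trace at any $\mathbb F_q$-point (for instance $e\in T^F$) equals $\alpha=1$, which again gives $\mathscr M\cong\Qlb$.
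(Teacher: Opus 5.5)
Your reduction to $\mathscr M=\mathscr L\otimes\mathscr L_\eta^\vee$ with $K^n_{\mathscr M}\equiv 1$ agrees with the paper. So does the last step of your alternative route: a geometrically constant rank-one Weil sheaf is determined by its trace at $e\in T^F$. The gap is the density step. It is false that Frobenius elements and their conjugates are dense in each positive-degree coset of $W(T_0,\bar t)$ for the Weil topology. Elements of degree exactly $m$ in that set come from the finitely many points of $T^{F^m}$, so they form finitely many conjugacy classes. By contrast, the images of that coset in finite quotients of $\pi_1(T_0,\bar t)$ are unboundedly large.

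Concretely, take $T_0=\mathbb G_m$ and the Artin--Schreier cover $y_i^p-y_i=x^{j_i}$ ($1\le i\le r$, distinct $j_i$ prime to $p$). It is geometrically connected with abelian group $(\mathbb Z/p)^r$, so the degree-one coset maps onto $(\mathbb Z/p)^r$. The degree-one Frobenii hit at most $q-1$ elements, and conjugation does not help since the group is abelian. Once $p^r>q-1$, there is a nonempty open subset of that coset containing no Frobenius conjugate.

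Chebotarev gives density only in the profinite group $\pi_1(T_0,\bar t)$. Pulled back to $W$, its neighbourhoods $wN\cap W$ mix all degrees $n\equiv m$ modulo some $c$. To use it you need $\rho$ to be continuous for that coarser topology, and your argument never establishes this.

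The repair is short. Take the base point over $e\in T^F$, so that $\phi=\mathrm{Frob}_e$ splits the degree map and $\rho(\phi)=K^1_{\mathscr M}(e)=1$. Then $\rho(W)=\rho(\pi_1(T,\bar e))$ is compact, and $\rho$ extends continuously to $\pi_1(T_0,\bar e)\cong\pi_1(T,\bar e)\rtimes\hat{\mathbb Z}$. The genuine Chebotarev theorem then forces the extension, hence $\rho$, to be trivial.

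Your alternative route has the same hole in a different place. ``Restricting the trace identity to finite \'etale covers'' is not an argument for geometric triviality of $\mathscr M$, and geometric triviality is the whole content of the lemma. The paper obtains it without Chebotarev, using weights. One has $\mathrm{Tr}((F^n)^*,H^\bullet_c(T,\mathscr M^\vee))=\#T^{F^n}$, and Deligne's bound $|\alpha|\le q^{i/2}$ on $H^i_c$ then forces $H^{2\dim T}_c(T,\mathscr M^\vee)\neq 0$. Poincar\'e duality turns this into a nonzero section of $\mathscr M$, i.e.\ an isomorphism $\mathscr L_\eta\cong\mathscr L$ over $\mathrm k$. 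This is then rigidified at $e$ exactly as in your last step.
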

\begin{proof}
    This should follow from an appropriate version of Chebotarev density theorem. In what follows we prove the current lemma, using Deligne's weight theory.

    Set $\mathscr L_0=\mathscr L^{-1}\otimes \mathscr L_{\eta}$. Then the corresponding function $K^n_{\mathscr L_0}: T^{F^n}\to \Qlb$ is the constant function with the value $1$. By Grothendieck trace formula, we have \begin{equation}\label{eq-lem-count}
        \mathrm {Tr}\left(({F^*})^n,H^\bullet_c(T,\mathscr L_0)\right)=|T^{F^n}|.
    \end{equation}

    It is clear that $H^{i}_c(T,\mathscr L_0)=0$ unless $0\leq i\leq 2\dim (T)$. Moreover, for eigenvalues $\alpha$ of $F^*$ on the  space $H^i_c(T,\mathscr L_0)$, we have $|\alpha|\leq q^{i/2}$ by Deligne's weight theory \cite[Th\'eor\`eme 3.3.1]{D}. Consequently, by Equation (\ref{eq-lem-count}) (let $n\to \infty$), we have $H^{2\dim (T)}_c(T,\mathscr L_0)\neq 0$. Hence we have $H^0(T,\mathscr L_0^{-1})\neq 0$ by Poincar\'e duality, giving rise to a nontrivial homomorphism (hence is an isomorphism) $\mathrm I :\mathscr L_\eta\to \mathscr L$. 
   Observe that $1=K_{\mathscr L}^1(e)={K}^1_{\mathscr L_\eta}(e)$, rigidifying the isomorphism $\mathrm I$ as an isomorphism of Weil sheaves. This completes the proof.
\end{proof}
For future use, we record the following lemma.
\begin{lem}\label{lem-vanis-tori}
    Suppose that $\eta^0\in (T^F)^\vee$ is nontrivial. Then  $H^i_c(T,\mathscr L_{\eta^0})=0$ for all integers $i$.
\end{lem}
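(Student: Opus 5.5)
The plan is to reduce to a single dimension of the torus via the Künneth formula. Nontriviality of $\eta^0$ will then force the relevant rank-one local system on some $\mathrm G_m$ factor to be a nontrivial Kummer sheaf, whose compactly supported cohomology vanishes.

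First I pass to $\mathrm k$ and forget the Weil structure, since vanishing of $H^i_c$ is a geometric statement. By Remark \ref{rm-torishf}, $\mathscr L_{\eta^0}$ is a tame rank-one local system lying in $\mathfrak L(T)$. By Remark \ref{rm-kummersheaf}, it corresponds under $\lambda_T$ to a point $\lambda_T(\mathscr L_{\eta^0})\in T^*(\mathrm k)$. Since $\eta^0\neq 1$ and the $\mathscr L_\eta$ form a set of representatives of $\mathfrak L(T)^F$ bijecting with $(T^F)^\vee$, this point is not the identity. In particular $\mathscr L_{\eta^0}\not\cong\Qlb$ as a geometric local system: otherwise the bijection $\mathfrak L(T)^F\cong (T^F)^\vee$ would send it to the trivial character.

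Next I choose an isomorphism $T\cong \mathrm G_m^r$ over $\mathrm k$. Applying the commutative diagram of Remark \ref{rm-kummersheaf} to the inclusions and projections of the factors, together with the group structure on $\mathfrak L$, I get
$$\mathscr L_{\eta^0}\cong \mathscr L_1\boxtimes\cdots\boxtimes\mathscr L_r,$$
where $\mathscr L_j\in\mathfrak L(\mathrm G_m)$ corresponds to the $j$-th coordinate of $\lambda_T(\mathscr L_{\eta^0})$ in $T^*(\mathrm k)\cong(\mathrm k^\times)^r$. Since that point is nontrivial, some $\mathscr L_j$ is a nontrivial Kummer local system on $\mathrm G_m$. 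The Künneth formula then gives
$$H^\bullet_c(T,\mathscr L_{\eta^0})\cong\bigotimes_j H^\bullet_c(\mathrm G_m,\mathscr L_j),$$
so it suffices to show $H^\bullet_c(\mathrm G_m,\mathscr L)=0$ for a nontrivial tame rank-one local system $\mathscr L$ on $\mathrm G_m$.

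For this last step, I argue degree by degree.
\begin{itemize}
\item $H^0_c=0$ because $\mathrm G_m$ is affine and $\mathscr L$ is lisse.
\item $H^2_c(\mathrm G_m,\mathscr L)$ is dual to $H^0(\mathrm G_m,\mathscr L^{-1})$, which is the space of $\pi_1$-invariants of a nontrivial character, hence zero.
\item The Euler characteristic of $\mathscr L$ equals $\mathrm{rank}(\mathscr L)\cdot\chi_c(\mathrm G_m)$ minus Swan conductors, by the Grothendieck–Ogg–Shafarevich formula. The Swan conductors vanish by tameness and $\chi_c(\mathrm G_m)=0$, so $H^1_c=0$ as well.
\end{itemize}

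The main obstacle I expect is the bookkeeping in the Künneth reduction: justifying carefully, via the functoriality in Remark \ref{rm-kummersheaf}, that the local system decomposes as an external tensor product compatible with the chosen splitting $T\cong\mathrm G_m^r$, and that nontriviality of $\eta^0$ really produces a nontrivial factor. The rest is standard.
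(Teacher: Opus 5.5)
Your proof is correct, but it takes a different route from the paper's.

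\textbf{Your route.} You split $T\cong\mathrm G_m^r$ and use the functoriality of $\lambda_T$ from Remark \ref{rm-kummersheaf} together with $\sum_j\iota_j\circ p_j=\mathrm{id}_T$. This writes $\mathscr L_{\eta^0}$ as $\mathscr L_1\boxtimes\cdots\boxtimes\mathscr L_r$ with some factor nontrivial. You then apply K\"unneth and kill $H^0_c$, $H^2_c$, $H^1_c$ on $\mathrm G_m$ in turn:
\begin{itemize}
\item $H^0_c$ by non-properness;
\item $H^2_c$ by Poincar\'e duality;
\item $H^1_c$ by Grothendieck--Ogg--Shafarevich with vanishing Swan conductors, which holds because the order of $\mathscr L_j$ is prime to $p$.
\end{itemize}
The bookkeeping you flagged is handled correctly.

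\textbf{The paper's route.} It uses a dimension count instead. The Lang map $\mathrm l^T:T\to T$ is finite and its source equals its target, so
\[
H^i_c(T,\Qlb)\cong H^i_c(T,\mathrm l^T_!\Qlb)\cong\bigoplus_{\eta\in(T^F)^\vee}H^i_c(T,\mathscr L_\eta).
\]
The $\eta=1$ summand is already $H^i_c(T,\Qlb)$, because $\mathscr L_1\cong\Qlb$. Comparing finite dimensions forces every other summand to vanish.

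\textbf{Comparison.} The paper's argument needs no splitting of $T$, no K\"unneth, and no duality or Euler-characteristic formula. It does not even need to know in advance that $\mathscr L_{\eta^0}$ is geometrically nonconstant; that falls out as a byproduct, since $H^{2\dim T}_c(T,\Qlb)\neq 0$. Your argument is the standard proof of the general vanishing in Remark \ref{rm-vanishin-tori}. It applies directly to any nonconstant element of $\mathfrak L(T)$, and its one-dimensional step works for any nontrivial tame rank-one local system on $\mathrm G_m$. The cost is heavier machinery and extra bookkeeping.
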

\begin{proof}
    In view of the decomposition $$
    \mathrm l^T_! \Qlb \cong \bigoplus_{\eta\in (T^F)^\vee} \mathscr L_\eta
    $$
    introduced in Remark \ref{rm-torishf}, we see that 
    \begin{equation}\label{eq-lem-vanis}
         H^i_c(T,\Qlb)\cong \bigoplus_{\eta\in (T^F)^\vee} H^i_c(T,\mathscr L_\eta)
    \end{equation}
   for all integers $i$.
   
    Note that for $\eta=1$ the trivial character, we have $\mathscr L_{1}\cong\Qlb$ the constant sheaf of rank $1$. We finish the proof by comparing dimensions of both sides of Equation (\ref{eq-lem-vanis}).
 \end{proof}

\begin{rmk}\label{rm-vanishin-tori}
Fix a non-constant $\mathscr L\in \mathfrak{L}(T)$.
    Indeed, we can show  $H^i_c(T,\mathscr L)=0$ for all integers $i$. See \cite[(1.11.1)]{Lucha}.
\end{rmk}

\subsection{Character sheaves and Deligne-Lusztig characters}\label{sec-chashandDL}
In this subsection we recall \cite{L5} and \cite{Lau}.

For a Borel pair $(B,T)$ with $F$-stable $T$, set $\mathrm d^B_T:B\to T$ to be the map witnessing $T$ as the reductive quotient and providing a section for the inclusion $T\hookrightarrow B$.
We introduce the following commutative diagram and fix the notation.
\begin{equation}\label{diag-charshf}
\xymatrix{
&T_{reg}\ar[d]_{j_T}&\wt{G}_{rss}\ar[l]_{\rho_{rss}}\ar[d]^{\wt j}\ar[r]^{\pi_{rss}}&G_{rss}\ar[d]_j\\
&T &\wt{G}\ar[l]_{\rho}\ar[r]^{\pi}&G
}    
\end{equation}
where \begin{itemize}
    \item $T_{reg}:=\{t\in T: (C_G(t))^\circ=T\}$;
    \item $G_{rss}=\bigcup_{h\in G}h T_{reg}h^{-1}$;
    \item $\wt G_{rss}:=\{(g,hT)\in G_{rss}\times G/T:h^{-1}gh\in T_{reg}\}$;
    \item $\wt G:=\{(g,hB)\in G\times G/B: h^{-1}gh\in B\}$;
    \item $\rho_{rss}$ sends $(g,hT)\in \wt G_{rss}$ to $h^{-1}gh$;
    \item $\pi_{rss}$ sends $(g, hT)\in \wt G_{rss}$ to $g$;
    \item $\rho$ sends $(g,hB)\in \wt G$ to $\mathrm d^B_T(hgh^{-1})$;
    \item $\pi$ sends $(g,hB)\in \wt G$ to $g$;
    \item $\wt j$ sends $(g,hT)\in \wt G_{rss}$ to $(g,hB)\in \wt G$;
    \item $j_T$ and $j$ are natural inclusions.
    \end{itemize}

\begin{defn}\label{def-scRTchi}
    For a character $\eta:T^F\to \Qlb^\times$, recall the sheaf $\mathscr L_\eta$ introduced in Remark \ref{rm-torishf}. We set $$\mathscr R_{T,\eta}:= \left(j_{!*}(\pi_{rss})_!\rho^*_{rss} j_{T}^{*}\mathscr L_\eta [\dim G]\right)[-\dim G],$$ where $j_{!*}$ is the intermediate extension along $j$. 
\end{defn}
We note that the sheaf $\mathscr L_\eta$ and the upper row of the Diagram (\ref{diag-charshf}) is indeed defined over $\mathbb F_q$. Hence $\mathscr R_{T,\eta}$ has a Weil structure $\phi_{T,\eta}: F^* \mathscr R_{T,\eta}\xrightarrow{\sim} \mathscr R_{T,\eta}$ arising from the intermediate extension $j_{!*}$.

\begin{prop}\label{prop-charactersheaf-main}
    The following statements hold:
    \begin{itemize}
        \item [(i)] Suppose that the hypotheses of \cite[Proposition 8.15]{L5} hold for the pair $(G,T)$. Then the function $K^n_{\mathscr R_{T,\eta}}$ corresponding to $\mathscr R_{T,\eta}$ is the Deligne-Lusztig character $R_{T,\eta\circ \mathrm N_T^n}^{(n)}:G^{F^n}\to \Qlb$ (see the sentence before Definition \ref{de-regratio} for the notation), where $\mathrm N_T^n:T^{F^n}\to T^F$ is the norm map;
        \item [(ii)] There is a canonical isomorphism $\mathscr R_{T,\eta}\cong \pi_! \rho^* \mathscr L_\eta$ of $\Qlb$-sheaves.
    \end{itemize}
\end{prop}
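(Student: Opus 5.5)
Both statements are essentially citations of Lusztig's theory of character sheaves, so the plan is to reduce each to the corresponding result in \cite{L5} (and \cite{Lau}), checking only that the normalizations agree.

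\textbf{Part (ii).} I would first show that $\pi_!\rho^*\mathscr L_\eta$ is, up to the shift $[\dim G]$, a perverse sheaf equal to the intermediate extension of its restriction to $G_{rss}$. The map $\pi:\wt G\to G$ is proper, since $\wt G$ is closed in $G\times G/B$ and $G/B$ is proper. The variety $\wt G$ is smooth of dimension $\dim G$, being a vector-bundle-like fibration over $G/B$ with fibre $B$. Hence $\rho^*\mathscr L_\eta[\dim G]$ is a local system shifted to be perverse, and pure when we are over $\mathbb F_q$.

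Next, $\pi$ is small in Lusztig's sense, as in the Grothendieck--Springer resolution. Concretely, the stratification of $G$ by the dimension of the centralizer of the semisimple part gives fibres of dimension strictly less than half the codimension of each non-open stratum. By the decomposition theorem, or directly by smallness (Goresky--MacPherson), $\pi_!\rho^*\mathscr L_\eta[\dim G]$ is perverse and equals $j_{!*}$ of its restriction to $G_{rss}$.

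Over $G_{rss}$, the fibre of $\pi$ over $g$ is the finite set of Borels containing $g$. These are in bijection with the $W$-torsor of Borel subgroups containing the torus $C_G(g)^\circ$. This identifies $\pi^{-1}(G_{rss})$ with $\wt G_{rss}$ via $\wt j$, compatibly with $\rho_{rss}$ and $j_T$: on $\wt G_{rss}$ the element $h^{-1}gh$ already lies in $T$, so $\mathrm d^B_T$ acts as the identity. Base change then gives $j^*\pi_!\rho^*\mathscr L_\eta\cong(\pi_{rss})_!\rho_{rss}^*j_T^*\mathscr L_\eta$. Applying $j_{!*}$ and shifting yields the canonical isomorphism. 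Canonicity comes from the uniqueness of intermediate extensions, which also ensures the isomorphism respects Weil structures. This is exactly \cite[Proposition 8.15 / 2.5]{L5}, and I would cite it, spelling out the smallness only as needed.

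\textbf{Part (i).} Under the hypotheses of \cite[Proposition 8.15]{L5}, Lusztig identifies the characteristic function of the induced character sheaf with the Deligne--Lusztig character. Using (ii), the characteristic function of $\pi_!\rho^*\mathscr L_\eta$ at $g\in G^{F^n}$ is computed by the Grothendieck trace formula. It equals the sum over $F^n$-fixed points $hB$ of the fibre, weighted by $K^n_{\mathscr L_\eta}(\mathrm d(h^{-1}gh))=\eta(\mathrm N^n_T(\cdot))$. This is Lusztig's formula (\cite[8.15]{L5}, \cite{Lau}) expressing $R^{(n)}_{T,\theta}$ as the trace function of the induced complex.

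The remaining step is bookkeeping. One must check that Remark \ref{rm-torishf} gives $K^n_{\mathscr L_\eta}=\eta\circ\mathrm N^n_T$. One must also check that the Weil structure $\phi_{T,\eta}$ from $j_{!*}$ matches the one used in \cite{L5}. Both structures agree on $G_{rss}$, where the trace formula identifies the function with the regular semisimple values of $R^{(n)}_{T,\eta\circ\mathrm N^n_T}$ via the character formula \cite[Theorem 4.2]{DL}. By uniqueness of intermediate extension they agree everywhere.

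The main obstacle is this normalization check for non-split $T$. There the Frobenius on $\wt G$ twists by the element $w$ defining $T$, and one must verify that the trace-of-Frobenius function produced by $j_{!*}$ is the Deligne--Lusztig character without an extra sign or scalar. I would handle it by restricting to $G_{rss}$, where both sides can be computed explicitly, and then appealing to Lusztig's result that the characteristic functions in question are determined by these values under the stated hypotheses.
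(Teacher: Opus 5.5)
Your proposal is correct and takes essentially the paper's route. The paper cites \cite[Th\'eor\`eme 1.2.2]{Lau} for (ii), whose proof is the smallness-plus-base-change argument over $G_{rss}$ that you sketch, and \cite[Corollaire 2.3.2]{Lau} (Lusztig's theorem under the hypotheses of \cite[Proposition 8.15]{L5}) for (i), which you also ultimately invoke.

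One correction: stratifying $G$ by $\dim C_G(s)$ alone is too coarse to show smallness. In $\mathrm{SL}_2$ the stratum $\pm\mathcal U$ has codimension $1$, yet the fibre over $\pm e$ is $\mathbb P^1$. You must stratify by the conjugacy type of $L=C_G(s)^\circ$ together with the unipotent class of $u$ in $L$. On that stratum the codimension is $\dim C_G(su)-\dim Z(L)$ while $2\dim\pi^{-1}(su)=\dim C_G(su)-\mathrm{rk}\,G$, so the smallness inequality holds whenever $L$ is not a torus.
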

\begin{proof}
    (i) follows from \cite[Corollaire 2.3.2]{Lau}. (ii) is \cite[Th\'eor\`eme 1.2.2]{Lau}. Note that the complex $\mathscr R_{T,\eta}$ is the $[-\dim G]$ shift of  the corresponding perverse sheaf in  \select{loc. cit.}.
\end{proof}

The following two remarks address the hypotheses of \cite[Proposition 8.15]{L5}, but Remark \ref{rm-*qsize} will not be used further in this paper.

\begin{rmk}\label{rm-qsize}
    In view of \cite[Proposition 8.15]{L5}, we verify that there exists an integer $N_G$ depending only on the root datum of $G$, satisfying the following property:
    \begin{itemize}
        \item Fix a positive integer $n$ and an $F^{n}$-stable maximal torus $S$ of $G$. Suppose that $q^n\geq N_G$, then the hypotheses of \cite[Proposition 8.15]{L5} hold for the pair $(G,S)$ (viewed as a pair defined over $\mathbb F_{q^n}$).
    \end{itemize}
    (We may use \cite[1.6 and Lemma 8.2]{L5} to see it.)
\end{rmk}

\begin{rmk}\label{rm-*qsize}
    By \cite[(1.7.1) and (1.9.1)]{Sho} and \cite[Theorem 5.5]{Sho2}, Proposition \ref{prop-charactersheaf-main} (i) holds without the assumption ``suppose that the hypotheses of \cite[Proposition 8.15]{L5} hold for the pair $(G,T)$".  But the authors ignore this fact while writing this paper. And we will not use this remark in the rest of this paper.
\end{rmk}

Suppose (in this paragraph) that both $T$ and $B$ are $F$-stable, then the whole diagram (\ref{diag-charshf}) is defined over $\mathbb F_q$. Consequently, for characters $\eta:T^F\to \Qlb^\times$, we have a Weil structure $\phi^\circ_{T,\eta}:F^* \left(  \pi_!\rho^* \mathscr L_\eta   \right)\xrightarrow{\sim} \pi_! \rho^* \mathscr L_\eta$ given by Grothedieck's construction.

\begin{cor}\label{cor-FstableBorel}
    Suppose that the Borel pair $(B,T)$ is $F$-stable (i.e., both $T$ and $B$ are $F$-stable), then the Weil structure $\phi_{T,\eta}^\circ$ defined in the last paragraph coincides with the Weil structure $\phi_{T,\eta}$ under the identification introduced in Proposition \ref{prop-charactersheaf-main} (ii).
\end{cor}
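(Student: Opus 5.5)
The plan is to compare the two Weil structures by comparing the functions they induce, and then use rigidity to identify the structures themselves. Under the isomorphism $\mathscr R_{T,\eta}\cong \pi_!\rho^*\mathscr L_\eta$ of Proposition \ref{prop-charactersheaf-main} (ii), both $\phi^\circ_{T,\eta}$ and $\phi_{T,\eta}$ are isomorphisms $F^*\mathscr K\xrightarrow{\sim}\mathscr K$ with $\mathscr K:=\pi_!\rho^*\mathscr L_\eta$. Hence they differ by an automorphism $\alpha:=\phi_{T,\eta}\circ(\phi^\circ_{T,\eta})^{-1}$ of $\mathscr K$.

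The first step is to show that $\mathscr K$ is, up to the shift $[\dim G]$, a simple perverse sheaf, or at least that its endomorphism ring is $\Qlb$. For this I would use the intermediate-extension description $\mathscr K\cong j_{!*}(\pi_{rss})_!\rho_{rss}^*j_T^*\mathscr L_\eta[\dim G][-\dim G]$. The endomorphisms of $\mathscr K$ then embed into the endomorphisms of its restriction to $G_{rss}$, since $j_{!*}$ is fully faithful on perverse sheaves. Over $G_{rss}$, the map $\pi_{rss}$ is a finite étale Galois cover with group $W(T)$. Consequently $\mathrm{End}$ is a twisted group algebra indexed by $\{w\in W(T): w^*\mathscr L_\eta\cong\mathscr L_\eta\}$. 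This need not be $\Qlb$ when $\eta$ is not regular, so a purely scalar argument fails in general.

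To handle this, the second step is to compare the restrictions to $G_{rss}$ directly. The isomorphism of Proposition \ref{prop-charactersheaf-main} (ii) is canonical and is built from base change along the open inclusion. Indeed, $\wt j$ identifies $\wt G_{rss}$ with $\pi^{-1}(G_{rss})$, because a regular semisimple element lies in exactly $\#W$ Borels containing it, each corresponding to a unique torus. Moreover, $\rho\circ \wt j=j_T\circ\rho_{rss}$. When $(B,T)$ is $F$-stable, every map in diagram (\ref{diag-charshf}) commutes with $F$, and the base-change isomorphisms $F^*\pi_!\cong\pi_!F^*$ and $F^*\rho^*\cong\rho^*F^*$ are compatible with the restriction to $G_{rss}$. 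So on $G_{rss}$ both Weil structures are the one obtained by Grothendieck's construction from the Weil structure of $\mathscr L_\eta$, which gives $\alpha|_{G_{rss}}=\mathrm{id}$.

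The third step is to conclude from $\alpha|_{G_{rss}}=\mathrm{id}$ that $\alpha=\mathrm{id}$. Since $\mathscr K[\dim G]$ is the intermediate extension of its restriction to $G_{rss}$, the restriction map $\mathrm{End}(\mathscr K)\to\mathrm{End}(\mathscr K|_{G_{rss}})$ is injective, and the claim follows. Here $\phi_{T,\eta}$ is by definition $j_{!*}$ of the structure on $G_{rss}$, and by uniqueness the same holds for $\phi^\circ_{T,\eta}$.

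I expect the main obstacle to be the second step: verifying that Lusztig's/Laumon's canonical isomorphism restricts on $G_{rss}$ to the evident base-change identification. This requires unwinding the construction in \cite{Lau}, namely the proper base change, the smallness of $\pi$, and the identification $\pi^{-1}(G_{rss})\cong\wt G_{rss}$, and then checking that each ingredient is $F$-equivariant when $B$ is $F$-stable.
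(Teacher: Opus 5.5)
Your argument is correct and is essentially the paper's: the full faithfulness of the intermediate extension reduces the comparison to $G_{rss}$, where both Weil structures come from Grothendieck's construction via the base-change identification $\pi^{-1}(G_{rss})\cong\wt G_{rss}$. Your first step on $\mathrm{End}(\mathscr K)$, like your opening remark about comparing trace functions, is an unnecessary detour; the point you flag as the main obstacle, namely that the isomorphism in Proposition \ref{prop-charactersheaf-main} (ii) restricts on $G_{rss}$ to this base-change identification, is exactly what the paper's proof takes for granted.
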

\begin{proof}
    It suffices to show that $\phi_{T,\eta}^\circ[\dim G]$ coincides with $\phi_{T,\eta}[\dim G]$ under the (shifted) identification introduced in Proposition \ref{prop-charactersheaf-main} (ii).  
    
    By the formal property of intermediate extension, it suffices to see $\phi_{T,\eta}^\circ[\dim G]$ coincides with $\phi_{T,\eta}[\dim G]$ when restricted to $G_{rss}$. Since both $\phi_{T,\eta}^\circ[\dim G]|G_{rss}$ and $\phi_{T,\eta}[\dim G]|G_{rss}$ arise  from Grothendieck's construction, our assertion follows.
\end{proof}
\begin{rmk}
    Corollary \ref{cor-FstableBorel} imposes no condition on the size of $q$.
\end{rmk}
\subsection{Periods and sheaves}In this subsection, we will show that certain top cohomology spaces capture our desired  multiplicities.

 
\begin{defn}\label{def-periods}
    Fix an $F$-stable maximal torus $T$ of $G$ and a character $\chi:T^F\to \Qlb^\times$. For positive integers $n$, we set 
    $$
    \mathrm P_{T,\chi}(n):=\langle R_{T,\chi\circ \mathrm N_T^n}^{(n)},1_{H^{F^n}}\rangle_{H^{F^n}},
    $$
    where $\mathrm N_T^n:T^{F^n}\to T^F$ is the norm map.
\end{defn}
Set $\mathcal X:=G/H$ and $\wt G_H:=\{(g_1,g_2H)\in G\times \mathcal X:g_2^{-1}g_1g_2\in H\}$. Let $\tau: \wt G_H\to G$ be the projection to the first factor. Set $\mathscr I_{H}:=\tau_! \Qlb$. We endow $\mathscr I_H$ with the obvious Weil structure via Grothendieck's construction.
Then the function $K_{\mathscr I_H}^n$ corresponding to $\mathscr I_H$ is the character of $\mathrm {Ind}_{H^{F^n}}^{G^{F^n}}(1_{H^{F^n}})$ by Grothendieck trace formula.

The following proposition is a trivial instance of  Grothendieck  trace formula. Similar argument is adopted in \cite{S}.
\begin{prop}\label{prop-periodtraceformula}
Let the notation be as in Definition \ref{def-periods}. Fix a positive integer $n$.  Suppose the hypotheses of \cite[Proposition 8.15]{L5} hold for the pair $(G,T)$ (viewed as a pair defined over $\mathbb F_{q^n}$). Then we have the following equation 
    $$
  \#G^{F^n} \cdot  \mathrm P_{T,\chi}(n)=\mathrm {Tr}((F^n)^*,H^\bullet_c(G,\mathscr R_{T,\chi}\otimes \mathscr I_H)).
    $$
\end{prop}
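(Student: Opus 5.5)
The plan is a direct application of the Grothendieck trace formula to the tensor product $\mathscr R_{T,\chi}\otimes \mathscr I_H$, together with an unfolding of the inner product $\langle\cdot,\cdot\rangle_{H^{F^n}}$ into an inner product on $G^{F^n}$ by Frobenius reciprocity.

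First, I rewrite the left-hand side. By Frobenius reciprocity,
$$
\mathrm P_{T,\chi}(n)=\langle R^{(n)}_{T,\chi\circ\mathrm N_T^n},1_{H^{F^n}}\rangle_{H^{F^n}}=\langle R^{(n)}_{T,\chi\circ\mathrm N_T^n},\mathrm{Ind}_{H^{F^n}}^{G^{F^n}}1_{H^{F^n}}\rangle_{G^{F^n}}.
$$
The induced character is real-valued, being a permutation character. Hence no complex conjugation issue arises, and
$$
\#G^{F^n}\cdot \mathrm P_{T,\chi}(n)=\sum_{g\in G^{F^n}} R^{(n)}_{T,\chi\circ\mathrm N_T^n}(g)\cdot \mathrm{Ind}_{H^{F^n}}^{G^{F^n}}(1)(g).
$$

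Second, I identify each factor with a trace-of-Frobenius function. Proposition \ref{prop-charactersheaf-main} (i) applies over $\mathbb F_{q^n}$ because the hypotheses of \cite[Proposition 8.15]{L5} are assumed for $(G,T)$. It gives $K^n_{\mathscr R_{T,\chi}}=R^{(n)}_{T,\chi\circ\mathrm N^n_T}$. As noted just before the proposition, the trace formula for $\tau_!$ gives $K^n_{\mathscr I_H}=\mathrm{Ind}_{H^{F^n}}^{G^{F^n}}(1)$. To justify this, note that the fibre of $\tau$ over $g\in G^{F^n}$ is the fixed locus of $g$ on $\mathcal X$. Its $F^n$-points are the $F^n$-fixed cosets fixed by $g$. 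Since $H$ is connected, Lang's theorem gives $(G/H)^{F^n}=G^{F^n}/H^{F^n}$, so this count is exactly the permutation character.

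Third, since the sheaf-function dictionary is multiplicative for derived tensor products of Weil complexes (stalkwise Künneth with alternating signs), we get $K^n_{\mathscr R_{T,\chi}\otimes\mathscr I_H}=K^n_{\mathscr R_{T,\chi}}\cdot K^n_{\mathscr I_H}$. Applying the Grothendieck–Lefschetz trace formula for the $F^n$-structure on $G$ then gives
$$
\sum_{g\in G^{F^n}}K^n_{\mathscr R_{T,\chi}\otimes\mathscr I_H}(g)=\mathrm{Tr}((F^n)^*,H^\bullet_c(G,\mathscr R_{T,\chi}\otimes\mathscr I_H)),
$$
which is the claim.

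The only real subtlety is bookkeeping of Weil structures. The Weil structure on $\mathscr R_{T,\chi}$ is $\phi_{T,\chi}$, coming from the intermediate extension. Its $n$-th iterate must induce the function $R^{(n)}$ with the normalisation $\chi\circ\mathrm N^n_T$, consistent with Remark \ref{rm-torishf}. This is exactly the content of Proposition \ref{prop-charactersheaf-main} (i), so I will simply invoke it rather than reverify it.
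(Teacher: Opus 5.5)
Your argument is correct and is the one the paper intends: the paper gives no written proof beyond calling the statement a trivial instance of the Grothendieck trace formula, and your steps supply exactly the omitted details. These are Frobenius reciprocity with the integer-valued permutation character, $K^n_{\mathscr R_{T,\chi}}=R^{(n)}_{T,\chi\circ\mathrm N_T^n}$ from Proposition \ref{prop-charactersheaf-main}(i), $K^n_{\mathscr I_H}=\mathrm{Ind}_{H^{F^n}}^{G^{F^n}}(1_{H^{F^n}})$ via Lang's theorem for connected $H$, multiplicativity of the sheaf--function dictionary, and the trace formula on $G$. The one point you flag, that the $n$-th iterate of $\phi_{T,\chi}$ agrees with the Weil structure of the complex built over $\mathbb F_{q^n}$ from $\mathscr L_{\chi\circ\mathrm N_T^n}$, follows from functoriality of $j_{!*}$ together with Lemma \ref{lem-chebo-tori}, so invoking Proposition \ref{prop-charactersheaf-main}(i) over $\mathbb F_{q^n}$ is legitimate.
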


\begin{defn}\label{def-scheme-badmodel}
    For a Borel pair $(B,T)$ of $G$, we set $$\mathcal Y_{T,B}^H:=\{(g,xB,yH)\in G\times G/B\times G/H:\text{$x^{-1}gx\in B$ and $y^{-1}gy\in H$}\}.$$
    Define a map $\rho_{T,B,H}:\mathcal Y_{T,B}^H\to T$ by sending $(g,xB,yH)\in \mathcal Y_{T,B}^H$ to $\mathrm d_T^B(x^{-1}gx)$. Let $\tau_{T,B}^H:\mathcal Y_{T,B}^H\to G$ be the projection to the first factor.
\end{defn}

If the Borel pair $(B,T)$ is $F$-stable (we assume so in this paragraph), then $\mathcal Y_{T,B}^H$ is the pullback to $\mathrm k$ of the corresponding scheme over $\mathbb F_q$. Consequently, for any character $\chi:T^F\to \Qlb^\times$, the sheaf $\rho_{T,B,H}^* \mathscr L_\chi$ has been endowed with a natural Weil structure via the Grothendieck's construction.
\begin{cor}\label{cor-Fstableperiods}
    Fix an $F$-stable Borel pair $(B,T)$ of $G$. For all characters $\chi$ of $T^F$, we have a canonical identification of Weil sheaves $\mathscr R_{T,\chi}\otimes \mathscr I_H\cong (\tau_{T,B}^H)_! \rho_{T,B,H}^*\mathscr L_\chi .$ 
   In particular, we have the following equation
    $$
   \#G^{F^n} \cdot  \mathrm P_{T,\chi}(n)=\mathrm {Tr}\left((F^n)^*,H^\bullet_c(\mathcal Y_{T,B}^H,\rho_{T,B,H}^*\mathscr L_\chi)\right),
    $$
    provided that  the hypotheses of \cite[Proposition 8.15]{L5} hold for the pair $(G,T)$.
\end{cor}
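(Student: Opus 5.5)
The plan is to prove the identification $\mathscr R_{T,\chi}\otimes \mathscr I_H\cong (\tau_{T,B}^H)_! \rho_{T,B,H}^*\mathscr L_\chi$ by proper base change and the projection formula. The trace identity will then follow from Proposition \ref{prop-periodtraceformula}. First, by Proposition \ref{prop-charactersheaf-main} (ii) together with Corollary \ref{cor-FstableBorel}, we have $\mathscr R_{T,\chi}\cong \pi_!\rho^*\mathscr L_\chi$ as Weil sheaves. Here the Weil structure on the right is Grothendieck's construction, which is available because $(B,T)$ is $F$-stable, so the lower row of Diagram (\ref{diag-charshf}) is defined over $\mathbb F_q$. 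Likewise $\mathscr I_H=\tau_!\Qlb$ with its natural Weil structure.

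Next, form the fibre product $\wt G\times_G \wt G_H$, with projections $p_1\colon \wt G\times_G\wt G_H\to\wt G$ and $p_2\colon \wt G\times_G\wt G_H\to\wt G_H$. Its $\mathrm k$-points are triples $(g,xB,yH)$ with $x^{-1}gx\in B$ and $y^{-1}gy\in H$, so it is canonically identified with $\mathcal Y_{T,B}^H$, compatibly with the $\mathbb F_q$-structures. We need this as an isomorphism of schemes, not just of point sets. Since étale sheaf theory is insensitive to nilpotents and universal homeomorphisms, it suffices to compare reduced structures, or simply to define $\mathcal Y_{T,B}^H$ as this fibre product. Under this identification, $\rho\circ p_1=\rho_{T,B,H}$ and $\pi\circ p_1=\tau\circ p_2=\tau_{T,B}^H$. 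Note that the definition of $\rho$ uses $\mathrm d^B_T$ of the conjugate; I will fix the convention $x^{-1}gx$ consistently in both places.

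The projection formula gives
$$\pi_!\rho^*\mathscr L_\chi\otimes\tau_!\Qlb\cong \pi_!\bigl(\rho^*\mathscr L_\chi\otimes\pi^*\tau_!\Qlb\bigr).$$
Proper base change gives $\pi^*\tau_!\Qlb\cong (p_1)_!\Qlb$. Applying the projection formula again along $p_1$, the right-hand side becomes
$$\pi_!(p_1)_!p_1^*\rho^*\mathscr L_\chi=(\tau_{T,B}^H)_!\rho_{T,B,H}^*\mathscr L_\chi.$$
Each of these canonical isomorphisms is compatible with the Weil structures coming from Grothendieck's construction, because all the maps are defined over $\mathbb F_q$. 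This establishes the isomorphism of Weil sheaves.

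For the trace formula, note that $(\tau^H_{T,B})_!$ commutes with $R\Gamma_c$, so
$$H^\bullet_c(G,\mathscr R_{T,\chi}\otimes\mathscr I_H)\cong H^\bullet_c(\mathcal Y_{T,B}^H,\rho_{T,B,H}^*\mathscr L_\chi)$$
$F$-equivariantly. Substituting this into Proposition \ref{prop-periodtraceformula}, which applies under the hypotheses of \cite[Proposition 8.15]{L5}, gives the stated equation for every $n$ (using $F^n$ in place of $F$).

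The main obstacle is the Weil-structure compatibility in the first step. Without the $F$-stability of $B$ one would only have $\phi_{T,\chi}$ from the intermediate extension, and there would be no way to compare it with the base-change isomorphisms. Corollary \ref{cor-FstableBorel} exactly removes this obstacle, so the remaining checks are formal.
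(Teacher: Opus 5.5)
Your proposal is correct and follows the same route as the paper. It uses the cartesian square identifying $\mathcal Y_{T,B}^H$ with $\wt G\times_G\wt G_H$, Corollary \ref{cor-FstableBorel} to match the Weil structures, proper base change, and then Proposition \ref{prop-periodtraceformula} for the trace identity. The only difference is that you make explicit the two applications of the projection formula, and the $x^{-1}gx$ convention for $\rho$, which the paper leaves implicit.
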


\begin{proof}
Note that we have the following cartesian square: (Here $\pi$ is the morphism $\pi$ introduced in Section \ref{sec-chashandDL} with respect to the Borel pair $(B,T)$.)

$$
\xymatrix{
&\mathcal Y_{T,B}^H \ar[d]\ar[r]&\wt G_H\ar[d]^\tau\\
&\wt G\ar[r]^\pi&G
}
$$
    Our assertion follows from Proposition \ref{prop-periodtraceformula}, Corollary \ref{cor-FstableBorel}, proper base change and the Grothendieck trace formula.
\end{proof}

\begin{rmk}\label{rm-badmodel}
    Fix a Borel pair $(B,T)$ of $G$ with $F$-stable $T$. Then we have an identification of $\Qlb$-sheaves $\mathscr R_{T,\chi}\otimes \mathscr I_H\cong (\tau_{T,B}^H)_! \rho_{T,B,H}^* \mathscr L_\chi $. This follows from proper base change.  
\end{rmk}

\begin{cor}\label{cor-cohomoweight}
  Fix a Borel pair $(B,T)$ of $G$ with $F$-stable $T$.  Fix an integer $i$. For eigenvalues $\alpha$ of $F^*$ on the cohomology space $H^i_c(G,\mathscr R_{T,\chi}\otimes \mathscr I_H)$, we have the following: 
  \begin{itemize}
      \item [(i)] $|\alpha|\leq q^{i/2}$; (Here we fix an arbitrary identification $\Qlb\cong \mathbb C$ of fields.)
      \item [(ii)] Suppose that $i=2\dim G$, then $\alpha/q^{\dim G}$ is a root of unity.
  \end{itemize}
\end{cor}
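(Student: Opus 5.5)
Via Remark \ref{rm-badmodel}, the plan is to replace the cohomology of $\mathscr R_{T,\chi}\otimes \mathscr I_H$ on $G$ with the cohomology of the rank-one sheaf $\rho_{T,B,H}^*\mathscr L_\chi$ on the variety $\mathcal Y^H_{T,B}$. Proper base change gives $H^i_c(G,\mathscr R_{T,\chi}\otimes\mathscr I_H)\cong H^i_c(\mathcal Y^H_{T,B},\rho_{T,B,H}^*\mathscr L_\chi)$. We need this compatibly with Frobenius, so we choose the Borel pair suitably: since $T$ is $F$-stable, it lies in some $F$-stable Borel subgroup $B'$. By Corollary \ref{cor-Fstableperiods} we may then work with the $F$-stable pair $(B',T)$, where the identification is one of Weil sheaves. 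The Weil structure on $\mathscr R_{T,\chi}$ does not depend on the choice of $B$, since it comes from the intermediate extension. We may therefore assume $(B,T)$ is $F$-stable, so that $\mathcal Y:=\mathcal Y^H_{T,B}$, $\rho:=\rho_{T,B,H}$, and $\mathscr L:=\rho^*\mathscr L_\chi$ are all defined over $\mathbb F_q$, with compatible Frobenius action on cohomology.

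For (i), we apply Deligne's weight theory \cite[Th\'eor\`eme 3.3.1]{D}. By Remark \ref{rm-torishf}, $\mathscr L_\chi$ is a direct summand of $\mathrm l^T_*\Qlb$, and its Frobenius traces are roots of unity, so it is pure of weight $0$; hence $\mathscr L$ is mixed of weights $\le 0$. Deligne's theorem then says $H^i_c(\mathcal Y,\mathscr L)$ is mixed of weights $\le i$, which gives $|\alpha|\le q^{i/2}$ for every complex embedding. This step is routine.

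For (ii), we first bound $\dim\mathcal Y\le\dim G$. Projecting to $(xB,yH)\in G/B\times G/H$, the fibre over $(xB,yH)$ is $xBx^{-1}\cap yHy^{-1}$. Stratifying $G/B\times G/H$ by $G$-orbits, which correspond to the finitely many $(B,H)$-double cosets by sphericity, the stratum for $w$ has dimension $\dim G-\dim(B\cap wHw^{-1})$. Its fibres have dimension $\dim(B\cap wHw^{-1})$, so each stratum of $\mathcal Y$ has dimension exactly $\dim G$. Consequently $H^{2\dim G}_c(\mathcal Y,\mathscr L)$ is a direct sum over the irreducible components $Z$ of dimension $\dim G$ of $H^{2\dim G}_c(Z^{\circ},\mathscr L)$, with $Z^\circ$ a smooth open dense part. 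By Poincar\'e duality, each summand is dual to the $\pi_1$-coinvariants (equivalently invariants, up to twist) of the rank-one local system $\mathscr L|_{Z^\circ}$. It is therefore either $0$ or one-dimensional, carrying Frobenius action given by $q^{\dim G}$ times the Frobenius action on the geometric fibre of $\mathscr L$ at a point, permuted among the $F$-orbit of components. Since $\mathscr L_\chi$ has finite monodromy, with $\mathscr L_\chi^{\otimes\#T^F}\cong\Qlb$ and Weil structure trivialized by the rigidification at $e$ as in Lemma \ref{lem-chebo-tori}, the Frobenius scalars are roots of unity. Frobenius permutes the finitely many top-dimensional components, so some power $F^m$ stabilizes each of them. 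On each invariant line $F^m$ then acts by $q^{m\dim G}\zeta$ with $\zeta$ a root of unity. Hence every eigenvalue $\alpha$ of $F^*$ satisfies that $(\alpha/q^{\dim G})^m$ is a root of unity, so $\alpha/q^{\dim G}$ is one too.

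The main obstacle is the careful bookkeeping in (ii): justifying the dimension bound uniformly (using sphericity and the finiteness of $B\backslash G/H$), and checking that Frobenius acts on the top cohomology of a rank-one local system with finite-order Weil structure by $q^{\dim G}$ times a root of unity. An alternative route we would try first for the latter is to note that $\mathscr L^{\otimes N}\cong\Qlb$ as Weil sheaves for $N=\#T^F$. Combined with the Tate-twist description $H^{2d}_c(Z^\circ,\Qlb)\cong\Qlb(-d)^{\oplus}$, this shows that the eigenvalues of $F^*$ on the line, raised to suitable powers, become powers of $q^{\dim G}$.
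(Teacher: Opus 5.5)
Your overall route is the paper's, but the reduction step rests on a false claim: an $F$-stable maximal torus need not lie in any $F$-stable Borel subgroup. Only the maximally split tori (those of type $w=1$ in the classification by $F$-conjugacy classes of the Weyl group) do. For instance, the anisotropic torus $T\subset\mathrm{GL}_2$ with $T^F\cong\mathbb F_{q^2}^\times$ lies in exactly two Borel subgroups, and $F$ interchanges them. For such $T$, no choice of $B\supset T$ makes $\mathcal Y^H_{T,B}$ and $\rho_{T,B,H}$ defined over $\mathbb F_q$. So Corollary \ref{cor-Fstableperiods} is not available. Remark \ref{rm-badmodel} only gives an isomorphism of $\Qlb$-sheaves, not of Weil sheaves. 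The Frobenius-compatible identification $H^i_c(G,\mathscr R_{T,\chi}\otimes\mathscr I_H)\cong H^i_c(\mathcal Y^H_{T,B},\rho^*_{T,B,H}\mathscr L_\chi)$, on which both your (i) and (ii) depend, is therefore unjustified exactly for non-split tori. Those are cases the paper needs: the corollary feeds into Theorem \ref{thm-pertopcohomo} for arbitrary $F$-stable $T$ and into the sum over $\mathcal T$ in the proof of Theorem \ref{thm-refine-main}.

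The missing step is the one the paper uses: pass to a finite extension. Since $F$ permutes the finitely many Borel subgroups containing $T$, some power $F^n$ fixes one of them. After replacing $\mathbb F_q$ by $\mathbb F_{q^n}$, and the Weil structures by their $n$-fold iterates, you may assume $(B,T)$ is $F$-stable. Nothing is lost, because the eigenvalues of $(F^n)^*=(F^*)^n$ are the $\alpha^n$. Moreover, $|\alpha^n|\le (q^n)^{i/2}$ is equivalent to $|\alpha|\le q^{i/2}$, and $\alpha^n/q^{n\dim G}$ being a root of unity is equivalent to $\alpha/q^{\dim G}$ being one. This is the same power trick you already use at the end of (ii). With this repair, the rest of your argument coincides with the paper's proof. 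For (i), you use purity of $\rho^*_{T,B,H}\mathscr L_\chi$ together with Deligne's weight theorem. For (ii), you use the dimension count $\dim\mathcal Y^H_{T,B}=\dim G$ (Proposition \ref{pro-dimYTBH}) together with the top-degree Poincar\'e duality argument for a sheaf with root-of-unity traces (Lemma \ref{lem-tpduality}).
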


\begin{proof}
    In proving this corollary, we may replace the field $\mathbb F_q$ by an extension of sufficiently divisible degree (and the Frobenius is replaced by the corresponding power). Consequently, we may assume that $T$ is contained in an $F$-stable Borel subgroup $B$ of $G$, and that the hypotheses related to \cite[Proposition 8.15]{L5} are fulfilled. And we assume this henceforth.

Using Corollary \ref{cor-Fstableperiods}, we identify the cohomology space $H^i_c(G,\mathscr R_{T,\chi}\otimes \mathscr I_H)$ with $H^i_c(\mathcal Y_{T,B}^H,\rho_{T,B,H}^*\mathscr L_\chi)$. This identification is compatible with the endomorphisms $F^*$.
Note that $\rho_{T,B,H}^* \mathscr L_\chi$ is pure of weight $0$ as a sheaf on $\mathcal Y_{T,B}^H$ with respect to the natural Weil structure.
Then assertion (i) follows from  Deligne's weight theory \cite[Th\'eor\`eme 3.3.1]{D} (applied to $H^i_c(\mathcal Y_{T,B}^H,\rho_{T,B,H}^*\mathscr L_\chi)$). And assertion (ii) follows from  Poincar\'e duality (see Lemma \ref{lem-tpduality} below) and the fact that $\dim \mathcal Y_{T,B}^H=\dim G$ (see Proposition \ref{pro-dimYTBH}).
\end{proof}

In the above proof, we use the following lemma.
\begin{lem}\label{lem-tpduality}
    Let $X_0$ be an algebraic scheme of dimension $\leq n$ over $\mathbb F_q$. 
    Let $X$ be the pullback of $X_0$ to $\mathrm k$. Let $\mathscr F$ be a Weil sheaf on $X$ that is pure of weight $0$ such that all characteristic functions $K_\mathscr F^m$ (for all positive integers $m$) take values in roots of unity. Suppose that $\alpha$ is an eigenvalue of $F^*$ on $H^{2 n}_c(X,\mathscr F)$, then the number $\alpha/q^{ n}$ is a root of unity.
\end{lem}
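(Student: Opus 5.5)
We reduce to the top-dimensional part of compactly supported cohomology, which is a permutation-like space spanned by irreducible components. Since replacing $\mathbb F_q$ by $\mathbb F_{q^m}$ replaces $F^*$ by $(F^*)^m$ and $\alpha$ by $\alpha^m$, and $\alpha/q^n$ is a root of unity iff $\alpha^m/q^{mn}$ is, we may pass to a finite extension at any point. First we reduce to the case where $X$ is irreducible of dimension exactly $n$. Take a dense open $U\subset X$ that is smooth, defined over $\mathbb F_{q}$ after enlarging the field, and on which $\mathscr F$ is lisse; we may also assume the irreducible components of $U$ are disjoint and each is $F$-stable. The complement $Z$ has dimension $<n$, so $H^{2n-1}_c(Z,\mathscr F)=H^{2n}_c(Z,\mathscr F)=0$. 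The excision sequence then gives an $F^*$-equivariant isomorphism $H^{2n}_c(U,\mathscr F)\cong H^{2n}_c(X,\mathscr F)$. Components of $U$ of dimension $<n$ contribute nothing, so we are reduced to a smooth connected $U$ of dimension $n$ with a lisse sheaf $\mathscr F$.

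By Poincar\'e duality, $H^{2n}_c(U,\mathscr F)\cong H^0(U,\mathscr F^\vee)^\vee(-n)$. Thus the eigenvalues of $F^*$ on it are $q^n\beta^{-1}$, where $\beta$ runs over eigenvalues of $F^*$ on $H^0(U,\mathscr F^\vee)=(\mathscr F^\vee_{\bar x})^{\pi_1(U)}$ (the geometric $\pi_1$). It therefore suffices to show that these $\beta$ are roots of unity. We propose two routes. The first: since $H^0(U,\mathscr F^\vee)$ injects $F$-equivariantly into the stalk at any $\bar x$, if $U$ has a point $x\in U^{F^m}$ (true after enlarging the field), then $\beta^m$ is an eigenvalue of $F^{m*}$ on $\mathscr F^\vee_x$. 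Its trace is $K^{m}$-type data, and we need the eigenvalues of Frobenius at $x$ themselves to be roots of unity. This is the main obstacle. The hypothesis only says that the traces $K^{mk}_{\mathscr F}(x)$ are roots of unity for all $k$. Let $\gamma_1,\dots,\gamma_r$ be the eigenvalues of $F_x$ on $\mathscr F_x$, which are algebraic numbers of absolute value $1$ under every complex embedding, by purity of weight $0$ (interpreted as $\iota$-purity for all $\iota$, or using the fixed identification with Galois-conjugate arguments). The power sums $p_k=\sum\gamma_i^k$ are roots of unity for all $k\ge1$.

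The second route, which avoids the stalk-eigenvalue issue: use the hypothesis through the trace formula directly. By Deligne's bound, all other cohomology has weights $<2n$, so $\mathrm{Tr}((F^m)^*,H^\bullet_c(U,\mathscr F))=\sum_i \alpha_i^m+O(q^{m(n-1/2)})$, where $\alpha_i$ are the top-degree eigenvalues. The left side is $\sum_{x\in U^{F^m}}K^m_{\mathscr F}(x)$, a sum of at most $\#U^{F^m}\le Cq^{mn}$ roots of unity. This alone does not force the $\alpha_i/q^n$ to be roots of unity, so we prefer the stalk argument.

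For the power-sum claim we argue as follows. The quantity $|p_k|=1$ holds for all $k$, and the $\gamma_i$ lie in a fixed number field. Replacing $\gamma_i$ by its Galois conjugates preserves both properties (conjugates of roots of unity are roots of unity), so every conjugate of each $\gamma_i$ has absolute value $1$ and is integral. Integrality comes from the $p_k$ being algebraic integers for all $k$: Newton's identities with bounded denominators, together with the standard lemma that if all power sums are integral, the $\gamma_i$ are integral. Kronecker's theorem then shows that each $\gamma_i$ is a root of unity. Hence $\beta^m$, and so $\beta$, is a root of unity, and $\alpha=q^n\beta^{-1}$ gives the lemma. The step to check carefully is the integrality and all-conjugate bound. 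If only $\iota$-purity for the single identification is available, we instead use the conjugation trick on the $p_k$ as above, which needs no purity beyond that.
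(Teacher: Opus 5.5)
Your proof is correct and follows the same route as the paper. Both excise to a smooth dense open on which $\mathscr F$ is lisse (the complement contributes nothing in degrees $2n-1$ and $2n$), apply Poincar\'e duality, and reduce to showing that the eigenvalues of $F^*$ on $H^0$ of the dual sheaf are roots of unity. The paper calls that last step ``easy to see'', which is immediate only in rank one (the case the paper actually uses). Your stalk-at-an-$F^m$-fixed-point argument with the power-sum lemma fills it in for arbitrary rank, and purity is not needed: integrality of the $\gamma_i$ comes from integrality of all $p_k$, the bound $|\sigma(\gamma_i)|\le 1$ from boundedness of all $\sigma(p_k)$, and Kronecker's theorem finishes.
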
 
\begin{proof}
    We may replace $X_0$ by its reduced subscheme with the same underlying topological space. Hence we assume $X_0$ is reduced. We choose a open subscheme $V_0$ of $X_0$ such that (Let $V$ be the pullback of $V_0$ to $\mathrm k$ and let $j:V\hookrightarrow X$ be the inclusion.)
    \begin{itemize}
        \item $V_0$ contains the generic points of all irreducible components  of $X_0$ that is of dimension $n$. 
        \item $V_0$ is smooth over $\mathbb F_q$.
        \item The restriction of $\mathscr F$ to $V$ is lisse.
    \end{itemize}
    We verify that such $V_0$ exists. Let $i:Z\hookrightarrow X$ be the complement of $j$, with $Z$ reduced. Note that $\dim Z\leq n-1$.
     We have the distinguished triangle
     $$
     j_!j^* \mathscr F\to \mathscr F\to i_*i^*\mathscr F\to.
     $$
     This gives rise to an identification $H^{2n}_c(V,j^*\mathscr F)\cong H^{2n}_c(X,\mathscr F)$ that is compatible with the action of the Frobenius. Note that the Frobenius permute the irreducible components of $V$. Since the characteristic functions of $\mathscr F$ take values in roots of unity, it is easy to see that the eigenvalues of the operator $F^*$ on $H^0(V,(j^*\mathscr F)^\vee)$ are all roots of unity, where $(j^*\mathscr F)^\vee$ denote the lisse Weil sheaf on $V$ such that it is dual to the restriction of $j^*\mathscr F$ over each irreducible component of $V$.  Using Poincar\'e duality, we conclude the proof.
\end{proof}

\begin{rmk}
   Alternatively, the above corollary follows from the ``semicontinuity of weights" with respect to the intermediate extension.
\end{rmk}

Let $\mathbb Z_+$ denote the set of positive integers.
We introduce the following definition to invoke Lemma \ref{lem-const}.
\begin{defn} \label{def-gtype}
Let  $\CP\subset \BZ_+$ be an arithmetic progression. A function $M: \CP \to \BC $ is said to be of geometric type if it is of the form
\[
M(\nu) = \frac{\sum^k_{i=1} a_i \alpha_i^\nu}{\sum^l_{j=1} b_j \beta_j^\nu } ,\quad \nu\in\CP,
\]
where $a_i, \alpha_i, b_j, \beta_j \in\BC$, and the denominator is nonzero for every $\nu\in \CP$. If the denominator is a nonzero constant, we say that $M$ is of trace type.  
\end{defn}
In the remainder of this paper, we fix an identification $\Qlb\cong \mathbb C$.
We have the following elementary lemma, see  \cite[Lemma 2.2]{LMS}. 

\begin{lem} \label{lem-const}
Let $M$ be a function of geometric type defined on an arithmetic progression $\CP\subset \mathbb Z_+$. If $M$ is integer-valued and  has a finite limit $L\in \mathbb C$ as $\nu\to\infty$ through $\CP$, then 
$M$ is a constant function taking the value $L$. 
\end{lem}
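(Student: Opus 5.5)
The plan is to turn the eventual constancy of $M$ into an exact identity between the numerator and denominator, and then to propagate that identity from a tail of $\CP$ to all of $\CP$ using the rigidity of exponential polynomials.

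First, $L$ is an integer. Since $M$ is integer-valued and $M(\nu)\to L$ along $\CP$, the values $M(\nu)$ are eventually within distance $1/2$ of $L$. Hence they are eventually all equal to a single integer, and that integer must be $L$. So there is $\nu_0$ with $M(\nu)=L$ for all $\nu\in\CP$, $\nu\ge\nu_0$.

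Next, write $M=N/D$ with $N(\nu)=\sum_i a_i\alpha_i^\nu$ and $D(\nu)=\sum_j b_j\beta_j^\nu$. Then the function
$$E(\nu):=N(\nu)-L\,D(\nu)$$
vanishes for all $\nu\in\CP$ with $\nu\ge\nu_0$.
\begin{itemize}
\item Write $\CP=\{a+dm: m\ge 0\}$. Then $E(a+dm)=\sum_k c_k\gamma_k^{a}(\gamma_k^{d})^m$, where the $\gamma_k$ run over the $\alpha_i$ and $\beta_j$, and the $c_k$ are the corresponding coefficients $a_i$ and $-Lb_j$.
\item Terms with $\gamma_k=0$ contribute nothing, because $\nu=a+dm\ge 1$ for every $\nu\in\CP\subset\BZ_+$. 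We drop them.
\item Group the remaining terms according to the value $\delta=\gamma_k^d$. This gives $E(a+dm)=\sum_{s=1}^r e_s\delta_s^m$ with distinct nonzero $\delta_1,\dots,\delta_r$.
\end{itemize}

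The key step is to show that all $e_s$ vanish. This sum vanishes for every $m\ge m_0$. Take the $r$ consecutive values $m=m_0,\dots,m_0+r-1$. The resulting linear system in the unknowns $e_s\delta_s^{m_0}$ has Vandermonde matrix $(\delta_s^{t})_{0\le t<r,\,1\le s\le r}$. This matrix is invertible because the $\delta_s$ are distinct. Hence $e_s\delta_s^{m_0}=0$, and since $\delta_s\neq 0$, every $e_s=0$. Therefore $E$ vanishes identically on $\CP$.

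Finally, $N(\nu)=L\,D(\nu)$ for all $\nu\in\CP$. Since the denominator $D$ is nonzero on all of $\CP$ by hypothesis, it follows that $M(\nu)=L$ for every $\nu\in\CP$.

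The only point needing care is the bookkeeping: dropping zero bases, and merging terms whose $d$-th powers coincide, so that the Vandermonde argument applies to genuinely distinct nonzero bases. No step is a real obstacle.
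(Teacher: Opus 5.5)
Your proof is correct and complete. It uses eventual constancy of a convergent integer-valued sequence, then linear independence of the distinct nonzero exponentials $m\mapsto\delta_s^m$ on a tail of $\mathcal{P}$ via the Vandermonde determinant; you also handle zero bases and bases whose $d$-th powers coincide. The paper gives no proof of its own and simply cites \cite[Lemma 2.2]{LMS}, so your write-up is a self-contained version of the standard argument for this fact.
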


We can  state the main theorem of this subsection.

\begin{thm}\label{thm-pertopcohomo}
  Let the notation be as in Definition \ref{def-periods}. Fix a positive integer $n$.   Suppose that $q^n\geq N_G$, where $N_G$ is the integer introduced in Remark \ref{rm-qsize}.  Then  we have $$
    \mathrm P_{T,\chi}(n)=\frac{1}{q^{n\dim (G)}}\mathrm {Tr}\left((F^n)^*, H^{2\dim G}_c(G,\mathscr R_{T,\chi}\otimes \mathscr I_H)\right ).
    $$
\end{thm}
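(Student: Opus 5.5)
The plan is to combine Proposition \ref{prop-periodtraceformula} with the weight bounds of Corollary \ref{cor-cohomoweight} and the rigidity Lemma \ref{lem-const}. Fix $n$ with $q^n\ge N_G$. Consider the arithmetic progression $\CP=\{n m: m\in\mathbb Z_+\}$; for every $\nu\in \CP$ we have $q^\nu\ge q^n\ge N_G$. By Remark \ref{rm-qsize}, the hypotheses of \cite[Proposition 8.15]{L5} then hold for $(G,T)$ over $\mathbb F_{q^\nu}$. One should also check that the function $\nu\mapsto \mathrm P_{T,\chi}(\nu)$ on $\CP$ is compatible with the norm maps: $\chi\circ\mathrm N_T^{\nu}$ is the pullback of $\chi\circ \mathrm N_T^n$ along $\mathrm N_T^{\nu,n}$, so viewing everything over $\mathbb F_{q^n}$ costs nothing. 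Proposition \ref{prop-periodtraceformula} then gives, for all $\nu\in\CP$,
$$
\mathrm P_{T,\chi}(\nu)=\frac{1}{\#G^{F^\nu}}\sum_{i}(-1)^i\sum_{\alpha\in \mathrm{Ev}_i}\alpha^{\nu},
$$
where $\mathrm{Ev}_i$ is the multiset of eigenvalues of $F^*$ on $H^i_c(G,\mathscr R_{T,\chi}\otimes\mathscr I_H)$.

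Next, $\#G^{F^\nu}$ is a polynomial-type expression $\sum_j b_j\beta_j^\nu$, nonzero for every $\nu$, and its leading term is $q^{\nu\dim G}$: for instance, $\#G^{F^\nu}=q^{\nu\dim G}\prod(1-\epsilon_k q^{-\nu d_k})$ with $d_k\ge1$, or one can simply invoke Lang--Weil. Hence $\mathrm P_{T,\chi}$ is a function of geometric type on $\CP$ and is integer-valued, being an inner product of a virtual character with a trivial one. By Corollary \ref{cor-cohomoweight}(i), every eigenvalue on $H^i_c$ with $i<2\dim G$ satisfies $|\alpha|\le q^{(2\dim G-1)/2}$, so after division by $\#G^{F^\nu}\sim q^{\nu\dim G}$ these contributions tend to $0$. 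The cohomology vanishes for $i>2\dim G$ since $\dim\mathcal Y^H_{T,B}=\dim G$; alternatively one can argue via the support of the complex. By Corollary \ref{cor-cohomoweight}(ii), the top-degree eigenvalues are $q^{\dim G}\zeta$ with $\zeta$ roots of unity.

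Now pass to a sub-progression $\CP'=\{nmr\}$, where $r$ is a common order of all these roots of unity. On $\CP'$ we have $\zeta^\nu=1$, and $\mathrm P_{T,\chi}(\nu)$ tends to $\dim H^{2\dim G}_c$. Lemma \ref{lem-const} then shows that $\mathrm P_{T,\chi}$ is constant on $\CP'$, equal to $D:=\dim H^{2\dim G}_c$. The main obstacle is getting back from $\CP'$ to the single index $n$ and obtaining the trace rather than the dimension. For this, use the sub-progression $\CP''=\{n+ nmr: m\ge 0\}$ (taking $m\in\mathbb Z_{\ge0}$ so that $n$ itself belongs to it). On $\CP''$ the top-degree term divided by $q^{\nu\dim G}$ equals the constant $\sum\zeta^{n}=q^{-n\dim G}\mathrm{Tr}((F^n)^*,H^{2\dim G}_c)$, while the other terms still tend to $0$. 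So $\mathrm P_{T,\chi}$ restricted to $\CP''$ is integer-valued of geometric type with limit $L=q^{-n\dim G}\mathrm{Tr}((F^n)^*,H^{2\dim G}_c)$. Lemma \ref{lem-const}, applied on $\CP''$, shows that it is constant equal to $L$, and evaluating at $\nu=n$ gives the theorem.
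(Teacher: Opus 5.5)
Your proposal is correct and follows the paper's proof: Proposition \ref{prop-periodtraceformula} (valid along the whole progression by Remark \ref{rm-qsize}), the weight bounds of Corollary \ref{cor-cohomoweight} to isolate the top-degree term, restriction to a progression $n+N\mathbb Z_{\ge 0}$ with $N$ killing all the roots of unity $\zeta$, and Lemma \ref{lem-const}; your $\CP''$ is the paper's progression with $N=nr$. The intermediate step on $\CP'$ is unnecessary, and in your product formula for $\#G^{F^\nu}$ each $\epsilon_k$ should be $\epsilon_k^\nu$ (the geometric-type form follows most cleanly from the Grothendieck trace formula for $H^\bullet_c(G,\Qlb)$), but neither point affects the argument.
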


\begin{proof}
   This is a combination of  Corollary \ref{cor-cohomoweight} and Lemma \ref{lem-const}. 

    In what follows we elaborate. We have $H^{i}_c(G,\mathscr R_{T,\chi}\otimes \mathscr I_H)=0$ unless $0\leq i\leq 2\dim G$ by Remark \ref{rm-badmodel} and the fact that $\dim \mathcal Y_{T,B}^H=\dim G$ (see Proposition \ref{pro-dimYTBH}). By Corollary \ref{cor-cohomoweight}, we can choose a sufficiently divisible integer $N\geq 1$ such that $(F^N)^*$ acts on $H^{2\dim G}_c(G,\mathscr R_{T,\chi}\otimes \mathscr I_H)$ as the multiplication by $q^{N\dim G}$. Hence by Proposition \ref{prop-periodtraceformula}, Remark \ref{rm-qsize} and Corollary \ref{cor-cohomoweight}, we have (note that by Remark \ref{rm-qsize}, the hypotheses of  \cite[Proposition 8.15]{L5} are fulfilled)
$$
\lim\limits_{ k\to \infty}\mathrm P_{T,\chi}(n+Nk)= \frac{1}{q^{n\dim (G)}} \mathrm {Tr}\left((F^n)^*, H^{2\dim G}_c(G,\mathscr R_{T,\chi}\otimes \mathscr I_H)\right ).
$$    
By Lemma \ref{lem-const}, we have
$$
\mathrm P_{T,\chi}(n+Nk)= \frac{1}{q^{n\dim (G)}}\mathrm {Tr}\left((F^n)^*, H^{2\dim G}_c(G,\mathscr {R}_{T,\chi}\otimes \mathscr I_H)\right),
$$
for all nonnegative integers $k$.
(Note that $\mathrm P_{T,\chi}$ is integer-valued and of geometric type in the sense of Definition \ref{def-gtype}, due to Proposition \ref{prop-periodtraceformula}.)
\end{proof}

\section{The Schemes $\mathcal X$ and $\mathcal Y_{T,B}^H$}\label{sec-thegeometry}
In this section, we first recall some facts introduced in \cite{Kno}. Then we show in Theorem \ref{thm-almostmulone} that the condition $\star$ in Remark \ref{rm-changeBcondition} implies the almost multiplicity-one property. After some preliminaries on Steinberg representations, we prove the main theorem and conclude with Theorem \ref{thm-main}.

As in the previous section, we denote $\mathcal X:=G/H$. For a Borel pair $(B,T)$ of $G$ with $F$-stable $T$, recall the scheme $\mathcal Y_{T,B}^H$ introduced in Definition \ref{def-scheme-badmodel}.

\subsection{$B$-stabilizers}
In this subsection we fix a Borel pair $(B,T)$ of $G$. Since we force the variety $\mathcal X=G/H$ to be spherical, there exists a unique open dense $B$-orbit  on $\mathcal X$, which we denote by $\mathcal O_\mathcal X^B$. We work with schemes over $\mathrm k$ in this subsection.

As in \cite[Section 2]{Kno}, we adopt the following definition.
\begin{defn}\label{def-rankspherical}
    For a $B$-variety $Z$, we define the following invariance: (see \select{loc. cit.} for details.)
    \begin{itemize}
        \item $\chi(Z):=\{\chi_f\in \chi(B):f\in \mathrm k(Z)^{(B)}\}$; (the character group of $Z$)
        \item $\mathrm {rk}(Z):=\rank (\chi (Z))$.
    \end{itemize}
\end{defn}

Also, for a $G$-variety $X$, we define $\mathfrak B(X)$ to be the set of non-empty, closed, irreducible $B$-stable subsets of $X$.

The following lemma is a trivial instance of \cite[Corollary 2.4]{Kno}.
\begin{lem}\label{lem-rankspherical}
    Let $Z\in \mathfrak B(\mathcal X)$, then $\mathrm {rk}(Z)\leq \mathrm{rk}(\mathcal X)$.
\end{lem}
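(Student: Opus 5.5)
The statement to prove is Lemma~\ref{lem-rankspherical}: for $Z\in\mathfrak B(\mathcal X)$ we have $\mathrm{rk}(Z)\le\mathrm{rk}(\mathcal X)$. The plan is to reduce it to Knop's general statement \cite[Corollary 2.4]{Kno} and to check that its hypotheses hold here. As far as I recall, that corollary says: for an irreducible $G$-variety $X$ and any $Z\in\mathfrak B(X)$, one has $\mathrm{rk}(Z)\le \mathrm{rk}(X)$, together with a complexity inequality. Its proof rests on two ingredients, both of which we would verify in our setting.

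The first ingredient is the structure of $B$-semi-invariants. Take $f\in\mathrm k(Z)^{(B)}$ with character $\chi_f$. We want to see that $\chi(Z)$ lives, up to finite index, inside a lattice controlled by $\mathcal X$. The standard route is Knop's degeneration argument.
\begin{itemize}
\item Replace $\mathcal X$ by a $B$-stable affine open subset $\mathcal X_0$ meeting $Z$. Such an open set exists by a $B$-equivariant version of Sumihiro's theorem, applied to the quasi-projective homogeneous variety $G/H$.
\item Consider the coordinate ring $\mathrm k[\mathcal X_0]$ with its $B$-action, and the ideal $I$ of $Z\cap\mathcal X_0$.
\item Any $B$-eigenfunction on $Z\cap\mathcal X_0$ is a ratio of $B$-eigenvectors in $\mathrm k[Z\cap\mathcal X_0]=\mathrm k[\mathcal X_0]/I$.
\item Since $B$ is solvable and connected (Lie--Kolchin), a $B$-eigenvector in the quotient lifts, after taking a suitable $p$-power in characteristic $p$, to a $B$-eigenvector of $\mathrm k[\mathcal X_0]$ with character a multiple of the original one.
\end{itemize}
It follows that $\chi(Z)\otimes\mathbb Q\subseteq\chi(\mathcal X_0)\otimes\mathbb Q$, which is almost the statement.

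The subtlety is that a $B$-eigenfunction on $\mathcal X_0$ need not define a rational function on $\mathcal X$ with the same character once we pass to fractions. Knop therefore uses the "$B$-stable valuation/horospherical contraction" formulation instead. We would simply invoke \cite[Corollary 2.4]{Kno}, noting that $\mathcal X=G/H$ is an irreducible normal $G$-variety. Knop's results there are characteristic-free, since they use only Sumihiro and Rosenlicht.

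Concretely, the write-up would be:
\begin{enumerate}
\item[(a)] Observe that $\mathcal X$ is irreducible, because $G$ is connected.
\item[(b)] Observe that each $Z\in\mathfrak B(\mathcal X)$ is an irreducible $B$-variety, so $\chi(Z)$ is defined.
\item[(c)] Apply \cite[Corollary 2.4]{Kno}.
\end{enumerate}

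The main obstacle I expect is confirming that Knop's corollary is stated in positive characteristic, and for arbitrary closed $B$-stable irreducible subsets rather than only $B$-orbit closures. If only orbit closures are covered, the fix is easy in the spherical case. There are finitely many $B$-orbits, since $B$-orbits on a spherical variety are finite in number by Knop/Vinberg, and this also holds in characteristic $p$. Hence $Z$ is the closure of its open $B$-orbit, and $\mathrm{rk}(Z)$ is the rank of that orbit. This reduces the lemma to orbit closures.
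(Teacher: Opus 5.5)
Your steps (a)--(c) are exactly the paper's proof: the paper records the lemma as a trivial instance of \cite[Corollary 2.4]{Kno} with no further argument, and applies it over $\mathrm k$ of characteristic $p$ just as you propose. Delete the preliminary ``degeneration'' sketch, though, because it is false even in the paper's setting: take $G=\mathrm{SL}_2\times\mathrm G_m$ acting on $\mathcal X=\mathbb A^2\setminus\{0\}$ by $(g,t)\cdot v=tgv$ (a homogeneous spherical variety with connected stabilizer), $Z=\{y=0\}$, $B$ the upper-triangular Borel subgroup times $\mathrm G_m$, and $\omega(b,t)=b_{11}$, $\epsilon(b,t)=t$; then the only nonempty $B$-stable affine open subset is $\{y\neq 0\}$, which misses $Z$, every $B$-eigenvector in $\mathrm k[\mathcal X]=\mathrm k[x,y]$ is a multiple of some $y^n$ (so the eigenfunction $x|_Z$ has no eigen-lift even up to powers), and $\chi(\mathcal X)=\mathbb Z(\omega-\epsilon)$ while $\chi(Z)=\mathbb Z(\omega+\epsilon)$, so $\chi(Z)\otimes\mathbb Q$ is not contained in $\chi(\mathcal X)\otimes\mathbb Q$ and only the rank inequality survives --- the real obstruction is that $B$ is not reductive, not the passage to fractions (which is harmless since $\mathrm k(\mathcal X_0)=\mathrm k(\mathcal X)$).
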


\begin{rmk}\label{rm-changeBcondition}
    In this remark, let the Borel subgroup $B$ of $G$ temporarily be a variable. Recall the following two conditions introduced in Corollary \ref{cor-efficient}: (see Section \ref{sec-chashandDL} for the definition of $\mathrm d_T^B$)
    \begin{itemize}
            \item[$i)_B$] There exists a unique $w\in B(\mathrm k)\backslash G(\mathrm k)/H(\mathrm k)$ such that the reductive quotient of $B\cap w Hw^{-1}$ is a finite algebraic group: that is, the scheme $\mathrm d_T^B(B\cap w Hw^{-1})$ is finite over $\mathrm k$ for every maximal torus $T$ of $B$;
            \item[$ii)_B$] Let $w\in B(\mathrm k)\backslash G(\mathrm k)/H(\mathrm k)$ be the element specified in $i)_B$. Then the group $B\cap w Hw^{-1}$ endowed with the reduced scheme structure is a connected unipotent algebraic group. 
            \end{itemize}
    We easily verify that different choices of  Borel subgroups $B$ of $G$ yield the equivalent conditions $i)_B+ii)_B$. We say that the necessary condition $\star$ holds for $H$ if $i)_B+ii)_B$ holds for some (hence every) Borel pair $(B,T)$ of $G$.
\end{rmk}

\begin{cor}\label{cor-theBorbit}
    Suppose that the condition $i)_B+ii)_B$ listed above holds for some Borel subgroup $B$ of $G$.  Let $w$ be as in Remark \ref{rm-changeBcondition} $i)_B$ and $w^\circ$ be a representative of $w$ in $\mathcal X=G/H$. 
    Then $B\cdot w^\circ$ is the open dense $B$-orbit $\mathcal O_{\mathcal X}^B$ on $\mathcal X$.
\end{cor}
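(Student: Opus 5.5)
We must show that under $i)_B+ii)_B$ the orbit $B\cdot w^\circ$ is open. The plan is a dimension count comparing the $B$-stabilizer $B_w:=B\cap \wt w H\wt w^{-1}$ with the generic $B$-stabilizer on $\mathcal X$.

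Fix a representative $\wt w$ of $w$, so that the stabilizer of $w^\circ=\wt wH$ in $B$ is $B_w$ and $\dim B\cdot w^\circ=\dim B-\dim B_w$. Since $\mathcal X$ is spherical, the open orbit $\mathcal O_{\mathcal X}^B$ has dimension $\dim\mathcal X$. It therefore suffices to show that $\dim B_w\le \dim B-\dim\mathcal X$, i.e. that $\dim B_w$ is at most the dimension of a generic stabilizer.

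For this I relate everything to ranks. Take any point $x$ in the open orbit, and let $B_x$ be its stabilizer. By Knop's description (\cite[Section 2]{Kno}), $\mathrm{rk}(\mathcal X)=\dim T-\dim \mathrm d_T^B(B_x)$, up to conjugating $T$ inside $B$. Equivalently, $\chi(\mathcal X)$ is the character group of $T/\mathrm d_T^B(B_x)$. The same formula holds for the orbit closure $Z=\overline{B\cdot w^\circ}\in\mathfrak B(\mathcal X)$, with $B_w$ in place of $B_x$: $B$-semi-invariant rational functions on $Z$ are determined on the homogeneous space $B/B_w$, and these correspond to characters of $B$ trivial on $B_w$, which is the rank of $T/\mathrm d_T^B(B_w)$. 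By $i)_B$, $\mathrm d_T^B(B_w)$ is finite, so $\mathrm{rk}(Z)=\dim T$. Lemma \ref{lem-rankspherical} gives $\dim T=\mathrm{rk}(Z)\le\mathrm{rk}(\mathcal X)\le\dim T$. Hence $\mathrm d_T^B(B_x)$ is also finite.

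So both $B_x$ and $B_w$ have finite image in $T$. Now I use the uniqueness in $i)_B$. Consider the orbit $B\cdot x$, which corresponds to some double coset $v\in B(\mathrm k)\backslash G(\mathrm k)/H(\mathrm k)$ with $B\cap vHv^{-1}$ conjugate to $B_x$. Its image under $\mathrm d_T^B$ is finite; note that images under $\mathrm d_T^B$ of conjugates by elements of $B$ are conjugate in $T$, hence equal, so finiteness is well defined. By the uniqueness clause of $i)_B$, $v=w$. Therefore $B\cdot w^\circ=B\cdot x=\mathcal O_{\mathcal X}^B$. This argument actually bypasses the dimension inequality and does not use $ii)_B$.

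The main obstacle is the step identifying $\mathrm{rk}$ of a $B$-orbit closure with the codimension of $\mathrm d_T^B$ of the stabilizer. The point to check is that every character of $B$ trivial on $B_w$ is realized by a rational semi-invariant on $Z$. This holds because $B/B_w$ is quasi-affine (it is a homogeneous space of a solvable group) and $B_u$-invariants separate. I would argue it directly as follows. A character $\lambda$ of $B$ trivial on $B_w$ defines the function $b\cdot w^\circ\mapsto\lambda(b)^{-1}$ on the orbit, which is regular on the orbit and hence rational on $Z$. This gives $\chi(Z)\supseteq\{\lambda:\lambda|_{B_w}=1\}$, which has rank $\dim T-\dim\mathrm d_T^B(B_w)$. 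That containment is all that is needed for the lower bound. For the open orbit I need the reverse inclusion, namely that a semi-invariant on the open orbit is determined up to scalar by its character. This holds because $B$ acts transitively on that orbit.
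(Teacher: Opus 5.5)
Your argument is correct and is essentially the paper's proof. Lemma \ref{lem-rankspherical} squeezes $\dim T=\mathrm{rk}(\overline{B\cdot w^\circ})\le\mathrm{rk}(\mathcal X)\le\dim T$, a $B$-orbit with stabilizer $B_v$ has rank $\dim T-\dim \mathrm d_T^B(B_v)$, and so uniqueness in $i)_B$ forces the open orbit to be $B\cdot w^\circ$; your remark that $ii)_B$ is not needed is also right, since the paper uses $ii)_B$ only to get $\mathrm{rk}(B\cdot w^\circ)=\mathrm{rk}(B)$, which already follows from the finiteness of $\mathrm d_T^B(B_w)$ in $i)_B$.
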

\begin{proof}
    Indeed, we have $\mathrm {rk}(\mathcal X)=\mathrm{rk}(\mathcal O_\mathcal X^B)\leq \mathrm {rk}(B)$. On the other hand, by condition $ii)_B$, we see that $\mathrm {rk}(B\cdot w^\circ)=\mathrm {rk}(B)$. Using Lemma \ref{lem-rankspherical} (let $Z$ be the closure of $B\cdot w^\circ$), we see that $\mathrm {rk}(\mathcal O_\mathcal X^B)=\mathrm {rk}(\mathcal X)=\mathrm {rk}(B\cdot w^\circ)$. Moreover, we see by condition $i)_B$ that $B\cdot w^\circ$ is the unique $B$-orbit on $\mathcal{X}$ with that rank. This forces $\mathcal O_\mathcal X^B=B\cdot w^\circ$.
\end{proof}

\begin{cor}\label{cor-connectedstabilizer}
    Suppose that the necessary condition $\star$ holds for $H$ in the sense of Remark \ref{rm-changeBcondition}. Then for any $x\in \mathcal X$, the corresponding $B$-stabilizer $B_x$ is connected.
\end{cor}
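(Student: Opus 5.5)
The plan is to argue by descending induction over the $B$-orbits of $\mathcal X$, ordered by dimension. The starting point is the open orbit, which is handled by Corollary \ref{cor-theBorbit}. The inductive step passes from an orbit to a smaller one via minimal parabolic subgroups $P_\alpha\supset B$, as in Knop's analysis \cite{Kno}.

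\emph{Base case.} For $x\in\mathcal O_{\mathcal X}^B$, Corollary \ref{cor-theBorbit} says that $\mathcal O_{\mathcal X}^B=B\cdot w^\circ$, where $w$ is the double coset singled out in $i)_B$. Hence $B_x$ is $B$-conjugate to $B\cap wHw^{-1}$, which is connected by $ii)_B$. Since the condition $\star$ does not depend on the Borel subgroup (Remark \ref{rm-changeBcondition}), the same holds for the open orbit of every Borel subgroup.

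\emph{Inductive step.} Let $Y=B\cdot y$ be a non-open orbit. By the standard fact from \cite{Kno}, there is a simple root $\alpha$ such that $P_\alpha\cdot Y$ contains a $B$-orbit $Y'=B\cdot y'$ with $\dim Y'=\dim Y+1$ and $y'\in P_\alpha\cdot y$. We may assume $B_{y'}$ is connected by induction. Set $Q:=(P_\alpha)_y$. Then $P_\alpha\cdot y\cong P_\alpha\times^{Q}\{y\}$, and the $B$-orbits in $P_\alpha\cdot y$ correspond to $Q$-orbits on $P_\alpha/B\cong\mathbb P^1$. Under this correspondence, $B_y$ is the stabilizer in $Q$ of a point of $\mathbb P^1$ not in the open $Q$-orbit, while $B_{y'}$ is (a conjugate of) the stabilizer of a point $p'$ in the open $Q$-orbit $\Omega\subset\mathbb P^1$.

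The first sub-step is to show that $Q$ is connected. The open orbit $\Omega$ is an open subset of $\mathbb P^1$, hence irreducible. The orbit map $Q\to\Omega$ is surjective with fibres the cosets of the connected group $B_{y'}$. Therefore $Q$ is connected.

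The second sub-step is a case analysis of the image $\bar Q$ of $Q$ in $\mathrm{PGL}_2=\Aut(\mathbb P^1)$, following Knop's types. There are three possibilities:
\begin{itemize}
\item[(a)] $\bar Q$ contains a Borel subgroup of $\mathrm{PGL}_2$ (type U). The point stabilizers are then connected subgroups of the connected group $\bar Q$. In this case $B_y$ is the preimage of the fixed point's stabilizer $\bar B$, an extension of the connected group $\bar B$ by $\ker(Q\to\bar Q)\subset B_{y'}$.
\item[(b)] $\bar Q$ is a maximal torus (type T). $B_y$ is then the preimage of $\bar Q$ itself, i.e.\ all of $Q$, which is connected.
\item[(c)] $\bar Q$ is the normalizer of a torus (type N). This case must be excluded, since the stabilizer of either of the two special points has index $2$ in $Q$.
\end{itemize}
For (a), I still need the kernel $\ker(Q\to\bar Q)$ to be connected. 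I would get this from the fact that it equals the kernel of the action of $B_{y'}$ on $\mathbb P^1$, a normal subgroup of $B_{y'}$ with quotient a connected solvable subgroup of $\mathrm{PGL}_2$. I would compare $\pi_0$ via the exact sequence $\pi_1(\bar B)\to\pi_0(\ker)\to\pi_0(B_y)\to\pi_0(\bar B)=1$. This shows $B_y$ connected as soon as the image of $\pi_1(\bar B)$ accounts for $\pi_0(\ker)$. I expect this to reduce to computing with the unipotent and torus parts of $\bar B$.

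\emph{Main obstacle.} The hardest step should be ruling out type N. In type N the orbits $Y$ and $Y'$ have the same rank and $\mathrm d_T^B(B_y)$ differs from $\mathrm d_T^B(B_{y'})$ only by a finite group. I would try to propagate this upward to contradict the uniqueness in $i)_B$. Concretely, I would use Knop's theory of the action of the Weyl group on $\mathfrak B(\mathcal X)$ (the $W$-action with $s_\alpha$ acting via $P_\alpha$) to transport a type N edge to one ending at the open orbit. Then $\mathcal O^B_{\mathcal X}$ would have a neighbour orbit of full rank $\mathrm{rk}(\mathcal X)=\mathrm{rk}(B)$, hence with finite $\mathrm d_T^B$-image, contradicting $i)_B$. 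Alternatively, the top-level type N edge would produce a disconnected generic stabilizer, contradicting $ii)_B$. If this transport argument proves awkward, my fallback is to replace $B$ by another Borel subgroup $B'=gBg^{-1}$ with $x$ in its open orbit, and to compare $B_x$ with $B'_x$ through the common torus part.
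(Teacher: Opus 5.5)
Your overall strategy is sound: descending induction over $B$-orbits, with the base case from Corollary \ref{cor-theBorbit} and a rank-one step through a minimal parabolic $P_\alpha$. It amounts to a self-contained proof of the special case of \cite[Corollary 3.3]{Kno} that the paper simply cites, taking $w=w_G$ once Corollary \ref{cor-theBorbit} gives connectedness of the generic stabilizer. As written, however, your inductive step does not close. In case (a) you think you still need $\ker(Q\to\bar Q)$ to be connected. You also treat type N as the main obstacle, proposing either to transport a type N edge to the open orbit via Knop's $W$-action on $\mathfrak B(\mathcal X)$ or to compare with a second Borel subgroup. Neither is carried out, and neither is needed.

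The missing observation already follows from your first sub-step. Once $Q=(P_\alpha)_y$ is known to be connected, look at the finite $Q$-stable set $\mathbb P^1\setminus\Omega$. Each $Q$-orbit in it is the image of the irreducible variety $Q$, hence an irreducible finite set, hence a single point. So $Q$ fixes every point of $\mathbb P^1\setminus\Omega$. In particular it fixes $eB$, which lies outside $\Omega$ because $Y$ is not the open $B$-orbit of $P_\alpha\cdot y$. Hence $B_y=Q\cap B=Q$, which is connected.

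This disposes of types U and T at once. In (a), the preimage of $\bar Q\cap\bar B=\bar Q$ is all of $Q$, so no kernel or $\pi_0$ computation is required. Incidentally, type U also includes the case where $\bar Q$ is a maximal unipotent subgroup, not only a Borel. The same remark excludes type N outright. There the stabilizer of each of the two special points would be a closed subgroup of index $2$ in $Q$, which is impossible for connected $Q$. With this one remark your induction is complete.
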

\begin{proof}
    This is a trivial instance of \cite[Corollary 3.3]{Kno}. 
    
    Namely,  Corollary \ref{cor-theBorbit} asserts that the $B$-stabilizer $B_y$ is connected for a general $y\in \mathcal X$. Then we take $X=\mathcal X$ and $w=w_G$ the element of maximal length in \cite[Corollary 3.3]{Kno}.
\end{proof}

\subsection{A partition of $\mathcal Y_{T,B}^H$}
We begin by introducing the notation.

In this paragraph, we fix a Borel pair $(B,T)$ of $G$.
For $w\in  B(\mathrm k)\backslash G(\mathrm k)/H(\mathrm k)$, we set 
$$\mathcal Y_{T,B}^{H,w}:=\{(g,xB,yH)\in \mathcal Y_{T,B}^H:Bx^{-1}yH=BwH\}.$$
Let $i_w:\mathcal Y_{T,B}^{H,w}\to \mathcal Y_{T,B}^H$ denote the natural inclusion.

\begin{lem}\label{lem-dimYTBHw}
    The dimension of $\mathcal Y_{T,B}^{H,w}$ is $\dim (G)$.
\end{lem}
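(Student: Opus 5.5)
We compute the dimension of $\mathcal Y_{T,B}^{H,w}$ by fibering it over the $G$-orbit in $G/B\times G/H$ that corresponds to $w$.

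First, consider the projection $p:\mathcal Y_{T,B}^{H,w}\to G/B\times G/H$ given by $(g,xB,yH)\mapsto (xB,yH)$. The condition $Bx^{-1}yH=BwH$ says exactly that the pair $(xB,yH)$ lies in the $G$-orbit $\mathcal O_w\subset G/B\times G/H$ of $(B,\wt wH)$, where $\wt w$ is a representative of $w$. This uses the standard bijection between $G$-orbits on $G/B\times G/H$ and double cosets $B\backslash G/H$.

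Next, the fibre of $p$ over $(xB,yH)$ is $xBx^{-1}\cap yHy^{-1}$. For a point $(x,\wt w)$ of $\mathcal O_w$ this is a conjugate of $B\cap \wt wH\wt w^{-1}$. The stabilizer of $(B,\wt wH)$ in $G$ is also $B\cap \wt wH\wt w^{-1}$. So every fibre of $p$ over $\mathcal O_w$ is isomorphic, as a variety, to $B_w:=B\cap \wt wH\wt w^{-1}$, and $p$ is a $G$-equivariant fibration onto the homogeneous space $\mathcal O_w\cong G/B_w$. Concretely, $\mathcal Y_{T,B}^{H,w}\cong G\times^{B_w}B_w$, where $B_w$ acts on itself by conjugation.

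It follows that
$$\dim \mathcal Y_{T,B}^{H,w}=\dim \mathcal O_w+\dim B_w=(\dim G-\dim B_w)+\dim B_w=\dim G.$$
Nothing here depends on sphericity. The only care needed is to make the dimension count rigorous for possibly non-reduced fibres. I would handle this by passing to reduced structures, which does not change dimension. I would then either use the isomorphism $G\times^{B_w}B_w\to \mathcal Y_{T,B}^{H,w}$, given by $[g,b]\mapsto (gbg^{-1},g\wt wH\text{-pair})$ after translating, or apply the fibre-dimension theorem to the surjective equivariant morphism $p$ with equidimensional fibres.

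The main obstacle is checking that the set-theoretic identification respects the locally closed subscheme structure on $\mathcal Y_{T,B}^{H,w}$. This is mild, because $\mathcal O_w$ is locally closed, being an orbit of an algebraic group action, and $\mathcal Y_{T,B}^{H,w}=p^{-1}(\mathcal O_w)$ inside $\mathcal Y_{T,B}^H$.
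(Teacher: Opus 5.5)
Your proposal is correct and follows essentially the same route as the paper. The paper also projects $\mathcal Y_{T,B}^{H,w}$ to $G/B\times G/H$, identifies the image with the $G$-orbit of $(eB,\wt wH)$, of dimension $\dim G-\dim(B\cap \wt wH\wt w^{-1})$, and observes that every fibre is a conjugate of $B\cap \wt wH\wt w^{-1}$. Your extra attention to reduced structures and the $G\times^{B_w}B_w$ description is harmless additional rigor.
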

\begin{proof}
    Let $p_{23}:\mathcal Y_{T,B}^{H,w}\to G/B \times G/H$ temporarily denote the projection to the last two factors. Let $w_0\in G$ be a representative of $w$. We see that the image of $p_{23}$ is the $G$-orbit of $(eB,w_0H)\in G/B\times\mathcal X$ with respect to the action $g \cdot (xB,yH)=(gxB,gyH)$. 
 Hence the image of $p_{23}$  is   of dimension $\dim (G)-\dim (B\cap w H w^{-1})$. Moreover, each nonempty fibre of $p_1$ is isomorphic to $B \cap w H w^{-1}$ (via some conjugation). Consequently, we see that $\dim (\mathcal Y_{T,B}^{H,w})=\dim (G)$.
\end{proof}

For convenience, we recall the following well-known lemma. (See \cite[Corollary 2.6]{Kno}.)
\begin{lem}
    The variety $\mathcal X=G/H$ contains only finitely many $B$-orbits. 
\end{lem}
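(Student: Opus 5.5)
The statement is that $\mathcal X=G/H$ has only finitely many $B$-orbits. The plan is to reduce to the fact that a spherical variety has finitely many orbits under a Borel subgroup, via the standard argument of Vinberg and Brion, adapted to arbitrary characteristic as in \cite{Kno}.

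\textbf{Step 1: rigidity of $B$-stable subvarieties.} Argue by contradiction. If there were infinitely many $B$-orbits, take $Z\in\mathfrak B(\mathcal X)$ minimal, with respect to inclusion, among closed irreducible $B$-stable subsets that contain infinitely many $B$-orbits. Noetherian induction ensures such a $Z$ exists. Its generic $B$-orbits then have codimension at least $1$ in $Z$, so the field $\mathrm k(Z)^B$ of $B$-invariant rational functions is nontrivial by Rosenlicht's theorem.

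\textbf{Step 2: contradiction with sphericity.} Show that every $Z\in\mathfrak B(\mathcal X)$ has an open $B$-orbit, i.e.\ $\mathrm k(Z)^B=\mathrm k$. This is the key step.

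First, use Knop's ``raising'' operation. For a simple reflection $s$ with minimal parabolic $P_s\supset B$ and $Z$ not $P_s$-stable, the set $P_sZ$ is again in $\mathfrak B(\mathcal X)$, and $\dim P_sZ=\dim Z+1$. Knop analyses the possible types of the action of $P_s$ (equivalently of the rank-one group $P_s/\mathrm{rad}$) on the $P_s$-stable variety $P_sZ$. This shows that the $B$-complexity does not decrease when passing from $Z$ to $P_sZ$; here the $B$-complexity means the transcendence degree of $\mathrm k(Z)^B$.

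Second, climb. Repeatedly raising by simple reflections, one reaches a $G$-stable member of $\mathfrak B(\mathcal X)$. This holds because $P_{s_1}\cdots P_{s_N}Z=\overline{Bw_GB}\,Z=GZ$ for a reduced word of the longest element $w_G$. Since $\mathcal X$ is a single $G$-orbit, that member is $\mathcal X$ itself. Therefore $c(Z)\le c(\mathcal X)=0$, which gives the open $B$-orbit in $Z$. This is exactly the content of \cite[Corollary 2.6]{Kno}, which rests on the same rank/complexity monotonicity invoked in Lemma \ref{lem-rankspherical}.

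\textbf{Step 3: conclusion.} Combine Steps 1 and 2. The minimal $Z$ from Step 1 has an open $B$-orbit $\mathcal O$. Its complement $Z\setminus\mathcal O$ has finitely many irreducible components. Each component lies in $\mathfrak B(\mathcal X)$, is strictly smaller than $Z$, and so by minimality carries only finitely many orbits. Hence $Z$ itself has finitely many orbits, contradicting the choice of $Z$.

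\textbf{Main obstacle.} The main obstacle is the monotonicity of complexity under raising in positive characteristic. In characteristic zero one would cite Vinberg's theorem directly. Here one must check the rank-one case analysis, including inseparability phenomena for $P_s\times^B Z\to P_sZ$. I would simply invoke \cite{Kno}, where this is done characteristic-free, as the paper already indicates.
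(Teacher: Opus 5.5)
Your proposal is correct and takes the same route as the paper. The paper proves nothing here; it simply cites \cite[Corollary 2.6]{Kno}, and your Noetherian-induction argument combined with raising along $P_{s_1}\cdots P_{s_N}=G$ is the standard argument behind that corollary.

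One remark on your ``main obstacle'': the complexity monotonicity you need is elementary and characteristic-free. When $P_sZ\neq Z$, the map $P_s\times^B Z\to P_sZ$ is generically finite, and $c(P_s\times^B Z)=c_{B\cap s^{-1}Bs}(Z)\ge c(Z)$. So no rank-one case analysis or inseparability check is required for this lemma.
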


In particular, we see that the set $ B(\mathrm k)\backslash G(\mathrm k)/H(\mathrm k)$ is finite. Combining the above two lemmas, we have the following proposition.

\begin{prop}\label{pro-dimYTBH} 
    The scheme $\mathcal Y_{T,B}^H$ is of dimension $\dim (G)$.
\end{prop}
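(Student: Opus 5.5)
The statement follows from Lemma \ref{lem-dimYTBHw} together with the finiteness of $B(\mathrm k)\backslash G(\mathrm k)/H(\mathrm k)$. The idea is that the pieces $\mathcal Y_{T,B}^{H,w}$ give a finite set-theoretic partition of $\mathcal Y_{T,B}^H$ into locally closed subsets, each of dimension exactly $\dim(G)$.

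The first step is to check that the pieces do partition $\mathcal Y_{T,B}^H$. For a point $(g,xB,yH)\in \mathcal Y_{T,B}^H$, the double coset $Bx^{-1}yH$ is well defined: changing $x$ to $xb$ and $y$ to $yh$ gives $b^{-1}x^{-1}yh$, which lies in the same double coset. Hence every point lies in exactly one $\mathcal Y_{T,B}^{H,w}$, and
\[
\mathcal Y_{T,B}^H=\bigsqcup_{w\in B(\mathrm k)\backslash G(\mathrm k)/H(\mathrm k)}\mathcal Y_{T,B}^{H,w}
\]
as sets of $\mathrm k$-points.

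The second step is to check that each piece is locally closed, so that it carries a reduced subscheme structure with a well-defined dimension. The piece $\mathcal Y_{T,B}^{H,w}$ is the preimage, under the projection $p_{23}:\mathcal Y_{T,B}^H\to G/B\times G/H$, of the diagonal $G$-orbit of $(eB,w_0H)$. The diagonal $G$-orbits on $G/B\times G/H$ correspond bijectively to the $B$-orbits on $\mathcal X$, and there are finitely many of these. A $G$-orbit of an algebraic action is locally closed, so each piece is locally closed. This is the only point needing any care, and it is standard.

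Finally, a scheme of finite type that is a finite union of locally closed subsets has dimension equal to the maximum of the dimensions of those subsets. By Lemma \ref{lem-dimYTBHw}, each piece has dimension $\dim(G)$, and the index set is finite and nonempty. Therefore $\dim \mathcal Y_{T,B}^H=\dim(G)$.
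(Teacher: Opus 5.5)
Your proposal is correct and follows the paper's own argument: the paper obtains the proposition by combining Lemma \ref{lem-dimYTBHw} with the finiteness of $B(\mathrm k)\backslash G(\mathrm k)/H(\mathrm k)$. Your verification that the pieces $\mathcal Y_{T,B}^{H,w}$ are locally closed (as preimages under $p_{23}$ of diagonal $G$-orbits) and partition $\mathcal Y_{T,B}^H$ makes explicit what the paper leaves implicit.
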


We have the following easy lemma.

\begin{lem}\label{lem-irreYTBHw}
Suppose that the necessary condition $\star$ holds for $H$ in the sense of Remark \ref{rm-changeBcondition}. 
    Then for any $w\in B(\mathrm k)\backslash G(\mathrm k)/H(\mathrm k)$, the scheme $\mathcal Y_{T,B}^{H,w}$ has a unique irreducible component of dimension $\dim (G)$.
\end{lem}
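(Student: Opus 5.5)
We analyse $\mathcal Y_{T,B}^{H,w}$ through the projection $p_{23}:\mathcal Y_{T,B}^{H,w}\to G/B\times\mathcal X$ from the proof of Lemma \ref{lem-dimYTBHw}. Fix a representative $w_0\in G(\mathrm k)$ of $w$ and let $\mathcal O_w:=G\cdot(eB,w_0H)\subset G/B\times \mathcal X$ be the image of $p_{23}$. The stabilizer of $(eB,w_0H)$ is $B_w:=B\cap w_0Hw_0^{-1}$, so $\mathcal O_w\cong G/B_w$, which is irreducible because $G$ is connected.

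The fibre of $p_{23}$ over $(xB,yH)$ is $\{g: x^{-1}gx\in B,\ y^{-1}gy\in H\}=xB_{w}'x^{-1}$, where $B'_w$ is the stabilizer of the chosen point (conjugate to $B_w$). This gives a $G$-equivariant identification
$$\mathcal Y_{T,B}^{H,w}\cong G\times^{B_w}B_w,$$
the associated bundle over $G/B_w$ with fibre $B_w$, where $B_w$ acts on itself by conjugation. Explicitly, the map is $(h,b)\mapsto (hbh^{-1},hB,hw_0H)$. This is an isomorphism onto $\mathcal Y_{T,B}^{H,w}$, at least on reduced structures, which suffices for irreducible components. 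Consequently the irreducible components of $\mathcal Y_{T,B}^{H,w}$ are the images of $G\times^{B_w}(B_w$-conjugation-stable unions of components of $B_w)$. In other words, they correspond bijectively to orbits of the conjugation action of $B_w$ on $\pi_0(B_w)$, which is trivial. So they are in bijection with $\pi_0(B_w)$, and each has dimension $\dim G-\dim B_w+\dim B_w=\dim G$.

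Hence it suffices to show that $B_w$ is connected for every $w$. This is exactly Corollary \ref{cor-connectedstabilizer}: under condition $\star$, the $B$-stabilizer $B_x$ of every point $x\in\mathcal X$ is connected. Applying it to $x=w_0H$ gives $B_x=B\cap w_0Hw_0^{-1}=B_w$. Therefore $\mathcal Y_{T,B}^{H,w}$ is irreducible of dimension $\dim G$, and in particular it has a unique irreducible component of dimension $\dim G$.

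The main point needing care is the bundle description. We must check that $G\times B_w\to \mathcal Y^{H,w}_{T,B}$, $(h,b)\mapsto (hbh^{-1},hB,hw_0H)$, is surjective. It is, since any $(g,xB,yH)$ with $Bx^{-1}yH=Bw_0H$ can be moved by $G$ to have $(xB,yH)=(eB,w_0H)$. This already suffices: the image of the irreducible variety $G\times B_w$ is irreducible. Hence irreducibility needs only surjectivity from the connected group $G\times B_w$, and the dimension follows from Lemma \ref{lem-dimYTBHw}.
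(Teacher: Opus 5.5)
Your argument is correct and is essentially the paper's. The paper also projects via $p_{23}$ onto the irreducible $G$-orbit of $(eB,w_0H)$, and uses Corollary \ref{cor-connectedstabilizer} to see that the fibres (conjugates of $B\cap w_0Hw_0^{-1}$) are irreducible of constant dimension. You instead finish by exhibiting $\mathcal Y_{T,B}^{H,w}$ as the image of the irreducible variety $G\times B_w$, which is slightly cleaner and even shows that $\mathcal Y_{T,B}^{H,w}$ is irreducible.

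The side claims in your middle paragraph are not needed and are not quite right, so you should drop them. First, the components of $G\times^{B_w}B_w$ correspond to conjugacy classes of $\pi_0(B_w)$, not to its elements: the conjugation action on $\pi_0(B_w)$ need not be trivial. Second, in characteristic $p$ the bijective map $G\times^{B_w}B_w\to\mathcal Y_{T,B}^{H,w}$ need not be an isomorphism even on reduced structures, because the scheme-theoretic stabilizer $B\cap w_0Hw_0^{-1}$ may be non-reduced.
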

\begin{proof}
    Let $p_{23}$ be as in the proof of Lemma \ref{lem-dimYTBHw}. Then we see the image is irreducible of dimension $\dim (G)-\dim (B\cap w Hw^{-1})$, and each fibre is likewise irreducible of dimension $\dim (B\cap w Hw^{-1})$ by Corollary \ref{cor-connectedstabilizer}.
\end{proof}

In view of the above lemma, the following definition makes sense.

\begin{defn}\label{def-Yirreduciblecomp}
    Suppose that the necessary condition $\star$ holds for $H$ in the sense of Remark \ref{rm-changeBcondition}. Fix a Borel pair $(B,T)$ of $G$.
    For $w\in B(\mathrm k)\backslash G(\mathrm k)/H(\mathrm k)$, we set $\eta_w$ to be the  generic point of the irreducible component of $\mathcal Y_{T,B}^{H,w}$ of dimension $\dim (G)$.
\end{defn}

\begin{prop}\label{pro-cohomo-directsum}
   Let $\mathscr F$ be a $\Qlb$-sheaf on $\mathcal Y_{T,B}^H$. Then we have a natural isomorphism 
    $$
   \bigoplus_{w\in B(\mathrm k)\backslash G(\mathrm k)/H(\mathrm k)} H^{2\dim (G)}_c(\mathcal Y_{T,B}^{H,w},i_w^*\mathscr F)  \cong H^{2\dim (G)}_c(\mathcal Y_{T,B}^H,\mathscr F).
    $$
\end{prop}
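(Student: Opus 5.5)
The plan is to use the finite stratification of $\mathcal Y_{T,B}^H$ by the locally closed pieces $\mathcal Y_{T,B}^{H,w}$ and the long exact sequences of compactly supported cohomology, combined with the vanishing of cohomology above twice the dimension. First, the pieces $\mathcal Y_{T,B}^{H,w}$ are the preimages under $(g,xB,yH)\mapsto Bx^{-1}yH$, which is the composite of $p_{23}$ with the quotient $G/B\times G/H\to B\backslash G/H$, of the $G$-orbits on $G/B\times\mathcal X$. Since $B(\mathrm k)\backslash G(\mathrm k)/H(\mathrm k)$ is finite, there are finitely many $G$-orbits on $G/B\times \mathcal X$. Each orbit is locally closed, and the closure of an orbit is a union of orbits. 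I therefore order the orbits $w_1,\dots,w_r$ so that each $Z_k:=\bigcup_{j\le k}\mathcal Y_{T,B}^{H,w_j}$ is closed, by adding at each step an orbit whose closure lies in what is already there, i.e.\ a minimal remaining one. Then $\mathcal Y_{T,B}^{H,w_k}$ is open in $Z_k$ with closed complement $Z_{k-1}$.

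Next, for each $k$ I use the excision triangle $j_!j^*\mathscr F|_{Z_k}\to \mathscr F|_{Z_k}\to i_*i^*\mathscr F|_{Z_k}\to$ (with $j$ the open inclusion of $\mathcal Y^{H,w_k}_{T,B}$ and $i$ the closed inclusion of $Z_{k-1}$). It gives the exact sequence
$$H^{2\dim G-1}_c(Z_{k-1},\mathscr F)\to H^{2\dim G}_c(\mathcal Y^{H,w_k}_{T,B},i_{w_k}^*\mathscr F)\to H^{2\dim G}_c(Z_k,\mathscr F)\to H^{2\dim G}_c(Z_{k-1},\mathscr F)\to H^{2\dim G+1}_c(\mathcal Y^{H,w_k}_{T,B},\mathscr F)=0.$$
The last term vanishes because, by Lemma \ref{lem-dimYTBHw}, every stratum has dimension $\dim G$, and a constructible sheaf on a scheme of dimension $d$ has no $H^i_c$ for $i>2d$.

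The key point is the left end of this sequence. Here I would not try to prove that $H^{2\dim G-1}_c(Z_{k-1},\mathscr F)$ vanishes, since it need not. Instead I would show the left map is zero, or more naturally that the sequence splits. The cleanest approach is to pass to the open dense part where all strata are top-dimensional. Let $U\subset\mathcal Y_{T,B}^H$ be the complement of the union of all irreducible components of the strata closures of dimension $<\dim G$, together with the pairwise intersections of closures of distinct $\dim G$-dimensional components. The intersections have dimension $<\dim G$, since distinct irreducible components of the same dimension meet in lower dimension. Removing a closed set of dimension $\le \dim G-1$ does not change $H^{2\dim G}_c$, because the relevant $H^{2\dim G-1}_c$ and $H^{2\dim G}_c$ of the removed set vanish; this is the same argument as in the proof of Lemma \ref{lem-tpduality}. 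The same removal works on each stratum. On $U$, the strata $U\cap\mathcal Y^{H,w}_{T,B}$ are pairwise disjoint and each is open and closed in $U$, because we removed all the closure intersections. So $H^{2\dim G}_c(U,\mathscr F)=\bigoplus_w H^{2\dim G}_c(U\cap \mathcal Y^{H,w}_{T,B},\mathscr F)$, and each summand equals $H^{2\dim G}_c(\mathcal Y^{H,w}_{T,B},i_w^*\mathscr F)$. The resulting isomorphism is induced by the natural maps $H_c(\text{open})\to H_c$, which makes it natural.

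The main obstacle is the bookkeeping in showing that removing the lower-dimensional closed subsets commutes with restriction to strata. In particular, one must check that a stratum's lower-dimensional locus sits inside a closed subset of $\mathcal Y_{T,B}^H$ of dimension $<\dim G$, so that the removed set is genuinely closed. I would take the removed set to be the union of the closures (in $\mathcal Y_{T,B}^H$) of the lower-dimensional components and of the boundaries $\overline{\mathcal Y^{H,w}_{T,B}}\setminus\mathcal Y^{H,w}_{T,B}$. These boundaries have dimension $<\dim G$ because each stratum is locally closed. Taking the boundaries as well automatically separates the strata within $U$.
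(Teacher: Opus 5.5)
Your argument is correct and is essentially the paper's proof. The paper likewise discards a closed subset of dimension at most $\dim G-1$ to obtain an open $V\subset\mathcal Y_{T,B}^H$ that is a disjoint union of pieces $V_w$, each open in both $\mathcal Y_{T,B}^{H,w}$ and $\mathcal Y_{T,B}^H$, and then concludes from the two excision triangles together with the vanishing of $H^i_c$ above twice the dimension. Your removal of the boundaries $\overline{\mathcal Y_{T,B}^{H,w}}\setminus\mathcal Y_{T,B}^{H,w}$ is a clean justification of the paper's unexplained step ``we may shrink $V_w$ and assume that $V_w$ is likewise open in $\mathcal Y_{T,B}^H$''. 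The filtration and long exact sequences in your first two paragraphs are not needed for the final argument.
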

\begin{proof}
    For each $w\in B(\mathrm k)\backslash G(\mathrm k)/H(\mathrm k)$, let $V_w$ temporarily be a $\dim (G)$-dimensional  open subset of $\mathcal Y_{T,B}^{H,w}$ such that it contains  the generic points of all irreducible components of $\mathcal Y_{T,B}^{H,w}$ of dimension $\dim(G)$. We may shrink $V_w$ and assume that $V_w$ is likewise open as a subscheme of $\mathcal Y_{T,B}^H$. Moreover, we may assume that for different $w_1,w_2\in B(\mathrm k)\backslash G(\mathrm k)/H(\mathrm k)$, we have $V_{w_1}\cap V_{w_2}=\emptyset$.
    Let $V$  be the disjoint union of all $V_w$ for $w\in B(\mathrm k)\backslash G(\mathrm k)/H(\mathrm k)$. Let $j_w:V_w\to \mathcal Y_{T,B}^{H,w}$ and $j:V\to \mathcal Y_{T,B}^H$ be the open immersions. The distinguished triangle (induced by the adjunction counit $(j_w)_!\circ j_w^* \to \mathrm {id}$)
    $$
   (j_w)_!\circ j^*_w \circ i_w^* \mathscr F \to i_w^* \mathscr F \to ?
    $$
    yields $H_c^{2\dim (G)}(V_w, j^*_w \circ i_w^* \mathscr F)\cong  H^{2\dim (G)}_c(\mathcal Y_{T,B}^{H,w},i_w^*\mathscr F)$ by  Lemma \ref{lem-dimYTBHw} (and the obvious vanishing of $H^{i}_c(\mathcal Y_{T,B}^H,?)$ for $i\geq 2\dim (G)-1$, since the support of $?$ is of dimension $\leq \dim (G)-1$).
  Similarly,   the distinguished triangle (induced by the adjunction counit $j_!\circ j^*\to \mathrm {id}$)
    $$
    j_!\circ j^* \mathscr F \to  \mathscr F \to ??
    $$
    yields $H_c^{2\dim (G)}(V,j^* \mathscr F)\cong H^{2\dim (G)}_c(\mathcal Y_{T,B}^H,\mathscr F)$. By construction, we have the obvious isomorphism $H^{2\dim (G)}_c(V,j^{*}\mathscr F)=\bigoplus_{w\in B(\mathrm k)\backslash G(\mathrm k)/H(\mathrm k)} H^{2\dim (G)}_c(V_w,j_w^*\circ i_w^*\mathscr F)$. This completes the proof.
\end{proof}

\subsection{Passing to the subscheme of unipotent elements}
Retain the notation as in the previous subsection. In this subsection we will show that the necessary condition $\star$ introduced in Remark \ref{rm-changeBcondition} implies the almost multiplicity-one property.

We need some preliminaries.

\begin{defn} Fix a Borel pair $(B,T)$ of $G$.
  Fix $w\in B(\mathrm k)\backslash G(\mathrm k)/H(\mathrm k)$. We   set $T_w:=\mathrm d_T^B(B\cap w Hw^{-1})$ endowed with the reduced scheme structure, where $\mathrm d_T^B:B\to T$ is the natural map.  Let $r_w:T_w\hookrightarrow T$ be the inclusion.
\end{defn}

Recall the isomorphism $\mathscr R_{T,\chi}\otimes \mathscr I_H\cong (\tau_{T,B}^H)_! \rho_{T,B,H}^* \mathscr L_\chi $ introduced in Remark \ref{rm-badmodel}.
\begin{lem}\label{lem-parition-vanishing}Fix a Borel pair $(B,T)$ of $G$, with $F$-stable $T$.
   Fix $w\in B(\mathrm k)\backslash G(\mathrm k)/H(\mathrm k)$.  Let $\chi:T^F\to \Qlb^\times$ be a character.
    Suppose that the necessary condition $\star$ holds for $H$ in the sense of Remark \ref{rm-changeBcondition}.  Then $H^{k}_c(\mathcal Y_{T,B}^{H,w},i^*_w \rho^*_{T,B,H} \mathscr L_ \chi)=0 $ for all integers $k$,  unless $r_w^* \mathscr L_\chi$ is isomorphic to the constant sheaf $\Qlb$.
\end{lem}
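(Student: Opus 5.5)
We will show the vanishing by fibering $\mathcal Y_{T,B}^{H,w}$ over the $G$-orbit of $(eB,w_0H)$ in $G/B\times\mathcal X$ and isolating a torus factor carrying $r_w^*\mathscr L_\chi$. Fix a representative $w_0\in G(\mathrm k)$ of $w$, set $B_w:=B\cap w_0Hw_0^{-1}$ with the reduced structure, and let $\mathrm d:=\mathrm d_T^B$.

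\emph{Step 1: a product description.} Consider the map
$$
\Psi: G\times B_w\to \mathcal Y_{T,B}^{H,w},\qquad (x,b)\mapsto (xbx^{-1},xB,xw_0H).
$$
It is surjective. Indeed, given $(g,xB,yH)$ with $Bx^{-1}yH=Bw_0H$, we may change the representative $x$ so that $yH=xw_0H$. Then $x^{-1}gx\in B\cap w_0Hw_0^{-1}$. The group $B_w$ acts freely on $G\times B_w$ by $c\cdot(x,b)=(xc^{-1},cbc^{-1})$, and the fibres of $\Psi$ are exactly the $B_w$-orbits. Hence $\mathcal Y_{T,B}^{H,w}\cong G\times^{B_w}B_w$, where $B_w$ acts on itself by conjugation. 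We may need to pass to reduced structures here, which is harmless for \'etale cohomology. Under this identification, $\rho_{T,B,H}\circ i_w\circ\Psi(x,b)=\mathrm d(b)$. Since $T$ is commutative, $b\mapsto\mathrm d(b)$ is conjugation-invariant, so the pullback of $\mathscr L_\chi$ to $G\times B_w$ is $\mathrm{pr}_2^*\,\mathrm d^*r_w^*\mathscr L_\chi$, where we regard $\mathrm d:B_w\to T_w$.

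\emph{Step 2: factor through $T_w$.} Condition $\star$ and Corollary \ref{cor-connectedstabilizer} show that every $B$-stabilizer on $\mathcal X$ is connected, so $B_w$ is a connected solvable group. Its image $T_w$ is therefore a connected subgroup of $T$, i.e.\ a subtorus, possibly trivial. The map $\mathrm d:B_w\to T_w$ is a surjective homomorphism of connected solvable groups whose kernel $U_w$ is unipotent. Choose a maximal torus $S\subset B_w$. Then $\mathrm d|_S:S\to T_w$ is an isogeny, and $B_w=U_w\rtimes S$.

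Next we compute $H_c^\bullet(\mathcal Y_{T,B}^{H,w},\cdot)$ via the quotient map $q:\mathcal Y_{T,B}^{H,w}\to G/B_w$, which is a locally trivial fibration with fibre $B_w$ (locally trivial for the \'etale topology). The fibrewise sheaf is $\mathrm d^*r_w^*\mathscr L_\chi$ on $B_w\cong U_w\times S$, as varieties. Its pullback to $U_w\times S$ is $\mathrm{pr}_S^*(\mathrm d|_S)^*r_w^*\mathscr L_\chi$. By Künneth,
$$
H^\bullet_c(B_w,\mathrm d^*r_w^*\mathscr L_\chi)\cong H_c^\bullet(U_w,\Qlb)\otimes H^\bullet_c\big(S,(\mathrm d|_S)^*r_w^*\mathscr L_\chi\big).
$$
By Remark \ref{rm-kummersheaf}, the sheaf $r_w^*\mathscr L_\chi$ is a tame Kummer-type rank-one local system on $T_w$, i.e.\ an element of $\mathfrak L(T_w)$. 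Its pullback along the isogeny $\mathrm d|_S$ lies in $\mathfrak L(S)$, and that pullback is nonconstant whenever $r_w^*\mathscr L_\chi$ is nonconstant, provided the transpose of the isogeny is injective on $\mathfrak L$.

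\emph{Step 3: conclude.} If $r_w^*\mathscr L_\chi\not\cong\Qlb$, Remark \ref{rm-vanishin-tori} gives vanishing of the second Künneth factor, so $R q_!$ of our sheaf vanishes stalkwise by proper base change. Hence all $H^k_c(\mathcal Y_{T,B}^{H,w},i^*_w \rho^*_{T,B,H} \mathscr L_ \chi)$ vanish.

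The expected obstacle is the injectivity claim in Step 2: pulling back along an isogeny can kill a Kummer sheaf when the kernel has order prime to $p$. To avoid this, we would take $S=T_w$ itself, using that $\mathrm d$ restricted to $T\cap B_w$ need not be surjective. Alternatively, we can argue directly on $B_w\to T_w$: it is a fibration with unipotent connected fibres $U_w$, hence affine-space-like with $R\mathrm d_!\Qlb\cong\Qlb[-2\dim U_w](-\dim U_w)$. The projection formula then gives
$$
H^\bullet_c(B_w,\mathrm d^*r_w^*\mathscr L_\chi)\cong H^{\bullet-2\dim U_w}_c(T_w,r_w^*\mathscr L_\chi),
$$
which vanishes by Remark \ref{rm-vanishin-tori} and bypasses the isogeny issue entirely. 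This is the route we would actually follow.
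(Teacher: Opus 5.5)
Your final route is essentially the paper's. You fibre $\mathcal Y_{T,B}^{H,w}$ over the $G$-orbit (your $q$, the paper's $p_{23}$ with equivariance and proper base change) to reduce to the fibre $B_w$, push forward along $\mathrm d\colon B_w\to T_w$ with connected unipotent kernel, and conclude by vanishing for a nonconstant Kummer sheaf on the torus $T_w$, which is the paper's alternative ending via Remark~\ref{rm-vanishin-tori}. One correction: the isogeny worry is unfounded, because a torus $S\subset B_w$ meets the unipotent radical of $B$ trivially, so $\mathrm d|_S\colon S\to T_w$ is an isomorphism — and this is exactly what shows $\ker(\mathrm d|_{B_w})$ is the connected unipotent radical of $B_w$, the fact your final route needs.
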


\begin{proof}
    We note that the sheaf $i^*_w \rho^*_{T,B,H} \mathscr L_ \chi$ is equivariant with respect to the action of $G$ on $\mathcal Y_{T,B}^{H,w}$ given by (for $g\in G$ and $(g',xB,yH)\in \mathcal Y_{T,B}^{H,w}$)
    $$g\cdot (g',xB,yH)=(gg'g^{-1},gxB,gyH).$$
    Set $\mathcal O_w$ to be the $G$-orbit of $(eB,w_0H)\in G/B\times G/H$ with respect to the diagonal action, where $w_0\in G$ is a representative of $w$.
     We have a $G$-equivariant map $p_{23}:\mathcal Y_{T,B}^{H,w}\to G/B\times G/H$ given by projection to the last two factors. The image of $p_{23}$ is $\mathcal O_w$.

Let  $\wt w=(eB,w_0H)$. Let $f_{\wt w}=p_{23}^{-1}(\{\wt w\})$ be the fibre of $p_{23}$ along $\wt w$. Let $c_w:f_{\wt w}\hookrightarrow \mathcal Y_{T,B}^{H,w}$ be the closed immersion.
  By the above discussion and proper base change,  it suffices to show:
     \begin{itemize}
         \item[$\heartsuit_1$] Suppose that $r^*_w \mathscr L_\chi$ is not isomorphic to the constant sheaf $\Qlb$ as $\Qlb$-sheaves, then $H^{k}_c(f_{\wt w},c_w^* \circ i^*_w \circ \rho^*_{T,B,H} \mathscr L_ \chi)=0$ for all integers $k$.
     \end{itemize}

We note that the map $\rho_{T,B,H}\circ i_w\circ c_w:f_{\wt w} \to T$ is the restriction of $\mathrm d_T^B$ to $B\cap w_0 H w_0^{-1}$ when we identify $f_{\wt w}$ with $B\cap w_0 H w_0^{-1}$ in an obvious way. Set $B_{w_0}$ to be the closed subscheme of  $B\cap w_0 Hw_0^{-1}$ with the same underlying topological space and the reduced scheme structure.
Let $\mathrm d_T^{B,w_0}:B_{w_0}\to  T_w$ be the  restriction map of $\mathrm d_T^B$. Then $\mathrm d_T^{B,w_0}$ is a surjective homomorphism of algebraic groups, whose kernel is a connected unipotent group by Corollary \ref{cor-connectedstabilizer}.

 In particular, by the projection formula, we see that $H^\bullet_c(f_{\wt w},c_w^* \circ i^*_w \circ \rho^*_{T,B,H} \mathscr L_ \chi)$ can be identified with $H^{\bullet}_c(T_w,r^*_w \mathscr L_ \chi)$ up to shifts and Tate twists.
Consequently, to show $\heartsuit_1$, it suffices to prove:
\begin{itemize}
    \item [$\heartsuit_2$] Suppose that $r^*_w \mathscr L_\chi$ is not isomorphic to $\Qlb$ as $\Qlb$-sheaves, then $H^{k}_c(T_w,r^*_w \mathscr L_ \chi)=0$ for all integers $k$.
\end{itemize}

In what follows, we assume that $r^*_w\mathscr L_\chi$ is not isomorphic to the constant sheaf. 
We want to show $H^{k}_c(T_w,r_w^* \mathscr L_ \chi)=0$ for all integers $k$.
We note by Corollary \ref{cor-connectedstabilizer} that $T_w$ is connected. Hence $T_w$ is a torus.

We may choose a sufficiently divisible integer $n$ such that $T_w$ is $F^n$-stable. Set $\chi_w: T_w^{F^n} \to \Qlb$ to be the restriction of $\chi\circ\mathrm N_{T}^n:T^{F^n}\to\Qlb^\times$ to $T^{F^n}_w$. 
We have an isomorphism $r^*_w\mathscr L_\chi^T \cong \mathscr L_{\chi_w}^{T_w}$ (of sheaves endowed with  $(F^n)^*$ actions) in view of the explicit description of the corresponding functions Remark \ref{rm-torishf} and Lemma \ref{lem-chebo-tori} (applied to the torus $T_w$ defined over $\mathbb F_{q^n}$). 
(Here, the sheaf $\mathscr L_{\chi_w}^{T_w}$ is obtained in Remark \ref{rm-gentorshf} for the datum $n,T_w,\chi_w$.)
Then $\chi_w$ is nontrivial since $r^*_w\mathscr L_\chi^T\cong \mathscr L_{\chi_w}^{T_w}$ is not isomorphic to the constant sheaf. Hence we have $H^{k}_c(T_w,r_w^* \mathscr L_ \chi)=0$ for all integers $k$ by Lemma \ref{lem-vanis-tori}. This completes the proof.

Alternatively, we may use the map introduced in Remark \ref{rm-kummersheaf} and the vanishing  introduced in Remark \ref{rm-vanishin-tori} to see $\heartsuit_2$. 
\end{proof}

\begin{defn}\label{def-not-open-orbit}
    For a Borel pair $(B,T)$ of $G$, we set $$\Theta_{T,B}:=\{w\in B(\mathrm k)\backslash G(\mathrm k)/H(\mathrm k):\text{$B\cdot wH $ is \textbf{not} the open dense $B$-orbit of $\mathcal X=G/H$}\}.$$
\end{defn}

\begin{prop}\label{pro-vanishing-estimation}
    Suppose that the necessary condition $\star$ holds for $H$ in the sense of Remark \ref{rm-changeBcondition}, then there exists a positive integer  $N_{G,H}$ (which is constructed explicitly in the proof) satisfying the following:
    \begin{itemize}
        \item 
 Choose an arbitrary positive integer $n$, and choose any Borel pair $(B,T)$ of G with $F^n$-stable $T$. 
    Then we have
\begin{align*}
    &\# \{\chi \in (T^{F^n})^\vee: \text{$r_w^* \mathscr L_{\chi}^T$ is not isomorphic to the constant sheaf $\Qlb$ for all $w\in \Theta_{T,B}$. }\}\\
    \geq & (1-N_{G,H}\frac{(q^n+1)^{\dim (T)-1}}{(q^n-1)^{\dim(T)}})\# ((T^{F^n})^\vee).
\end{align*}
    \end{itemize}
    Here, for $n,T,B,\chi$ as listed above, the sheaf $\mathscr L_\chi^T$ is obtained in Remark \ref{rm-gentorshf}.  
\end{prop}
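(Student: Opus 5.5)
The plan is to translate the sheaf condition into a condition on characters of the dual torus, and then apply Lusztig's counting lemma \cite[Lemma 8.2]{L5}, as in the proof of Proposition \ref{pro-dl-vs-irrerep}.

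Fix $n$ and a Borel pair $(B,T)$ with $F^n$-stable $T$, and let $w\in\Theta_{T,B}$. Under condition $\star$, Corollary \ref{cor-connectedstabilizer} shows that $T_w$ is a subtorus of $T$; it is connected because $B\cap wHw^{-1}$ is. By Corollary \ref{cor-theBorbit}, the unique orbit with finite $\mathrm d_T^B$-image is the open one. So for $w\in\Theta_{T,B}$, condition $i)_B$ forces $\dim T_w\geq 1$.

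By Remark \ref{rm-kummersheaf}, $r_w^*\mathscr L_\chi^T$ is constant exactly when $\lambda_T(\mathscr L^T_\chi)\in T^*(\mathrm k)$ lies in the kernel $K_w$ of the transpose map $T^*\to T_w^*$. Since $T_w$ is a nontrivial subtorus, $K_w$ is a closed subgroup of $T^*$ of dimension $\dim T-\dim T_w<\dim T$. By Remark \ref{rm-gentorshf}, the assignment $\chi\mapsto\lambda_T(\mathscr L^T_\chi)$ is a bijection $(T^{F^n})^\vee\to (T^*)^{F^n}$. The characters to exclude therefore correspond exactly to the $F^n$-points of
\[
X_{T,B}:=\bigcup_{w\in\Theta_{T,B}}K_w\subset T^*,
\]
a proper closed subscheme whose components are translates of proper subtori. (Here $X_{T,B}$ is $F^n$-stable after enlarging it by its finitely many Frobenius translates, exactly as in the definition of $X_{\mathrm T^*}$; even without this, a point count on the reduced union suffices.) Lusztig's estimate \cite[Lemma 8.2]{L5} then gives
\[
\#X_{T,B}^{F^n}\leq \mathbf N(T^*,X_{T,B})\frac{(q^n+1)^{\dim T-1}}{(q^n-1)^{\dim T}}\#(T^*)^{F^n},
\]
and this produces the displayed inequality with $N_{G,H}\geq\mathbf N(T^*,X_{T,B})$.

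The main obstacle is uniformity: $N_{G,H}$ must not depend on $n$, nor on the pair $(B,T)$ or its Frobenius structure. First, I note that $\mathbf N(T^*,X_{T,B})$ is a purely geometric invariant of the closed subscheme $X_{T,B}$ of the torus $T^*$ over $\mathrm k$, not of any Frobenius. Second, all Borel pairs of $G$ are $G(\mathrm k)$-conjugate. If $g$ conjugates $(B,T)$ to $(B',T')$, then $g$ induces a bijection of double cosets $w\mapsto gw$ that preserves openness of orbits and carries $T_w$ to $T'_{gw}$ via $\mathrm{ad}\,g$; hence it carries $X_{T,B}$ isomorphically to $X_{T',B'}$. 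The number $\mathbf N(T^*,X_{T,B})$ is therefore independent of $(B,T)$, and I would define $N_{G,H}$ to be this common value, or $1$ if $\Theta_{T,B}$ is empty.

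The one care point is the $F^n$-stability used in Lemma 8.2. I would handle it by replacing $X_{T,B}$ with the union of its $F^{n}$-translates. Since $F^n$ permutes the $B$-orbits only up to changing the Borel pair to $(F^n(B),T)$, and $\Theta$ is intrinsically defined, this union is still a union of kernels $K_{w'}$ for $w'$ in $\Theta_{T,F^n(B)}$. To bound these uniformly, I would take $N_{G,H}$ to be $\mathbf N$ of the union over all Borels containing $T$, which is a finite union since there are $\#W(T)$ such Borels and this number is independent of $T$.
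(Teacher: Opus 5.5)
Your proposal is correct and follows essentially the same route as the paper. You identify the excluded characters, via $\lambda_T$ and the diagram in Remark \ref{rm-kummersheaf}, with the $F^n$-points of a union of kernels of the transposes $T^*\to T_w^*$, which are proper subtori by $\star$ and Corollary \ref{cor-connectedstabilizer}. You then enlarge this union to an $F^n$-stable set whose invariant $\mathbf N$ does not depend on $n$ or $(B,T)$, and apply \cite[Lemma 8.2]{L5}. Your union over all Borel subgroups containing $T$ is exactly the $W(T)$-saturation that the paper builds into $X_{\mathrm T,\mathrm B}$, together with $F$-stability, for a fixed $F$-stable pair $(\mathrm B,\mathrm T)$; the paper then transports it to $T$ by the conjugation $c^t_{T,B}$.
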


\begin{proof}

In this paragraph, we fix a positive integer $k$ and a Borel pair $(B,T)$ with $F^k$-stable $T$.
Let $\mathrm t_w:T^*(\mathrm k)\to T_w^*(\mathrm k)$ be the transpose map of $r_w$ for all $w\in B(\mathrm k)\backslash G(\mathrm k)/H(\mathrm k)$. 
 Set $X_{T,B,k}$ to be the minimal reduced subscheme of $T^*$ that is stable under the endomorphism $F^k$ and contains $\ker{(\mathrm t_w)}$ for any 
   $w\in \Theta_{T,B}$. Note that $X_{T,B,k}$ is indeed a finite union of subtori (of $T^*$) of dimension $\leq \dim (T)-1$ by the assumption introduced in Remark \ref{rm-changeBcondition} and the connectedness introduced in  Corollary \ref{cor-connectedstabilizer}. Further, the scheme $X_{T,B,k}$ is defined over $\mathbb F_{q^k}$. 

In the remainder of this proof, we fix an $F$-stable  Borel pair $(\mathrm B,\mathrm T)$ of $G$.
Let $\mathrm t^\circ_v:\mathrm T^*(\mathrm k)\to \mathrm T_v^*(\mathrm k)$ be the transpose map of $r_v$ for any $v\in \mathrm B(\mathrm k)\backslash G(\mathrm k)/H(\mathrm k)$.    Set $X_{\mathrm T,\mathrm B}$ to be the minimal reduced closed subscheme of $T^*$ that is stable under:
\begin{itemize}
    \item the endomorphism $F$, and
    \item the action of the Weyl group $W(\mathrm T)$;
\end{itemize}
  and that contains $\ker{(\mathrm t_v^\circ)}$ for every
   $v\in \Theta_{\mathrm T,\mathrm B}$. Note that $X_{\mathrm T,\mathrm B}$ is indeed a finite union of subtori (of $\mathrm T^*$) of dimension $\leq \dim (\mathrm T)-1$ by the assumption introduced in Remark \ref{rm-changeBcondition} and the connectedness introduced in  Corollary \ref{cor-connectedstabilizer}. 

 In this paragraph, let $k$ and $(B,T)$  be as in the first paragraph of this proof. We may choose $g_{T,B}\in G(\mathrm k)$ such that $g_{T,B}^{-1} \mathrm T g_{T,B}=T$ ($g_{T,B}^{-1} \mathrm B g_{T,B}=B$, resp.). 
 Set $c_{T,B}: T\to \mathrm T$ to be the isomorphism defined by $t\mapsto g_{T,B}tg_{T,B}^{-1} $. Let $c^t_{T,B}:\mathrm T^*\to  T^*$ be the transpose of $c_{T,B}$. In view of the explicit relation between  Frobenius structures on $T$ and $\mathrm T$ (see \cite[Corollary 1.14]{DL} and what follows), we see that $X_{T,B,k}$ is a subscheme of $c^t_{T,B} (X_{\mathrm T,\mathrm B})$, and that the scheme $c^t_{T,B} (X_{\mathrm T,\mathrm B})$ is $F^k$-stable. Consequently, by \cite[Lemma 8.2]{L5}, we have 
  $$
   \# ((T^*)^{F^k}-X^{F^k}_{T,B,k})\geq \# ((T^*)^{F^k}-(c^t_{T,B}(X_{\mathrm T,\mathrm B}))^{F^k}) \geq 
   (1-\mathbf N(\mathrm T^*,X_{T,B})\frac{(q^k+1)^{\dim (T)-1}}{(q^k-1)^{\dim(T)}})\# ((T^*)^{F^k}).
   $$
(See Remark \ref{rm-numberNTX} for the definition of the number $\mathbf N(\_,\_)$, notice that $\mathbf N(\mathrm T^*,X_{\mathrm T,\mathrm B})=\mathbf N(T^*,c^t_{T,B}(X_{\mathrm T,\mathrm B})) $ by definition.)
    In view of the commutative diagram introduced in Remark  \ref{rm-kummersheaf}, the above inequality indicates
   \begin{align*}
       &\# \{\chi \in (T^{F^k})^\vee: \text{$r_w^* \mathscr L_{\chi}^T$ is not isomorphic to the constant sheave $\Qlb$ for $w\in \Theta_{T,B}$. }\} \\&\geq (1-\mathbf{N}(\mathrm T^*,X_{\mathrm T,\mathrm B})\frac{(q^k+1)^{\dim (T)-1}}{(q^k-1)^{\dim(T)}})\# ((T^*)^{F^k}).
   \end{align*}

We set $N_{G,H}:=\mathbf N(\mathrm T^*,X_{\mathrm T,\mathrm B})$ and complete the proof.
\end{proof}

We can now state the main result of this subsection.

\begin{thm}\label{thm-almostmulone}
    Suppose that the necessary condition $\star$ holds for $H$ in the sense of Remark \ref{rm-changeBcondition}. Then the almost muptiplicity-one property holds for $H$: that is, we have
    $$
    \lim_{n\to \infty} \frac{\#\mathcal E^{\diamondsuit}(G,H,n)}{\#\mathcal E(G,n)}=1.
    $$
    (See Definition \ref{def-almostmulone}.)
\end{thm}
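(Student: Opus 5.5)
The plan is to reduce to regular Deligne--Lusztig characters and compute their multiplicities through the top cohomology of $\mathcal Y_{T,B}^H$.

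\textbf{Step 1 (reduction).} By Proposition \ref{pro-dl-vs-irrerep}, almost all irreducible representations of $G^{F^n}$ are, up to sign, regular Deligne--Lusztig characters $R^{(n)}_{T,\chi}$. So it suffices to show two things: first, that for $q^n\geq N_G$ we have $\langle R^{(n)}_{T,\chi},1_{H^{F^n}}\rangle=\pm 1$ for all regular $\chi$ outside an exceptional set; second, that the exceptional set has proportion tending to $0$.

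For the proportion, I would argue torus by torus, as in the proof of Proposition \ref{pro-dl-vs-irrerep}. For each $F^n$-stable $T$, the number of regular Deligne--Lusztig characters attached to $T$ is roughly $\#(T^{F^n})^\vee/\#W(T)^{F^n}$, and each comes from at most $\#W$ characters. It is therefore enough to bound the bad $\chi\in (T^{F^n})^\vee$ by a proportion $O(q^{-n})$, uniformly in $T$. The sign is dealt with at the end.

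\textbf{Step 2 (computing the multiplicity).} Fix an $F^n$-stable maximal torus $T$ and a Borel subgroup $B\supset T$; the Borel need not be $F^n$-stable. Theorem \ref{thm-pertopcohomo}, applied over $\mathbb F_{q^n}$ with $\chi$ of $T^{F^n}$, gives $\mathrm P$ as $q^{-n\dim G}$ times the trace of Frobenius on $H^{2\dim G}_c(G,\mathscr R_{T,\chi}\otimes\mathscr I_H)$. Remark \ref{rm-badmodel} identifies this space, as $\Qlb$-spaces, with $H^{2\dim G}_c(\mathcal Y_{T,B}^H,\rho^*\mathscr L_\chi)$. Proposition \ref{pro-cohomo-directsum} then splits it as a sum over $w\in B\backslash G/H$.

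Now take $\chi$ in the good set of Proposition \ref{pro-vanishing-estimation}, so that $r_w^*\mathscr L_\chi$ is nonconstant for every $w\in\Theta_{T,B}$. By Lemma \ref{lem-parition-vanishing}, every summand with $w\in\Theta_{T,B}$ vanishes. Only the open orbit $w_{\mathrm{op}}$ survives; by Corollary \ref{cor-theBorbit} this is the $w$ of condition $\star$.

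For the open orbit, $B\cap w_{\mathrm{op}}Hw_{\mathrm{op}}^{-1}$ is connected unipotent, so $T_{w_{\mathrm{op}}}$ is trivial and $r^*\mathscr L_\chi$ is constant. The proof of Lemma \ref{lem-parition-vanishing} fibres $\mathcal Y^{H,w_{\mathrm{op}}}_{T,B}$ over the orbit $\mathcal O_{w_{\mathrm{op}}}$. The orbit is irreducible of dimension $\dim G-\dim U_w$, and the fibres are affine spaces of dimension $\dim U_w$ on which $\rho^*\mathscr L_\chi$ is constant, because $\mathrm d$ is trivial there. So this piece is irreducible of dimension $\dim G$ with trivial sheaf, and its top cohomology is one-dimensional.

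\textbf{Step 3 (the Frobenius eigenvalue — the main obstacle).} Since $B$ may not be $F^n$-stable, the Weil structure on this line is not directly visible. I would handle this in two ways.

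First, the trace is a root of unity times $q^{n\dim G}$ by Corollary \ref{cor-cohomoweight}. It is also an integer multiplicity by Theorem \ref{thm-pertopcohomo}, up to the factor $q^{n\dim G}$. A root of unity that is an integer is $\pm1$, so $\mathrm P_{T,\chi}(n)=\pm1$. The value $-1$ is impossible, since multiplicities of the genuine representation $\pm R$ are nonnegative up to that same global sign.

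Second, to confirm that the value is $\pm1$ and not something else, I would transport to an $F$-stable Borel pair via the Weyl element relating $T$ to $\mathrm T$. The sign-twisted character is then a genuine irreducible $\pi$, and $\langle\pi,1\rangle=(-1)^{\sigma(G)+\sigma(T)}\mathrm P=1$, with the signs matching because the multiplicity is nonnegative. Justifying the irreducibility of the top cohomology piece and the sign bookkeeping is where the care is needed.

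\textbf{Step 4 (conclusion).} Combine with Proposition \ref{pro-vanishing-estimation}: the exceptional set has density at most $N_{G,H}(q^n+1)^{\dim T-1}/(q^n-1)^{\dim T}\to0$, uniformly over the $F^n$-stable tori. Together with Step 1, this gives that the limit equals $1$.
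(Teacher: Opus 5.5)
Your proposal is correct and follows essentially the same route as the paper. Proposition \ref{pro-vanishing-estimation} and Lemma \ref{lem-parition-vanishing} kill every summand except the open orbit, and that summand is one-dimensional. Corollary \ref{cor-cohomoweight}, integrality, and the sign from Proposition \ref{regulardl-irr} then force $\langle(-1)^{\sigma(G)+\sigma(T)}R^{(n)}_{T,\chi},1_{H^{F^n}}\rangle_{H^{F^n}}=1$, and Proposition \ref{pro-dl-vs-irrerep} finishes the argument.

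Two small corrections. The second argument in your Step 3 (transporting to an $F$-stable Borel pair) is unnecessary, and no irreducibility of the top cohomology needs justifying. Also, ``the value $-1$ is impossible'' should be said of $(-1)^{\sigma(G)+\sigma(T)}\mathrm P_{T,\chi}(n)$, not of $\mathrm P_{T,\chi}(n)$ itself.
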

\begin{proof}
In this paragraph, fix an $F$-stable maximal torus $T$ of $G$ and a Borel subgroup $B$ of $G$ containing $T$. In this paragraph, we assume that $q\geq N_G$ (see Remark \ref{rm-qsize} for the integer $N_G$). Let $N_{G,H}$ be the integer introduced in Proposition \ref{pro-vanishing-estimation}.
By Proposition \ref{pro-vanishing-estimation}, Lemma \ref{lem-parition-vanishing} and Remark \ref{rm-badmodel}, we see that there exist at least $(1-N_{G,H}\frac{(q+1)^{\dim (T)-1}}{(q-1)^{\dim(T)}})\# (T^{F})$ many characters $\chi$ of $T^{F}$ such that  Proposition \ref{pro-cohomo-directsum}  exhibits an isomorphism
$$H_c^{2\dim (G)}(\mathcal Y_{T,B}^{H,w_0},i_{w_0}^* \rho_{T,B,H}^*\mathscr L_\chi)\cong H^{2\dim G}_c(G,\mathscr R_{T,\chi}\otimes \mathscr I_H),$$ where $w_0$ is the unique elements of $B(\mathrm k)\backslash G(\mathrm k)/H(\mathrm k)$ such that $B\cdot w_0H$ is the open $B$-orbit on $\mathcal X=G/H$. We note that $i_{w_0}^* \rho_{T,B,H}^*\mathscr L_\chi$ is isomorphic to the $1$-rank constant sheave on $\mathcal Y_{T,B}^{H,w_0}$ by definition. 
Hence the dimension of $H_c^{2\dim (G)}(\mathcal Y_{T,B}^{H,w_0},i_{w_0}^* \rho_{T,B,H}^*\mathscr L_\chi)$ is $1$ given Lemma \ref{lem-irreYTBHw}.
Consequently, for such $\chi$ as  above, we see that $\mathrm P _{T,\chi}(1)$ is a root of unity by Theorem  \ref{thm-pertopcohomo} and Corollary \ref{cor-cohomoweight}. If we further assume the above $\chi$ is regular, then by definition $(-1)^{\sigma(T)+\sigma(G)}\mathrm P_{T,\chi}(1)$ is a positive  integer (see Proposition \ref{regulardl-irr}), yielding $\mathrm P_{T,\chi}(1)=(-1)^{\sigma(T)+\sigma(G)}$. 

In this paragraph, we assume the variable integer $n$ satisfies $q^n\geq N_G$.
Similarly to the first paragraph of this proof, we see that there exist at least
$(1-N_{G,H}\frac{(q^n+1)^{\dim (T)-1}}{(q^n-1)^{\dim(T)}})\# (T^{F^n})$ many characters $\chi$ of $T^{F^n}$ such that $\langle R_{T,\chi}^{(n)},1_{H^{F^n}}\rangle_{{H^{F^n}}}$ is a root of unity for every $F^n$-stable maximal torus $T$ of $G$.
(In proving this claim, all ingredients in the first paragraph should be replaced by their corresponding versions over $\mathbb F_{q^n}$. Note that the integer $N_{G,H}$ remains constant as $n$ varies, by  Proposition \ref{pro-vanishing-estimation}.)
Consequently, we see that 
$$1=\lim\limits_{m\to \infty}
\frac{\#\{\text {regular Deligne-Lusztig characters $\pi$ of $G^{F^m}$ such that $\langle \pi, 1_{H^{F^m}}\rangle_{H^{F^m}}=\pm 1$ }\}}{\#\{\text {regular Deligne-Lusztig characters  of $G^{F^m}$ }\}}.
$$
(Here, we also use the estimation (\ref{ineq-regularchara}) for the size of the set of regular characters.)

    By Proposition \ref{pro-dl-vs-irrerep}, Proposition \ref{regulardl-irr} and the discussion in the last paragraph, we see that
     $$
    \lim_{n\to \infty} \frac{\#\mathcal E^{\diamondsuit}(G,H,n)}{\#\mathcal E(G,n)}=1,
    $$
    as desired.
\end{proof}

Set $\mathcal U$ to be the closed subscheme of $G$ consisting of unipotent elements. Set $i_{\mathcal U}:\mathcal U\hookrightarrow G$ to be the natural inclusion. 
\begin{defn}Fix a Borel pair $(B,T)$ of $G$.
    Recall the morphism $\tau_{T,B}^H : \mathcal Y_{T,B}^H\to G$ introduced in Definition \ref{def-scheme-badmodel}. Let $\tau_{T,B}^{H,\mathcal U}:\mathcal Y_{T,B}^{H,\mathcal U}\to \mathcal U $ be the pullback of $\tau_{T,B}^H$ along $i_\mathcal U$. 
    Set $i_{\mathcal Y,\mathcal U}:\mathcal Y_{T,B}^{H,\mathcal U}\hookrightarrow \mathcal Y_{T,B}^H$ to be the natural inclusion. Let $U^c_{T,B}$  denote the open complement of $\mathcal Y_{T,B}^{H,\mathcal U}$ in $\mathcal Y_{T,B}^H$, and let $j_\mathcal U:U^c_{T,B}\hookrightarrow \mathcal Y_{T,B}^H$ be the open immersion. 
\end{defn}

For the cohomology space, we have the following tautological corollary.
\begin{cor}\label{cor-adjunction-unipotent}
Fix  an $F$-stable maximal torus $T$ of $G$. 
   Suppose that $q\geq N_G$, where $N_G$ is the integer introduced in Remark \ref{rm-qsize}. 
   Suppose further that the necessary condition $\star$ holds for $H$ in the sense of Remark \ref{rm-changeBcondition}. Then for at least $(1-N_{G,H}\frac{(q+1)^{\dim (T)-1}}{(q-1)^{\dim(T)}})\# (T^{F})$ many characters $\chi$ of  $T^F$, the adjunction induces isomorphisms (here, $N_{G,H}$ is the integer introduced in Proposition \ref{pro-vanishing-estimation})
   $$
   H^{2\dim G}_c(G,\mathscr R_{T,\chi}\otimes \mathscr I_H)\xrightarrow{\sim} H^{2\dim G}_c\left(G,(i_\mathcal U)_*\circ i_{\mathcal U}^* (\mathscr R_{T,\chi}\otimes \mathscr I_H)\right).
   $$
\end{cor}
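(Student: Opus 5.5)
We work on $\mathcal Y_{T,B}^H$ rather than on $G$. As in the proof of Corollary \ref{cor-cohomoweight}, the complex $\mathscr R_{T,\chi}\otimes \mathscr I_H$ is identified with $(\tau_{T,B}^H)_!\rho_{T,B,H}^*\mathscr L_\chi$ (Remark \ref{rm-badmodel}). By proper base change, $(i_{\mathcal U})_*i_{\mathcal U}^*$ of this complex is identified with $(\tau_{T,B}^H)_!(i_{\mathcal Y,\mathcal U})_*i_{\mathcal Y,\mathcal U}^*\rho_{T,B,H}^*\mathscr L_\chi$. Under these identifications, the adjunction map in the statement becomes the restriction map
$$H^{2\dim G}_c(\mathcal Y_{T,B}^H,\rho^*_{T,B,H}\mathscr L_\chi)\to H^{2\dim G}_c(\mathcal Y_{T,B}^{H,\mathcal U},i_{\mathcal Y,\mathcal U}^*\rho^*_{T,B,H}\mathscr L_\chi).$$
The open--closed triangle $(j_{\mathcal U})_!j_{\mathcal U}^*\to \mathrm{id}\to (i_{\mathcal Y,\mathcal U})_*i_{\mathcal Y,\mathcal U}^*\to$ gives a long exact sequence. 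Since $\dim \mathcal Y_{T,B}^H=\dim G$ (Proposition \ref{pro-dimYTBH}), the group $H^{2\dim G+1}_c$ of $U^c_{T,B}$ vanishes, so the map is always surjective. Injectivity will follow once we show that the map $H^{2\dim G}_c(U^c_{T,B},j_{\mathcal U}^*\rho^*\mathscr L_\chi)\to H^{2\dim G}_c(\mathcal Y_{T,B}^H,\rho^*\mathscr L_\chi)$ is zero. The simplest route is to show that the top-dimensional components are handled entirely by the unipotent locus.

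We take the same set of at least $(1-N_{G,H}\frac{(q+1)^{\dim T-1}}{(q-1)^{\dim T}})\#T^F$ characters $\chi$ as in the proof of Theorem \ref{thm-almostmulone}. These come from Proposition \ref{pro-vanishing-estimation} combined with Lemma \ref{lem-parition-vanishing}, and for them $H^{\bullet}_c(\mathcal Y_{T,B}^{H,w},i_w^*\rho^*\mathscr L_\chi)=0$ for every $w\in\Theta_{T,B}$. Let $Z=\bigcup_{w\in\Theta_{T,B}}\mathcal Y_{T,B}^{H,w}$, a closed union of strata (closures of $B$-orbits are unions of $B$-orbits), and let $\mathcal Y^{w_0}:=\mathcal Y_{T,B}^{H,w_0}$ be its open complement. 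By dévissage over the finitely many strata, ordered by orbit closure, the cohomology of $Z$, of $Z\cap \mathcal Y_{T,B}^{H,\mathcal U}$, and so on, reduces to the strata. On each $w\in\Theta_{T,B}$ stratum the vanishing is obtained fibrewise as in the proof of Lemma \ref{lem-parition-vanishing}. The same argument applies to the unipotent part of the stratum: for $w\neq w_0$ the fibre is $B\cap wHw^{-1}$ with the map $\mathrm d_T^B$ onto $T_w$, and intersecting with unipotents amounts to taking the preimage of $e\in T_w$.

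This fibrewise step needs care. The unipotent locus of a fibre is $\mathrm d^{-1}(e)$, over which $\mathscr L_\chi$ is trivial, so the vanishing fails on it. I therefore prefer a cleaner route: both sides of the map reduce to the open stratum $\mathcal Y^{w_0}$ in top degree, by Proposition \ref{pro-cohomo-directsum} and its analogue for $\mathcal Y_{T,B}^{H,\mathcal U}$. The lower strata of $\mathcal Y_{T,B}^{H,\mathcal U}$ have dimension $\dim G-\dim T_w<\dim G$ for $w\ne w_0$ since $\dim T_w>0$, so they do not contribute to $H^{2\dim G}_c$.

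It therefore suffices to compare $H^{2\dim G}_c(\mathcal Y^{w_0},\Qlb)$ with $H^{2\dim G}_c(\mathcal Y^{w_0}\cap\mathcal Y_{T,B}^{H,\mathcal U},\Qlb)$. By condition $ii)_B$ the stabilizer $B\cap w_0Hw_0^{-1}$ is connected unipotent, so the fibres of $p_{23}$ over the orbit $\mathcal O_{w_0}$ consist entirely of unipotent elements. Hence $\mathcal Y^{w_0}\subset \mathcal Y_{T,B}^{H,\mathcal U}$, and $\mathcal Y^{w_0}$ is open in $\mathcal Y_{T,B}^{H,\mathcal U}$. Applying Proposition \ref{pro-cohomo-directsum}-style arguments to both sides, the restriction map becomes the identity on $H^{2\dim G}_c(\mathcal Y^{w_0},\Qlb)$, which is one-dimensional by Lemma \ref{lem-irreYTBHw}. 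The main obstacle is the compatibility of the two direct-sum decompositions with the restriction map, together with the dimension bound on the non-open unipotent strata. I would handle both by choosing the open sets $V_w$ in the proof of Proposition \ref{pro-cohomo-directsum} compatibly, taking $V_{w_0}\subset\mathcal Y^{w_0}$ open in both schemes.
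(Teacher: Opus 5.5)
Your final argument is correct and is essentially the paper's: you reduce to the restriction map on $\mathcal Y_{T,B}^H$ and choose $\chi$ via Proposition \ref{pro-vanishing-estimation} so that Lemma \ref{lem-parition-vanishing} kills the $\Theta_{T,B}$-strata. You then use Lemma \ref{lem-YwintersectU} together with the excision trick of Proposition \ref{pro-cohomo-directsum} to see that only the open stratum survives in degree $2\dim G$. The paper instead packages this as the single vanishing $H^{2\dim G}_c(U^c_{T,B},\cdot)=0$, taking the $V_w$ for $w\in\Theta_{T,B}$ inside $U^c_{T,B}$, which sidesteps the compatibility check you flag; your abandoned fibrewise d\'evissage paragraph can simply be dropped.
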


\begin{proof}Let $B$ be a (not necessarily $F$-stable) Borel group of $G$ containing $T$.
    
    By Remark \ref{rm-badmodel} and proper base change, it is equivalent to deal with the map induced by adjunction along $i_{\mathcal Y,\mathcal U}$
    \begin{equation}\label{eq-adjunipotent}
          H_c^{2\dim (G)}(\mathcal Y_{T,B}^H, \rho_{T,B,H}^* \mathscr L_\chi ) \to  H_c^{2\dim (G)}(\mathcal Y_{T,B}^{H,\mathcal U},i_{\mathcal Y,\mathcal U}^* \circ\rho_{T,B,H}^* \mathscr L_\chi ).
    \end{equation}

 By Proposition \ref{pro-vanishing-estimation}, it suffices to show:
    \begin{itemize}
        \item For a character $\chi:T^F\to \Qlb^\times$ such that $r_w^* \mathscr L_{\chi}^T$ is not isomorphic to the constant sheaf $\Qlb$ for every $w\in \Theta_{T,B}$ (as in Proposition \ref{pro-vanishing-estimation}), the morphism (\ref{eq-adjunipotent}) is an isomorphism.
    \end{itemize}
In what follows, we fix one such  $\chi$. Let $\mathscr F$ temporarily denote $\rho^*_{T,B,H}\mathscr L_\chi$ for simplicity.

      In view of the distinguished triangle
    $$
    (j_\mathcal U)_!j_\mathcal U^* \mathscr F\to \mathscr F \to  (i_{\mathcal Y,\mathcal U})_*i_{\mathcal Y,\mathcal U}^*\mathscr F
    $$
    we reduce the problem to show $H^{2\dim (G)}_c\left(\mathcal Y_{T,B}^H,(j_\mathcal U)_!j_\mathcal U^* \mathscr F\right)=0.$

    We claim that \begin{itemize}
        \item there exists an isomorphism $\bigoplus\limits_{w\in \Theta_{T,B}}H^{2\dim (G)}_c(\mathcal Y_{T,B}^{H,w},i_w^*\mathscr F)\cong H^{2\dim (G)}_c\left(\mathcal Y_{T,B}^H,(j_\mathcal U)_!j_\mathcal U^* \mathscr F\right).$
    \end{itemize}
   Granting this claim, we then see $H^{2\dim (G)}_c\left(\mathcal Y_{T,B}^H,(j_\mathcal U)_!j_\mathcal U^* \mathscr F\right)=0$ by Lemma \ref{lem-parition-vanishing} and our choice of $\chi$.

   It remains to show the above claim. Let $w_0\in B(\mathrm k)\backslash G(\mathrm k)/H(\mathrm k)$ be the element such that $B\cdot w_0H$ is the open $B$-orbit in $\mathcal X=G/H$.
   Let $j:V\hookrightarrow \mathcal Y_{T,B}^H$ be as in the proof of Proposition \ref{pro-cohomo-directsum}. By Lemma \ref{lem-irreYTBHw} and Lemma \ref{lem-YwintersectU} below, the open subscheme $U^c_{T,B}$ contains $\eta_w$ (see Definition \ref{def-Yirreduciblecomp}) for all $w\in \Theta_{T,B}$ and does not contain $\eta_{w_0}$.
   Hence we may shrink $V$ such that $V$ is the disjoint union of $V\cap U_{T,B}^c$ and $V_{w_0}$, where $V_{w_0}$ is an open subscheme of $\mathcal Y_{T,B}^{H,w_0}$ of dimension $\dim (G)$. Using distinguished triangles arising from appropriate excisions, we see that there are natural isomorphisms  
   $$
  \bigoplus\limits_{w\in \Theta_{T,B}}H^{2\dim (G)}_c(\mathcal Y_{T,B}^{H,w},i_w^*\mathscr F)\xleftarrow{\sim} H^{2\dim (G)}_c(V\cap U_{T,B}^c,\mathscr F_{|V\cap U_{T,B}^c})\xrightarrow{\sim}H^{2\dim (G)}_c(U_{T,B}^c,\mathscr F_{|U_{T,B}^c}).
   $$
   Moreover, we have a canonical identification $H^{2\dim (G)}_c\left(\mathcal Y_{T,B}^H,(j_\mathcal U)_!j_\mathcal U^* \mathscr F\right)\cong H^{2\dim (G)}_c(U_{T,B}^c,\mathscr F_{|U_{T,B}^c})$,
   as desired.
\end{proof}

In the proof of the above corollary, we need the following lemma.

\begin{lem}\label{lem-YwintersectU}
    Fix a Borel pair $(B,T)$ of $G$.  Suppose that the necessary condition $\star$ holds for $H$ in the sense of Remark \ref{rm-changeBcondition}. Then, for $w\in \Theta_{T,B}$, the closed subscheme $\mathcal Y_{T,B}^{H,\mathcal U}\cap \mathcal Y_{T,B}^{H,w}$ of $\mathcal Y_{T,B}^{H,w}$ is of dimension $\leq \dim (G)-1$. Moreover, for $w_0\in B(\mathrm k)\backslash G(\mathrm k)/H(\mathrm k)$ the specific element such that $B\cdot w_0H$ is the open $B$-orbit on $\mathcal X=G/H$, the closed subscheme $\mathcal Y_{T,B}^{H,\mathcal U}\cap \mathcal Y_{T,B}^{H,w_0}$ of $\mathcal Y_{T,B}^{H,w_0}$ is of dimension $\dim (G)$.
\end{lem}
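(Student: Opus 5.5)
We will use the same $G$-equivariant fibration as in Lemma \ref{lem-dimYTBHw} and Lemma \ref{lem-irreYTBHw}, and compute the dimension of the unipotent locus fibrewise. Restrict $p_{23}$ to $\mathcal Y_{T,B}^{H,\mathcal U}\cap \mathcal Y_{T,B}^{H,w}$. This map is still $G$-equivariant onto the orbit $\mathcal O_w$, which has dimension $\dim(G)-\dim(B\cap wHw^{-1})$. Its fibre over $\wt w=(eB,w_0H)$ is identified with $(B\cap w_0Hw_0^{-1})\cap\mathcal U$, the set of unipotent elements of $B_{w_0}$, since $g$ must lie in $B\cap w_0Hw_0^{-1}$. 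By equivariance, all fibres are conjugate to this one. Hence
\[
\dim\bigl(\mathcal Y_{T,B}^{H,\mathcal U}\cap \mathcal Y_{T,B}^{H,w}\bigr)=\dim(G)-\dim(B\cap wHw^{-1})+\dim\bigl((B\cap wHw^{-1})\cap\mathcal U\bigr).
\]
So it suffices to compare $\dim(B_{w}\cap \mathcal U)$ with $\dim B_w$, where $B_w$ denotes the reduced group $B\cap wHw^{-1}$.

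The unipotent elements of $B$ are exactly $\ker(\mathrm d_T^B)=R_u(B)$. Therefore $B_w\cap\mathcal U=\ker(\mathrm d_T^{B}|_{B_w})$, and its dimension is $\dim B_w-\dim T_w$, where $T_w=\mathrm d_T^B(B_w)$. This gives
\[
\dim\bigl(\mathcal Y_{T,B}^{H,\mathcal U}\cap \mathcal Y_{T,B}^{H,w}\bigr)=\dim(G)-\dim(T_w).
\]

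For $w=w_0$, the open orbit: by Corollary \ref{cor-theBorbit} and condition $i)_B$, the unique $w$ with finite $T_w$ is exactly $w_0$. Thus $\dim T_{w_0}=0$, and the intersection has dimension $\dim G$. (Condition $ii)_B$ even says $B_{w_0}$ is unipotent, so the whole of $\mathcal Y^{H,w_0}_{T,B}$ lies in $\mathcal Y^{H,\mathcal U}_{T,B}$.) For $w\in\Theta_{T,B}$, we have $w\neq w_0$, so by the uniqueness in $i)_B$ the group $T_w$ is not finite, i.e.\ $\dim T_w\geq 1$. This yields the bound $\dim(G)-1$.

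The only delicate points are bookkeeping. First, the fibre computation must be done with reduced structures; dimension is insensitive to this, so it is harmless. Second, the uniqueness in $i)_B$ has to be matched with the open orbit, and Corollary \ref{cor-theBorbit} supplies exactly this. I expect no real obstacle beyond correctly identifying the unipotent part of $B_w$ with the kernel of $\mathrm d_T^B$. Connectedness (Corollary \ref{cor-connectedstabilizer}) is not even needed for the dimension count.
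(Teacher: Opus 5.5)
Your proof is correct and follows essentially the same route as the paper: you fiber $\mathcal Y_{T,B}^{H,\mathcal U}\cap \mathcal Y_{T,B}^{H,w}$ over the diagonal $G$-orbit via $p_{23}$, identify the fibre with the kernel of $\mathrm d_T^B$ restricted to $B\cap wHw^{-1}$, and obtain dimension $\dim(G)-\dim T_w$. The differences are cosmetic. You treat $w_0$ by the same formula, whereas the paper uses $ii)_B$ to see that all of $\mathcal Y_{T,B}^{H,w_0}$ lies in the unipotent locus. You are also right that non-finiteness of $T_w$ already forces positive dimension, without invoking Corollary \ref{cor-connectedstabilizer}.
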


\begin{proof}
  By our assumption,  for all $(g,xB,yH)\in\mathcal Y_{T,B}^{H,w_0}(\mathrm k)$, the element $x^{-1}gx$ is unipotent. This shows that the closed subscheme $\mathcal Y_{T,B}^{H,\mathcal U}\cap \mathcal Y_{T,B}^{H,w_0}$ of $\mathcal Y_{T,B}^{H,w_0}$ contains all $\mathrm k$-points of $\mathcal Y_{T,B}^{H,w_0}$. We win in this case by using Lemma \ref{lem-irreYTBHw}. 

   Fix $w\in \Theta_{T,B}$ in this paragraph. 
   We may rephrase $$\mathcal Y_{T,B}^{H,w}\cap \mathcal Y_{T,B}^{H,\mathcal U}=\{(g,xB,yH)\in \mathcal Y_{T,B}^{H,w}:\mathrm d_T^B(x^{-1}gx)=e\},$$ where $e$ denotes the identity element. By our assumption and Corollary \ref{cor-connectedstabilizer}, the scheme $$\mathrm{d}_T^B(B\cap wHw^{-1})$$ has positive dimension, where we denote a representative of $w$ in $G(\mathrm k)$ again by $w$. 
   Let $$p_{23}:\mathcal Y_{T,B}^{H,w}\cap \mathcal Y_{T,B}^{H,\mathcal U}\to G/B\times G/H$$ temporarily be the projection to the last two factors. Then
we see that $p_{23}$ has its image of dimension $\dim (G)-\dim (B\cap wHw^{-1})$, and each nonempty fibre of $p_{23}$ is of dimension $$\dim (B\cap w Hw^{-1})-\dim (\mathrm{d}_T^B(B\cap wHw^{-1})).$$ Consequently, we have $\dim(\mathcal Y_{T,B}^{H,w}\cap \mathcal Y_{T,B}^{H,\mathcal U})=\dim (G)-\dim (\mathrm{d}_T^B(B\cap wHw^{-1}))$. This completes the proof.
   
\end{proof}

We note that the inclusion $i_\mathcal U:\mathcal U\hookrightarrow G$ is defined over $\mathbb F_q$. Consequently, the derived object $(i_\mathcal U)_*\circ i_{\mathcal U}^* (\mathscr R_{T,\chi}\otimes \mathscr I_H)$ has been endowed with a natural Weil structure, which is induced by the Weil structure of $\mathscr R_{T,\chi}\otimes \mathscr I_H$.

\begin{cor}\label{cor-unipotent-top-cohomo}
    Let the notation and the assumption be as in Corollary \ref{cor-adjunction-unipotent}. Let $\chi:T^F\to \Qlb^\times$ be a character as  in Corollary \ref{cor-adjunction-unipotent}. Suppose further that $\chi $ is regular. Then $H^{2\dim G}_c\left(G,(i_\mathcal U)_*\circ i_{\mathcal U}^* (\mathscr R_{T,\chi}\otimes \mathscr I_H)\right)$ is of dimension $1$. Moreover, we have
    $$
    \mathrm{Tr}(F^*,H^{2\dim G}_c\left(G,(i_\mathcal U)_*\circ i_{\mathcal U}^* (\mathscr R_{T,\chi}\otimes \mathscr I_H)\right))=(-1)^{\sigma (T)+\sigma(G)}q^{\dim (G)}.
    $$
\end{cor}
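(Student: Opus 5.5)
This follows by transporting along the isomorphism of Corollary \ref{cor-adjunction-unipotent} what has already been established in the proof of Theorem \ref{thm-almostmulone}. Fix $\chi$ as in Corollary \ref{cor-adjunction-unipotent}, regular, and a Borel subgroup $B\supset T$. The adjunction map
$$H^{2\dim G}_c(G,\mathscr R_{T,\chi}\otimes \mathscr I_H)\to H^{2\dim G}_c\left(G,(i_\mathcal U)_*\circ i_{\mathcal U}^*(\mathscr R_{T,\chi}\otimes \mathscr I_H)\right)$$
is an isomorphism by that corollary. It is also $F^*$-equivariant, since $i_\mathcal U$ is defined over $\mathbb F_q$ and the Weil structure on the target is induced from the source. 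Both the dimension and the trace of $F^*$ may therefore be computed on the left-hand side.

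For the dimension, I repeat the first paragraph of the proof of Theorem \ref{thm-almostmulone}. By Remark \ref{rm-badmodel}, the left-hand side is $H^{2\dim G}_c(\mathcal Y_{T,B}^H,\rho_{T,B,H}^*\mathscr L_\chi)$. Proposition \ref{pro-cohomo-directsum} decomposes this as a sum over $w$. The summands with $w\in\Theta_{T,B}$ vanish by Lemma \ref{lem-parition-vanishing} and the choice of $\chi$ from Proposition \ref{pro-vanishing-estimation}. The remaining summand, for $w_0$, is the top cohomology of the constant sheaf on $\mathcal Y_{T,B}^{H,w_0}$. This uses that $T_{w_0}$ is trivial by $i)_B+ii)_B$, so $i_{w_0}^*\rho_{T,B,H}^*\mathscr L_\chi$ is constant. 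By Lemma \ref{lem-irreYTBHw}, $\mathcal Y_{T,B}^{H,w_0}$ has a unique component of dimension $\dim G$, so this summand is one-dimensional. Hence the target space is one-dimensional.

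For the trace, Theorem \ref{thm-pertopcohomo} with $n=1$ (here $q\geq N_G$) gives
$$\mathrm{Tr}(F^*,H^{2\dim G}_c(G,\mathscr R_{T,\chi}\otimes\mathscr I_H))=q^{\dim G}\,\mathrm P_{T,\chi}(1).$$
By Corollary \ref{cor-cohomoweight}(ii), the unique eigenvalue divided by $q^{\dim G}$ is a root of unity, so $\mathrm P_{T,\chi}(1)$ is a root of unity. Since $\chi$ is regular, Proposition \ref{regulardl-irr} makes $(-1)^{\sigma(T)+\sigma(G)}\mathrm P_{T,\chi}(1)$ a nonnegative integer, namely a multiplicity; being a root of unity, it equals $1$. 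Thus $\mathrm P_{T,\chi}(1)=(-1)^{\sigma(T)+\sigma(G)}$, and transporting through the equivariant isomorphism gives the stated trace $(-1)^{\sigma(T)+\sigma(G)}q^{\dim G}$.

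The step needing the most care is the compatibility of Weil structures. Remark \ref{rm-badmodel} uses a possibly non-$F$-stable $B$, so the identification of the one-dimensional space with the $w_0$-summand holds only as $\Qlb$-sheaves. This is why I compute the trace intrinsically through $\mathrm P_{T,\chi}(1)$ via Theorem \ref{thm-pertopcohomo}, and use the $B$-model only for the dimension count. The trace computation then relies only on the equivariance of the adjunction isomorphism.
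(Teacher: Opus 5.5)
Your proof is correct and follows the paper's argument. As in the paper, the trace comes from Theorem \ref{thm-pertopcohomo}, Corollary \ref{cor-cohomoweight}(ii), regularity via Proposition \ref{regulardl-irr}, and the $F^*$-equivariant adjunction isomorphism of Corollary \ref{cor-adjunction-unipotent}. The only difference is cosmetic: you count the dimension on the source through Proposition \ref{pro-cohomo-directsum} and Lemma \ref{lem-parition-vanishing}, reusing the proof of Theorem \ref{thm-almostmulone}, whereas the paper counts top-dimensional components of the unipotent locus $\mathcal Y_{T,B}^{H,\mathcal U}$ directly via Lemmas \ref{lem-irreYTBHw} and \ref{lem-YwintersectU}; the paper's count works for every $\chi$, since the pulled-back sheaf is constant there.
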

\begin{proof}For the dimension of $H^{2\dim G}_c\left(G,(i_\mathcal U)_*\circ i_{\mathcal U}^* (\mathscr R_{T,\chi}\otimes \mathscr I_H)\right)$, combine  Lemma \ref{lem-irreYTBHw},  \ref{lem-YwintersectU} and Remark \ref{rm-badmodel}. (The proof is similar to that of Corollary \ref{cor-adjunction-unipotent}.)
  It remains to show the last identity.
    By Theorem \ref{thm-pertopcohomo} and Corollary \ref{cor-adjunction-unipotent}, we have
    $$
  \langle R_{T,\chi},1_{H^F}\rangle_{H^F}= \mathrm P_{T,\chi}(1)= \frac{1}{q^{\dim (G)}}\mathrm{Tr}(F^*,H^{2\dim G}_c\left(G,(i_\mathcal U)_*\circ i_{\mathcal U}^* (\mathscr R_{T,\chi}\otimes \mathscr I_H)\right))
    $$
    Since $\chi$ is assumed to be regular, we see that $(-1)^{\sigma(T)+\sigma(G)} R_{T,\chi}$ is the character of an irreducible representation of $G^F$ by Proposition \ref{regulardl-irr}. Hence the left-hand side of the above equation is of the form $(-1)^{\sigma(T)+\sigma(G)}k$ for some positive integer $k$.  The right-hand side of the above equation is a root of unity by Corollary \ref{cor-cohomoweight} and Corollary \ref{cor-adjunction-unipotent}. And we conclude the proof.
\end{proof}

\begin{rmk}
    It is well known that the isomorphism class of (the complexes of) Weil sheaves $i^{*}_\mathcal U\mathscr R_{T,\chi}$ is independent of the choice of the character $\chi$. Consequently, the isomorphism class of the complex of Weil sheaves $(i_\mathcal U)_*\circ i_{\mathcal U}^* (\mathscr R_{T,\chi}\otimes \mathscr I_H)$ is likewise independent of the choice of $\chi$. But we systematically ignore this fact in the rest of this paper.
\end{rmk}

\subsection{Refinement}We retain the notation as in the previous subsection.

 This subsection is dedicated to proving the following theorem.
 \begin{thm}\label{thm-refine-main}
     Suppose that the necessary condition $\star$ holds for $H$ in the sense of Remark \ref{rm-changeBcondition}. Then for any Borel subgroup $B$ of $G$, the group $B\cap w H w^{-1}$ (endowed with the reduced scheme structure) introduced in $ii)_B$ of Remark \ref{rm-changeBcondition} is the trivial algebraic group. 
 \end{thm}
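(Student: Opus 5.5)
The plan is to compute the multiplicity $\langle \mathrm{St}_{G^{F^n}},1_{H^{F^n}}\rangle_{H^{F^n}}$ of the Steinberg representation in two ways. The first uses the cohomological machinery of Section \ref{sec-cohomological-interpret}, which gives a value of absolute value one. The second is a direct point count from the character formula of $\mathrm{St}$, which should be of size comparable to $q^{-n\dim(B\cap w_0Hw_0^{-1})}$. Comparing the two forces $\dim(B\cap w_0Hw_0^{-1})=0$. By Corollary \ref{cor-connectedstabilizer} (via $ii)_B$) this group is connected unipotent, so it is then trivial. Throughout, $w_0$ denotes the open $B$-orbit (Corollary \ref{cor-theBorbit}). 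By Remark \ref{rm-changeBcondition} the statement is independent of $B$, so I may take an $F$-stable Borel pair $(\mathrm B,\mathrm T)$ and replace $q$ by $q^n$ with $q^n\ge N_G$.

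\textbf{First computation (cohomological side).} I use the Deligne--Lusztig expression
\[
\mathrm{St}=\frac{1}{\#\mathrm W}\sum_{v\in\mathrm W}(-1)^{\sigma(G)+\sigma(T_v)}R_{T_v,1}.
\]
Via Theorem \ref{thm-pertopcohomo}, each $\langle R_{T_v,1},1_H\rangle$ is $q^{-\dim G}$ times the trace on $H^{2\dim G}_c(G,\mathscr R_{T_v,1}\otimes\mathscr I_H)$. The key point is that the isomorphism class of $i_{\mathcal U}^*\mathscr R_{T,\chi}$ is independent of $\chi$ (Green functions). Hence by Corollary \ref{cor-adjunction-unipotent}, the unipotent top cohomology computed in Corollary \ref{cor-unipotent-top-cohomo} is the same for $\chi=1$ as for a generic regular $\chi$. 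It is therefore one-dimensional with trace $(-1)^{\sigma(T_v)+\sigma(G)}q^{\dim G}$.

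It remains to control the kernel of the restriction map for $\chi=1$. These are the contributions of the non-open orbits $w\in\Theta_{T,B}$ with $r_w^*\Qlb$ constant. For the trivial character they do not vanish, so I would instead exploit the alternating sign in the Steinberg average. The non-unipotent contributions come from orbits with $\dim T_w>0$. After averaging over $v$ with the signs, they should assemble into $\mathrm{Ind}$-type sums over proper tori that cancel; this is the standard cancellation making $\mathrm{St}$ vanish off semisimple elements. The outcome I aim for is $\langle \mathrm{St},1_{H^F}\rangle=\frac{1}{\#\mathrm W}\sum_v 1\cdot(\pm1)$ up to a sign that is constant in $n$. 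In particular this has absolute value $1$ for all sufficiently divisible $n$.

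\textbf{Second computation (point counting).} I use $\mathrm{St}(s)=\pm\,|C_G(s)^{\circ F}|_p$ for semisimple $s$ and $\mathrm{St}=0$ off semisimple elements. This gives
\[
\langle\mathrm{St},1_{H^F}\rangle=\frac{1}{\#H^{F}}\sum_{s\in H^F\ \text{semisimple}}\pm|C_G(s)^{F}|_p .
\]
I would stratify the sum by the $G$-conjugacy type of $s$. The dominant stratum consists of elements of $H$ that are regular semisimple in $G$; if $H$ contains no such elements, I stratify by centralizer type instead. The count of $F$-points of each stratum is a polynomial-type function in $q$ by Lang's theorem. Its degree is bounded by $\dim(H$-conjugacy classes of a maximal torus of $H)$ plus the unipotent dimension of the centralizer.

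Using $\dim H=\dim G-\dim B+\dim(B\cap w_0Hw_0^{-1})$ (openness of $Bw_0H$) and $\mathrm{rk}(\mathcal X)=\mathrm{rk}(T)$ from Corollary \ref{cor-theBorbit}, I expect the whole sum to be $O(q^{-\dim(B\cap w_0Hw_0^{-1})})$. If that dimension is positive, this contradicts the first computation as $n\to\infty$.

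\textbf{Main obstacle.} The hardest step is the first computation, specifically showing that the contributions of $w\in\Theta_{T,B}$ cancel in the Steinberg combination for $\chi=1$, where Lemma \ref{lem-parition-vanishing} gives no vanishing. If this fails, my fallback is to weaken the claim: show only that $\langle\mathrm{St},1_H\rangle$ is a nonzero integer. This would follow from an integrality argument via Lemma \ref{lem-const} applied to the geometric-type function $n\mapsto\langle\mathrm{St}_{G^{F^n}},1_{H^{F^n}}\rangle$. Nonzero-ness should be established by evaluating the leading $q$-power coefficient of the orbit $w_0$ alone, since it dominates in the top cohomology. A nonzero integer limit is still incompatible with the $O(q^{-\dim})$ bound from the point count, which would finish the proof.
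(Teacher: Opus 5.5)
\textbf{There is a genuine gap: the first computation is false as stated, and the fallback does not repair it.} Under $\star$, the multiplicity $\langle \mathrm{St},1_{H^{F^n}}\rangle_{H^{F^n}}$ is in general not $\pm1$.

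Take $G=\mathrm{PGL}_2$ and let $H$ be the split diagonal torus. Then $\mathcal X$ is the variety of ordered pairs of distinct points of $\mathbb P^1$. The open $B$-orbit has trivial stabilizer. The two other orbits have one-dimensional torus stabilizers that surject onto the maximal torus. So $\star$ holds, and even condition (ii) of Theorem \ref{thm-intro} holds. However, $\mathrm{St}$ equals $q$ at $e$ and $1$ at the other $q-2$ points of $H^F$, so $\langle\mathrm{St},1_{H^F}\rangle=(2q-2)/(q-1)=2$ for every $q$. The same value $2$ occurs for the Bessel pair $(\mathrm{GL}_2\times\mathrm{GL}_1,\mathrm{GL}_1)$ of Example \ref{example-U-VS-GL}.

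The failure sits exactly at the step you flagged. For $\chi=1$, every $w\in\Theta_{T,B}$ contributes a one-dimensional summand to $H^{2\dim G}_c$, by Proposition \ref{pro-cohomo-directsum} and Lemma \ref{lem-irreYTBHw}, since $r_w^*\Qlb$ is constant. By Lemma \ref{lem-YwintersectU}, these summands live over elements with nontrivial semisimple part. $\mathrm{St}$ vanishes on non-semisimple elements, not on non-unipotent ones, so there is no ``standard cancellation'' to invoke.

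The same observation breaks your fallback. When $\chi=1$, the orbit $w_0$ does not dominate the top cohomology. Nothing in the proposal shows that the signed Steinberg average of the orbit contributions is nonzero.

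The point-count side is also only heuristic. Write $d=\dim(B\cap w_0Hw_0^{-1})$. Only the $s=e$ term is visibly $O(q^{-nd})$; the non-identity semisimple strata are not controlled. Without $\star$ the bound is false: for $G=\mathrm{GL}_2$ with $H$ the diagonal torus, $d=1$ but $\langle\mathrm{St},1_{H^F}\rangle=2$. You do not say how $\star$ would change this.

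\textbf{The missing idea is to never form $\langle\mathrm{St},1_H\rangle$.} Instead, apply the Steinberg combination only on the unipotent locus $\mathcal U$, and with regular characters rather than $\chi=1$; this is the paper's argument.
\begin{itemize}
\item For each class of $F$-stable $T$, choose a regular $\chi_T$ as in Corollary \ref{cor-unipotent-top-cohomo}. Then $\sum_T\frac{(-1)^{\sigma(T)+\sigma(G)}}{\#W(T)^F}\mathrm P_{T,\chi_T}(1)=\sum_T\frac{1}{\#W(T)^F}=1$.
\item By Theorem \ref{thm-pertopcohomo} and Corollaries \ref{cor-adjunction-unipotent} and \ref{cor-unipotent-top-cohomo}, this equals the same weighted sum of normalized traces on $H^{2\dim G}_c\bigl(G,(i_{\mathcal U})_*i_{\mathcal U}^*(\mathscr R_{T,\chi_T}\otimes\mathscr I_H)\bigr)$.
\item By Lemma \ref{lem-unipotent-weight}, that sum is the limit of the normalized full Lefschetz numbers on $\mathcal U$, taken along $n\equiv 1 \bmod K$, with $K$ chosen so that the tori classes and signs are stable.
\item Your observation that $R_{T,\chi}$ and $R_{T,1}$ agree on unipotent elements now applies. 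The weighted sum of those Lefschetz numbers becomes $\sum_{u\in\mathcal U^{F^n}}\mathrm{St}(u)\,\mathrm{Ind}_{H^{F^n}}^{G^{F^n}}(1)(u)=\mathrm{St}(e)\,\#\mathcal X^{F^n}$, because $\mathrm{St}$ vanishes at nontrivial unipotents.
\item Hence $\#\mathcal X^{F^n}/\#\mathrm B^{F^n}\to1$, so $\dim\mathcal X=\dim B$ by Lemma \ref{lem-j-dimscheme}. The open-orbit stabilizer is then finite, and by $ii)_B$ (equivalently Corollary \ref{cor-connectedstabilizer}) it is connected, hence trivial.
\end{itemize}
This route avoids both problems. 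Regularity of $\chi_T$ kills the non-open orbits, and the semisimple strata never appear because everything happens on $\mathcal U$.
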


We need some preliminaries.

\begin{lem}\label{lem-unipotent-weight}
    In the situation of Corollary \ref{cor-unipotent-top-cohomo}, we have 
    $$H^{i}_c\left(G,(i_\mathcal U)_*\circ i_{\mathcal U}^* (\mathscr R_{T,\chi}\otimes \mathscr I_H)\right)=0
    $$
    for integers $i\geq 2\dim (G)+1$ or $i\leq-1$.
    For $0\leq j\leq 2\dim (G)-1$ and eigenvalues $\alpha$ of $F^*$ on $H^{j}_c\left(G,(i_\mathcal U)_*\circ i_{\mathcal U}^* (\mathscr R_{T,\chi}\otimes \mathscr I_H)\right),
    $
    we have $$
    |\alpha|\leq q^{j/2}
    $$
    for all identifications $\Qlb\cong \mathbb C$.
\end{lem}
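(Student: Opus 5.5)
By proper base change, the plan is to move the whole computation onto the unipotent part $\mathcal Y_{T,B}^{H,\mathcal U}$ and then use the dimension bound together with Deligne's weight theory. First, as in the proof of Corollary \ref{cor-cohomoweight}, I would pass to a sufficiently divisible power of $F$. This lets me assume $T$ lies in an $F$-stable Borel subgroup $B$, so Corollary \ref{cor-Fstableperiods} applies. Passing to a power is harmless: the vanishing statement does not depend on the power, and the bound $|\alpha|\leq q^{j/2}$ for all eigenvalues of $F^*$ follows from the bound $|\alpha^N|\leq q^{Nj/2}$ for eigenvalues of $(F^N)^*$.

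Next I would use Corollary \ref{cor-Fstableperiods} to identify $\mathscr R_{T,\chi}\otimes\mathscr I_H$ with $(\tau_{T,B}^H)_!\rho_{T,B,H}^*\mathscr L_\chi$ as Weil complexes. Proper base change along $i_{\mathcal U}$ then gives an $F^*$-equivariant identification
$$H^{i}_c\left(G,(i_\mathcal U)_*\circ i_{\mathcal U}^* (\mathscr R_{T,\chi}\otimes \mathscr I_H)\right)\cong H^i_c\left(\mathcal Y_{T,B}^{H,\mathcal U},i_{\mathcal Y,\mathcal U}^*\rho_{T,B,H}^*\mathscr L_\chi\right).$$
The sheaf on the right is a rank-one lisse sheaf. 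It is pure of weight $0$, being the pullback of a Kummer-type local system $\mathscr L_\chi$ whose Frobenius traces are roots of unity.

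For the range of nonvanishing, $\mathcal Y_{T,B}^{H,\mathcal U}$ is a closed subscheme of $\mathcal Y_{T,B}^H$, so it has dimension $\leq\dim G$ by Proposition \ref{pro-dimYTBH}. Hence compactly supported cohomology of a sheaf on it vanishes outside $[0,2\dim G]$. The weight bound for $j\leq 2\dim G-1$, and indeed for every $j$, is then Deligne's theorem \cite[Th\'eor\`eme 3.3.1]{D}: for a sheaf that is mixed of weights $\leq 0$, $H^j_c$ is mixed of weights $\leq j$. This gives $|\alpha|\leq q^{j/2}$ under any identification $\Qlb\cong\mathbb C$.

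I do not expect any step to be a serious obstacle. The only point needing care is checking that the Weil structure on $(i_\mathcal U)_*i_\mathcal U^*(\cdot)$ matches the one coming from Grothendieck's construction on $\mathcal Y_{T,B}^{H,\mathcal U}$. This follows from Corollary \ref{cor-FstableBorel} together with the fact that $i_\mathcal U$ is defined over $\mathbb F_q$. Once that is in place, the lemma is the analogue of Corollary \ref{cor-cohomoweight}(i) for the closed subscheme of unipotent elements.
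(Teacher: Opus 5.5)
Your proposal is correct and follows essentially the same route as the paper: pass to a power of $F$ making $B$ stable, then use proper base change to identify the cohomology, compatibly with Frobenius via Corollary \ref{cor-FstableBorel}, with $H^i_c(\mathcal Y_{T,B}^{H,\mathcal U},i^*_{\mathcal Y,\mathcal U}\rho^*_{T,B,H}\mathscr L_\chi)$. Then conclude from $\dim\mathcal Y_{T,B}^{H,\mathcal U}\leq\dim G$ (Proposition \ref{pro-dimYTBH}) and Deligne's weight theorem; your step of transferring the bound from $(F^N)^*$ to $F^*$ by taking $N$-th roots is the reduction the paper uses implicitly.
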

\begin{proof}
    The proof is essentially identical to that of Corollary \ref{cor-cohomoweight}, which we sketch below.

Fix a Borel subgroup $B$ of $G$ containing $T$.
    We replace the field $\mathbb F_q$ by an extension of sufficiently divisible degree $N$ such that $B$ is $F^N$-stable. Then we identify $H^{i}_c\left(G,(i_\mathcal U)_*\circ i_{\mathcal U}^* (\mathscr R_{T,\chi}\otimes \mathscr I_H)\right)$ with $H^{i}_c(\mathcal Y_{T,B}^{H,\mathcal U},i^*_{\mathcal Y,\mathcal U} \circ\rho^*_{T,B,H}\mathscr L_\chi)$. This identification is compatible with the action of $(F^{N})^*$ by Corollary \ref{cor-FstableBorel}. Our assertion follows from the fact that $\dim \mathcal Y_{T,B}^{H,\mathcal U}\leq \dim (G)$ (Proposition \ref{pro-dimYTBH}) and Deligne's weight theory \cite[Th\'eor\`eme 3.3.1]{D}.
\end{proof}
We need some facts about Steinberg representations. Let $\mathrm {St}_G$ denote the character of the Steinberg representation of $G^{F}$.

The following lemma  follows from  \cite[(7.14.2)]{DL}.
\begin{lem}\label{lem-steinberg-combi}
    
    We have
    $$
    \mathrm{St}_G=\sum\limits_{(T)}\frac{(-1)^{\sigma(T)+\sigma(G)}}{\#W(T)^F} R_{T,1},
    $$
    where $\sum\limits_{(T)}$ means sum over the $G^F$-conjugacy classes of $F$-stable maximal tori of $G$.
\end{lem}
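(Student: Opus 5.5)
The plan is to obtain the formula from two standard facts of Deligne--Lusztig theory: the expansion of the trivial character of $G^F$ in Deligne--Lusztig characters, and the behaviour of these characters under Alvis--Curtis duality. Applying the duality to the first expansion then gives the Steinberg formula, which is exactly \cite[(7.14.2)]{DL}.

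First I will recall the expansion of the trivial character,
$$
1_{G^F}=\sum_{(T)}\frac{1}{\#W(T)^F}R_{T,1}.
$$
It follows from the formula for the unipotent projection (the average of $R_{T,1}$ over $F$-stable maximal tori, weighted by the number of $G^F$-conjugates, recovers the regular-class functions) together with Theorem \ref{dl-th6.8}. Concretely, it is the identity $\sum_{(T)}\#W(T)^F{}^{-1}\langle R_{T,1},\rho\rangle_{G^F}=\langle 1,\rho\rangle_{G^F}$, which is \cite[(7.14.1)]{DL}; I will cite it rather than reprove it. As a consistency check, its inner product with itself is $\sum_{(T)}\#W(T)^F{}^{-1}\cdot\#W(T)^F{}^{-1}\cdot\#\{w\in W(T)^F\}$, summed appropriately by Theorem \ref{dl-th6.8}. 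The key input is that $\langle R_{T,1},R_{T',1}\rangle_{G^F}$ vanishes unless $T$ and $T'$ are $G^F$-conjugate, and equals $\#W(T)^F$ when they are.

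Second, I will apply the duality operator $D_G$, the alternating sum of truncations over $F$-stable parabolics. The two facts needed are $D_G(1_{G^F})=\mathrm{St}_G$, which is the classical definition of the Steinberg character as the dual of the trivial one, and
$$
D_G(R_{T,\theta})=(-1)^{\sigma(T)+\sigma(G)}R_{T,\theta}.
$$
The sign here is $\varepsilon_G\varepsilon_T$ in Deligne--Lusztig notation. Since $D_G$ is linear, applying it termwise to the first expansion yields the stated formula immediately.

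The main obstacle is justifying $D_G(R_{T,\theta})=\varepsilon_G\varepsilon_T R_{T,\theta}$. This uses the commutation of Harish-Chandra truncation with Deligne--Lusztig induction, i.e.\ the Mackey-type formula for truncation of $R_{T,\theta}$, together with a combinatorial identity about the parabolics containing $T$ up to conjugacy. Since the paper explicitly allows citing \cite{DL}, I will follow the paper's convention and simply invoke \cite[(7.14.2)]{DL}. The only point to check is the translation of signs: \emph{loc.\ cit.}\ writes $\varepsilon_G\varepsilon_T=(-1)^{\sigma(G)+\sigma(T)}$, and its sum is indexed by $G^F$-classes of $F$-stable maximal tori with weight $1/\#W(T)^F$. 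This matches the normalization of $W(T)^F$ in our Convention.
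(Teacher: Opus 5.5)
Your proposal is correct and in the end is the same argument as the paper's: the paper likewise just cites \cite[(7.14.2)]{DL}, reading $\varepsilon_G\varepsilon_T$ as $(-1)^{\sigma(G)+\sigma(T)}$. Your duality sketch (apply $D_G$ to the expansion of $1_{G^F}$, using $D_G(1_{G^F})=\mathrm{St}_G$ and $D_G(R_{T,\theta})=\varepsilon_G\varepsilon_T R_{T,\theta}$) is a valid modern justification of that formula, but the proof does not need it.
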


The following lemma follows from \cite[Theorem 7.1]{DL}.
\begin{lem}\label{lem-steinberg-value}
    For any $F$-stable maximal torus $T$ of $G$, we have
    $$
    R_{T,1}(e)=(-1)^{\sigma (T)+\sigma(G)}\frac{\#G^F}{\#T^F \cdot\mathrm {St}_G(e)}.
    $$
Moreover, for nontrivial unipotent $u\in G^F$, we have $\mathrm {St}_G(u)=0$.
\end{lem}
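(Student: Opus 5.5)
The first identity is a restatement of the degree formula \cite[Theorem 7.1]{DL}, and the vanishing statement comes from the character formula for the Steinberg representation in \cite[Section 7]{DL}. So the plan is to match normalizations rather than to prove anything new.

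\emph{Step 1: the degree.} For any $F$-stable maximal torus $T$ and any character $\theta$ of $T^F$, \cite[Theorem 7.1]{DL} gives
$$R_{T,\theta}(e)=\varepsilon_G\varepsilon_T\frac{|G^F|_{p'}}{\#T^F}.$$
Here $\varepsilon_K=(-1)^{\mathbb F_q\text{-rank of }K}$, so $\varepsilon_G\varepsilon_T=(-1)^{\sigma(G)+\sigma(T)}$ in the notation $\sigma(\cdot)$ of Subsection \ref{s-convention}. I would take $\theta=1$ and use $|G^F|_{p'}=\#G^F/|G^F|_p$. The first identity then reduces to the equality $\mathrm{St}_G(e)=|G^F|_p$.

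\emph{Step 2: $\mathrm{St}_G(e)=|G^F|_p$.} I would derive this from Lemma \ref{lem-steinberg-combi} by evaluating at $e$ and inserting Step 1:
$$\mathrm{St}_G(e)=|G^F|_{p'}\sum_{(T)}\frac{1}{\#W(T)^F\,\#T^F}.$$
By the Steinberg count of $F$-stable maximal tori, the sum $\sum_{(T)}\frac{1}{\#W(T)^F\,\#T^F}=\frac{1}{\#G^F}\sum_{T}\frac{1}{\#T^F}$, where the second sum runs over all $F$-stable maximal tori. That sum equals $|G^F|_p^2/\#G^F$, which gives $\mathrm{St}_G(e)=|G^F|_p$. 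Alternatively, I would simply cite the classical value of the Steinberg degree.

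\emph{Step 3: vanishing on nontrivial unipotents.} I would invoke \cite[Corollary 7.15]{DL}:
\begin{itemize}
\item $\mathrm{St}_G(g)=0$ unless $g$ is semisimple;
\item for semisimple $s$, $\mathrm{St}_G(s)=\pm|C_G(s)^{\circ F}|_p$.
\end{itemize}
A nontrivial unipotent $u$ is not semisimple, so $\mathrm{St}_G(u)=0$. Taking $s=e$ in the same corollary also reconfirms Step 2.

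\emph{Main obstacle.} The only delicate point is the sign bookkeeping: one must check that the sign $\varepsilon_G\varepsilon_T$ in \cite{DL} agrees with the convention $(-1)^{\sigma(T)+\sigma(G)}$ used here, as in Proposition \ref{regulardl-irr}. Apart from that, the argument is a direct citation.
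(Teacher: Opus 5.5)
Your proposal is correct and follows essentially the same route as the paper, which simply cites \cite[Theorem 7.1]{DL}. Both amount to quoting the Deligne--Lusztig degree formula together with the classical Steinberg character values $\mathrm{St}_G(e)=|G^F|_p$ and $\mathrm{St}_G(u)=0$ for nontrivial unipotent $u$.

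There is one small slip in your Step 2. The orbit count gives $\sum_{(T)}\frac{1}{\#W(T)^F\,\#T^F}=\frac{1}{\#G^F}\,\#\{F\text{-stable maximal tori of } G\}$; there is no factor $1/\#T^F$ inside the sum over all tori. It is then Steinberg's count $\#\{F\text{-stable maximal tori}\}=|G^F|_p^2$ that gives $|G^F|_p^2/\#G^F$, so your conclusion $\mathrm{St}_G(e)=|G^F|_p$ still stands.
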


We need the following elementary fact. (See \cite[Theorem 4.2]{DL}.)

\begin{lem}\label{lem-green-vs-dl}
    Fix any $F$-stable maximal torus $T$ and character $\chi:T^F\to \Qlb^\times$. We have
    $$
    R_{T,\chi}(u)=R_{T,1}(u)
    $$
    for any unipotent $u\in G^F$. 
\end{lem}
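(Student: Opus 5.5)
The plan is to show that on a unipotent element the character value $R_{T,\chi}(u)$ reduces to a Lefschetz number that does not involve $\chi$; the same computation with $\chi=1$ then gives the identity. I will argue directly from the definition of Deligne--Lusztig characters in \cite{DL} rather than just citing \cite[Theorem 4.2]{DL}.

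\textbf{Step 1: the Lefschetz formula.} Recall that $R_{T,\chi}$ is the $\chi$-isotypic part of the virtual $G^F\times T^F$-module $H^\bullet_c(\wt X,\Qlb)$, where $\wt X$ is the Deligne--Lusztig variety attached to a (not necessarily $F$-stable) Borel subgroup containing $T$. Here $G^F$ acts on the left and $T^F$ acts freely on the right. Projecting to the $\chi$-isotypic component gives
\begin{equation*}
R_{T,\chi}(g)=\frac{1}{\#T^F}\sum_{t\in T^F}\chi(t)^{-1}\,\mathcal L\bigl((g,t),\wt X\bigr),
\end{equation*}
where $\mathcal L$ denotes the Lefschetz number (alternating trace on $H^\bullet_c$).

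\textbf{Step 2: Jordan decomposition of the pair.} Take $g=u$ unipotent and fix $t\in T^F$. The finite-order automorphism $(u,t)$ of $\wt X$ has Jordan decomposition $s=(e,t)$ with semisimple part $(e,t)$ and unipotent part $(u,e)$; these commute. The key input is \cite[Theorem 3.2]{DL}: for a finite-order automorphism $\sigma=su$ of a separated scheme of finite type, with $s$ of order prime to $p$ and $u$ of $p$-power order, one has $\mathcal L(\sigma,\wt X)=\mathcal L(u,\wt X^{s})$. Applying it here gives
\begin{equation*}
\mathcal L\bigl((u,t),\wt X\bigr)=\mathcal L\bigl((u,e),\wt X^{(e,t)}\bigr).
\end{equation*}

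\textbf{Step 3: only $t=e$ contributes.} Because $T^F$ acts freely on $\wt X$, the fixed locus $\wt X^{(e,t)}$ is empty for $t\neq e$, so those Lefschetz numbers vanish. Hence
\begin{equation*}
R_{T,\chi}(u)=\frac{1}{\#T^F}\,\chi(e)^{-1}\,\mathcal L\bigl((u,e),\wt X\bigr)=\frac{1}{\#T^F}\,\mathcal L\bigl((u,e),\wt X\bigr).
\end{equation*}
The right-hand side does not depend on $\chi$. Specializing to $\chi=1$ gives $R_{T,\chi}(u)=R_{T,1}(u)$, which is the Green function $Q_T^G(u)$.

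\textbf{Main obstacle.} The only nontrivial ingredient is the Lefschetz fixed-point reduction in Step 2. I will cite it from \cite{DL} rather than reprove it, checking two hypotheses. First, $(u,t)$ has finite order, since $G^F\times T^F$ is finite. Second, its decomposition into a prime-to-$p$ part and a $p$-part is exactly $(e,t)\cdot(u,e)$: $t$ lies in a torus so its order is prime to $p$, and $u$ is unipotent so its order is a power of $p$. The freeness of the $T^F$-action, used in Step 3, is immediate from the construction of $\wt X$ as a $T^F$-torsor over $X$.
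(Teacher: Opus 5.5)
Your proposal is correct and is essentially the argument behind the paper's proof, which simply cites \cite[Theorem 4.2]{DL}: the case $s=e$ of that character formula gives $R_{T,\chi}(u)=Q_T^G(u)$. You derive this special case directly from \cite[Theorem 3.2]{DL} and the freeness of the right $T^F$-action on $\wt X$, which is the same way Deligne--Lusztig show the Green function is independent of the character.
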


\begin{proof}[Proof of Theorem \ref{thm-refine-main}]Note that the assertion is geometrical.
    By replacing $\mathbb F_q$ with an extension of sufficiently large degree, we may assume:\begin{itemize}
        \item[i)] For any $F$-stable maximal torus $T$ of $G$, there exists a regular  character $\chi:T^F\to \Qlb^\times $ as in Corollary \ref{cor-unipotent-top-cohomo}; 
        \item[ii)] For any $F$-stable maximal torus $T$ of $G$ and character $\chi:T^F\to \Qlb^\times$, Theorem \ref{thm-pertopcohomo} holds for  integers $n\geq 1$.
    \end{itemize}
  (And we do assume so in the remainder of this proof.)  

    Fix a rational Borel pair $(\mathrm B_0,\mathrm T_0)$ of $G_0$ over $\mathbb F_q$. Let $\mathrm T\subset \mathrm B$ denote the pullback of $\mathrm T_0\subset \mathrm B_0$ to $\mathrm k$. Then $(\mathrm B,\mathrm T)$ is an $F$-stable Borel pair of $G$.
For each $F$-stable maximal torus $T$ of $G$, we choose a regular character $\chi_T:T^F\to \Qlb^\times$ as in i) above.
We choose and fix a positive integer $k$ satisfying the following: ( Set $\mathrm W=W(\mathrm T)$ to be the Weyl group of $\mathrm T$. )
\begin{itemize}
    \item[$\heartsuit_1$] $F^k$ acts trivially on the Weyl group $\mathrm W$;
    \item[$\heartsuit_2$] $\mathrm T_0\otimes_{\mathbb F_q} \mathbb F_{q^k}$ is split as a torus over $\mathbb F_{q^k}$.
\end{itemize}
Let $K=k|\mathrm W|$.
Choose a set $\mathcal T$ of  representatives of the $G^F$-conjugacy classes of $F$-stable maximal tori of $G$. 
In view of \cite[Proposition 1.16]{DL}, we see from $\heartsuit_1$ and $\heartsuit_2$ that \begin{itemize}
    \item For any positive integer $i$, the set $\mathcal T$ represents the $G^{F^{1+Ki}}$-conjugacy classes of $F^{1+Ki}$-stable maximal tori of $G$. 
    \item As $i$ ranges over  positive integers, the values $(-1)^{\sigma_{1+Ki}(G)}$ and $(-1)^{\sigma_{1+Ki}(T)}$ remain constant for $T\in \mathcal T$, where $\sigma_{1+Ki}(G)$ (resp., $\sigma_{1+Ki}(T)$) denotes the $F^{1+Ki}$-rank of $G$ (resp., $T$).
\end{itemize}

By Theorem \ref{thm-pertopcohomo}, Corollary \ref{cor-adjunction-unipotent} and Corollary \ref{cor-unipotent-top-cohomo}, we have
$$
1=\sum_{T\in \mathcal T}\frac{(-1)^{\sigma(T)+\sigma(G)}}{\#W(T)^F}\mathrm P_{T,\chi_T}(1)=\sum_{T\in \mathcal T}\frac{(-1)^{\sigma(T)+\sigma(G)}}{\#W(T)^F\cdot q^{\dim (G)}}\mathrm{Tr}(F^*,H^{2\dim G}_c\left(G,(i_\mathcal U)_*\circ i_{\mathcal U}^* (\mathscr R_{T,\chi_T}\otimes \mathscr I_H)\right)).
$$
By Lemma \ref{lem-unipotent-weight} and Corollary \ref{cor-unipotent-top-cohomo}, we have
\begin{align*}
    &\lim\limits_{i\to \infty} \frac{1}{q^{(1+Ki)\dim (G)}}\mathrm{Tr}((F^{1+Ki})^*,H^{\bullet}_c\left(G,(i_\mathcal U)_*\circ i_{\mathcal U}^* (\mathscr R_{T,\chi_T}\otimes \mathscr I_H)\right))\\=&\frac{1}{q^{\dim(G)}}\mathrm{Tr}(F^*,H^{2\dim G}_c\left(G,(i_\mathcal U)_*\circ i_{\mathcal U}^* (\mathscr R_{T,\chi_T}\otimes \mathscr I_H)\right)).
\end{align*}
Hence we have
$$
1= \lim\limits_{i\to \infty}\frac{1}{q^{(1+Ki)\dim (G)}}\sum_{T\in \mathcal T}\frac{(-1)^{\sigma(T)+\sigma(G)}}{\#W(T)^F} \mathrm{Tr}((F^{1+Ki})^*,H^{\bullet}_c\left(G,(i_\mathcal U)_*\circ i_{\mathcal U}^* (\mathscr R_{T,\chi_T}\otimes \mathscr I_H)\right)).
$$
It suffices to show the conclusion for $B=\mathrm B$ by the obvious symmetry. Note that for every positive integer $n$, the function $K^n_{\mathscr R_{T,\chi_T}\otimes \mathscr I_H}:G^{F^n}\to \Qlb$ corresponding to the complex $\mathscr R_{T,\chi_T}\otimes \mathscr I_H$ is the virtual character $R_{T,\chi_T \circ \mathrm N_T^n}^{(n)}\cdot \mathrm{Ind}_{H^{F^n}}^{G^{F^n}}(1_{H^{F^n}})$. (For the notation, see Proposition \ref{prop-charactersheaf-main} and the paragraph below Definition \ref{def-periods}.)
By Grothendieck trace formula, Lemma \ref{lem-steinberg-combi}, Lemma \ref{lem-green-vs-dl}, Lemma \ref{lem-steinberg-value} and our assumption on $K$, we have
\begin{align*}
    &\sum_{T\in \mathcal T}\frac{(-1)^{\sigma(T)+\sigma(G)}}{|W(T)^F|} \mathrm{Tr}((F^{1+Ki})^*,H^{\bullet}_c\left(G,(i_\mathcal U)_*\circ i_{\mathcal U}^* (\mathscr R_{T,\chi_T}\otimes \mathscr I_H)\right))\\
    =&\sum_{u\in \mathcal U^{F^{1+Ki}}} \mathrm {St}_G^{1+Ki}(u) \mathrm {Ind}_{H^{F^{1+Ki}}}^{G^{F^{1+Ki}}}(1_{H^{F^{1+Ki}}})(u)\\
    =&\frac{\#G^{F^{1+Ki}}\cdot\#\mathcal X^{F^{1+Ki}}}{\#\mathrm T^{F^{1+Ki}}\cdot\#(G/\mathrm B)^{F^{1+Ki}}},
\end{align*}
where $\mathrm {St}_G^{1+Ki}$ denotes the character of the Steinberg representation of $G^{F^{1+Ki}}$. (Here we implicitly use the fact $R_{\mathrm T,1}^{(1+Ki)}(e)=|(G/\mathrm B)^{F^{1+Ki}}|$, see the discussion before Definition \ref{de-regratio} for the notation.) Consequently, we have (using Lemma \ref{lem-j-dimscheme} below)
$$
1=\lim\limits_{i\to \infty}\frac{\#G^{F^{1+Ki}}\cdot\#\mathcal X^{F^{1+Ki}}}{q^{(1+Ki)\dim(G)}\cdot \#\mathrm {T}^{F^{1+Ki}}\cdot\#(G/\mathrm B)^{F^{1+Ki}}}=\lim\limits_{i\to \infty}\frac{\#\mathcal {X}^{F^{1+Ki}}}{\#\mathrm T^{F^{1+Ki}}\cdot\#(G/\mathrm B)^{F^{1+Ki}}}=\lim\limits_{i\to \infty}\frac{\#\mathcal {X}^{F^{1+Ki}}}{\#\mathrm {B}^{F^{1+Ki}}}.
$$
This indicates $\dim (\mathcal X)=\dim (\mathrm B)$ by Lemma \ref{lem-j-dimscheme}. Thus, the open dense $\mathrm B$-orbit $\mathcal O_\mathcal X^\mathrm B$ has dimension $\dim (\mathrm B)$. Then Corollary \ref{cor-theBorbit} and  \ref{cor-connectedstabilizer} show that $\mathrm B_w^{red}$ is the trivial algebraic group, where $\mathrm B_w$ denotes the $\mathrm B$-stabilizer of $w_0 \in \mathcal X$ (a representative of $w$), and $\mathrm B_w^{red}$ is its reduced subgroup scheme.
\end{proof}

In the above proof, we use the following lemma, which is a trivial instance of Grothendieck trace formula and Deligne's weight theory.
\begin{lem}\label{lem-j-dimscheme}
    Fix  positive integers $n,k$ and a geometrically irreducible algebraic scheme $X_0$ over $\mathbb F_q$. Then the following two items are equivalent (note that we denote the pullback of $X_0$ to $\mathrm k$ by $X$): 
    \begin{itemize}
        \item $
    \lim\limits_{i\to \infty}\frac{\#X^{F^{1+ki}}}{q^{(1+ki)n}}=1;
    $
    \item $\dim (X)=n$.
    \end{itemize}
    
\end{lem}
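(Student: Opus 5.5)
The plan is to use the Lang--Weil estimate, which is how Grothendieck's trace formula combines with Deligne's weight theory. Since $X_0$ is geometrically irreducible of dimension $d:=\dim(X)$, I would cite the trace formula
$$\#X^{F^m}=\sum_{j=0}^{2d}(-1)^j\mathrm{Tr}\bigl((F^m)^*,H^j_c(X,\Qlb)\bigr).$$
I would then establish three facts about this sum.
\begin{itemize}
\item[(a)] $H^{2d}_c(X,\Qlb)$ is one-dimensional, and $F^*$ acts on it by $q^d$. This follows by passing to a smooth dense open subscheme $V_0$ (as in the proof of Lemma \ref{lem-tpduality}): excision identifies $H^{2d}_c(V,\Qlb)$ with $H^{2d}_c(X,\Qlb)$, and Poincar\'e duality identifies it with $H^0(V,\Qlb)(-d)$. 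The latter is one-dimensional because $V$ is irreducible, and $F^*$ acts on it by $q^d$.
\item[(b)] For $j\le 2d-1$, every eigenvalue $\alpha$ of $F^*$ on $H^j_c(X,\Qlb)$ satisfies $|\alpha|\le q^{j/2}\le q^{d-1/2}$, by \cite[Th\'eor\`eme 3.3.1]{D}.
\item[(c)] $H^j_c(X,\Qlb)=0$ for $j>2d$.
\end{itemize}
From (a)--(c),
$$\#X^{F^m}=q^{md}+O\bigl(q^{m(d-1/2)}\bigr),$$
where the implied constant is the total Betti number, which does not depend on $m$. Restricting to $m=1+ki$, this gives
$$\frac{\#X^{F^{1+ki}}}{q^{(1+ki)n}}=q^{(1+ki)(d-n)}\bigl(1+O(q^{-(1+ki)/2})\bigr).$$

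Both directions now follow. If $d=n$, the ratio tends to $1$. Conversely, if $d\ne n$, the ratio tends to $0$ (when $d<n$) or to $\infty$ (when $d>n$), so it cannot tend to $1$.

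The only care point is (a). It needs both geometric irreducibility, which gives a single top-dimensional component so that $H^{2d}_c$ is one-dimensional, and the fact that $F$ fixes that component, so that the eigenvalue is exactly $q^d$ with no root-of-unity factor. Without the latter, the limit along the progression $1+ki$ could fail to be $1$. Both are supplied by the hypothesis on $X_0$, so I expect no real obstacle; everything else is routine.
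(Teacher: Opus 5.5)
Your proposal is correct and takes the route the paper indicates. The paper gives no written proof beyond calling the lemma a trivial instance of the Grothendieck trace formula and Deligne's weight theory, and your argument is exactly that instance: $H^{2d}_c$ is one-dimensional with eigenvalue exactly $q^{d}$ (after passing to the reduced scheme and a smooth dense open, then applying Poincar\'e duality), and the lower-degree terms have weights at most $2d-1$.
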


\begin{rmk}
    A counterpart of Theorem \ref{thm-refine-main} in the characteristic $0$ case can be deduced using ingredients introduced in \cite[Section 6]{Kno} and \cite{Kno2}. See Proposition \ref{pro-littleweyl}.
\end{rmk}

\begin{rmk}\label{rm-bxtrivial-general}
    This remark is independent of the rest of this paper. Using the same strategy as in the proof of Theorem \ref{thm-refine-main}, we may show the following:
    \begin{itemize}
        \item Suppose that for a general $x\in\mathcal X=G/H$, the  $B$-stabilizer $B_x$ (with the reduced scheme structure)  is a connected  unipotent group. Then $B_x$ is trivial  for such $x$.
    \end{itemize}
\end{rmk}

\subsection{Conclusion}
We can formulate the main result of this paper.
\begin{thm}\label{thm-main}
    Fix a reductive group $G_0$ (over $\mathbb F_q$), and let $H_0$ be a connected algebraic subgroup of $G_0$. Let $G$ (resp., $H$) be the pullback of $G_0$ (resp., $H_0$) to $\mathrm k$. Suppose further that $G/H$ is a spherical variety with respect to the $G$-action. Then the following statements are equivalent.
    \begin{itemize}
        \item[(1)] The almost multiplicity-one property holds for the pair $(G,H)$ in the sense of Definition \ref{def-almostmulone};
        \item[(2)] The almost multiplicity-one property holds for the pair $(G,H)$ with respect to principal-series representations in the sense of Definition \ref{def-almostmulone-principal};
        \item[(3)] Fix any Borel subgroup $B$ of $G$, we have $i)_B+ii)_B$ as introduced in Remark \ref{rm-changeBcondition};
        \item[(4)] Fix any Borel subgroup $B$ of $G$, we have $i)_B+ii)_B$ as introduced in Remark \ref{rm-changeBcondition}; for the element $w\in B(\mathrm k)\backslash G(\mathrm k)/H(\mathrm k)$ specified in $i)_B$, the algebraic group $B\cap w Hw^{-1}$ endowed with the reduced scheme structure is  trivial.
    \end{itemize}
\end{thm}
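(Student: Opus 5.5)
The plan is to prove the cycle of implications $(1)\Rightarrow(3)$, $(2)\Rightarrow(3)$, $(3)\Rightarrow(4)$, $(4)\Rightarrow(3)$, $(3)\Rightarrow(1)$ and $(3)\Rightarrow(2)$, assembling results already established in the paper.

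\textbf{Necessity.} The implication $(1)\Rightarrow(3)$ is Corollary \ref{cor-efficient}, applied to an $F$-stable Borel pair. Since the conditions $i)_B+ii)_B$ do not depend on the choice of Borel subgroup (Remark \ref{rm-changeBcondition}), we get $(3)$ for every $B$. Similarly, $(2)\Rightarrow(3)$ is Proposition \ref{pro-principal-efficiency}, combined with the same independence remark.

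\textbf{Equivalence of (3) and (4).} The implication $(3)\Rightarrow(4)$ is exactly Theorem \ref{thm-refine-main}. The converse $(4)\Rightarrow(3)$ is trivial, since (4) includes (3).

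\textbf{Sufficiency.} The implication $(3)\Rightarrow(1)$ is Theorem \ref{thm-almostmulone}. For $(3)\Rightarrow(2)$, I would rerun the argument of Theorem \ref{thm-almostmulone} restricted to the split torus $\mathrm T$ of an $F$-stable Borel pair $(\mathrm B,\mathrm T)$, replacing $F$ by $F^n$. Take the characters $\chi$ of $\mathrm T^{F^n}$ that are regular and avoid the exceptional set of Proposition \ref{pro-vanishing-estimation}. By \eqref{ineq-regularchara}, these form a proportion tending to $1$ of $(\mathrm T^{F^n})^\vee$, provided $q^n\geq N_G$. For each such $\chi$:
\begin{itemize}
\item Theorem \ref{thm-pertopcohomo}, Proposition \ref{pro-cohomo-directsum}, Lemma \ref{lem-parition-vanishing} and Lemma \ref{lem-irreYTBHw} show that $\langle \mathrm{Ind}_{\mathrm B^{F^n}}^{G^{F^n}}\chi,1\rangle$ is a root of unity.
\item Lemma \ref{l-paraind} and Proposition \ref{regulardl-irr} show that this multiplicity is a positive integer, because $\mathrm T$ is split and so the sign is $+1$.
\end{itemize}
Hence the multiplicity equals $1$.

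It remains to pass from characters to representations. By Theorem \ref{dl-th6.8}, $\mathrm{Ind}\chi\cong\mathrm{Ind}\chi'$ only when $\chi'$ lies in the $\mathrm W^{F^n}$-orbit of $\chi$. The set of good characters is $\mathrm W$-stable up to a negligible set, since $X_{\mathrm T,\mathrm B}$ was chosen $W(\mathrm T)$-stable. On the other hand, regular characters have orbits of full size $\#\mathrm W^{F^n}$, while non-regular characters make up a vanishing proportion of all characters. Counting orbits, the ratio $\#\mathcal P^\diamondsuit(G,H,n)/\#\mathcal P(G,n)$ therefore tends to $1$.

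\textbf{Main obstacle.} I expect the only delicate point to be this last normalization: $\mathcal P(G,n)$ counts isomorphism classes of possibly reducible inductions, and one must check that non-regular characters contribute only $o(\#\mathcal P(G,n))$ classes. This holds because each class corresponds to at most $\#\mathrm W$ characters, so the number of classes is at least $\#\mathrm T^{F^n}/\#\mathrm W$, whereas the non-regular characters number $O(q^{n(\dim T-1)})$. Everything else is a direct citation of results established earlier in the paper.
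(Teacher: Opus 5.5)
Your argument is correct, but you close the cycle differently from the paper. The paper proves $(1)\Rightarrow(2)\Rightarrow(3)\Rightarrow(1)$ together with $(3)\Leftrightarrow(4)$.

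\textbf{The paper's route.} The step $(1)\Rightarrow(2)$ comes essentially for free from Corollary \ref{cor-regularprincipal}. Regular principal series are irreducible and make up a proportion at least $1/\#\mathrm W$ of $\mathcal E(G,n)$. Hence the $o(\#\mathcal E(G,n))$ exceptions allowed by (1) are also negligible among the regular principal series, and these in turn exhaust $\mathcal P(G,n)$ asymptotically.

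\textbf{Comparison with your route.} Your $(1)\Rightarrow(3)$ via Corollary \ref{cor-efficient} is exactly this composite $(1)\Rightarrow(2)\Rightarrow(3)$ in disguise, since the proof of that corollary invokes Corollary \ref{cor-regularprincipal} and Proposition \ref{pro-principal-efficiency}. So your separate sufficiency argument $(3)\Rightarrow(2)$ is not needed for the equivalence. That argument reruns Theorem \ref{thm-almostmulone} for the torus of an $F$-stable Borel pair and then counts $\mathrm W^{F^n}$-orbits. It is valid, and its modest advantage is that it proves $(3)\Rightarrow(2)$ using only the torus $\mathrm T$. It needs neither the other $G^{F^n}$-classes of tori nor Proposition \ref{pro-dl-vs-irrerep}. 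The paper's route is more economical overall, since your proof still relies on Corollary \ref{cor-regularprincipal} through Corollary \ref{cor-efficient}.

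\textbf{A small correction.} The maximal torus of an $F$-stable Borel pair need not be split, e.g.\ for unitary groups. What you actually use is $\sigma(\mathrm T)=\sigma(G)$. More simply, $\mathrm{Ind}_{\mathrm B^{F^n}}^{G^{F^n}}\chi$ is a genuine representation, so a multiplicity that is a root of unity must equal $1$.

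\textbf{On the orbit count.} Your count is fine. It does not actually need the set of good characters to be $\mathrm W$-stable, because the multiplicity depends only on the isomorphism class of $\mathrm{Ind}\chi$. Each regular class with multiplicity $\neq 1$ therefore consists entirely of non-good characters.
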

\begin{proof}
    (1) implies (2) by Corollary \ref{cor-regularprincipal}. (2) implies (3) by Proposition \ref{pro-principal-efficiency} and Remark \ref{rm-changeBcondition}. (3) implies (1) by Theorem \ref{thm-almostmulone}. (4) implies (3) by the definition. (3) implies (4) by Theorem \ref{thm-refine-main}.
\end{proof}

\section{Counterpart in Characteristic $0$}\label{sec-0-counterpart}
The main reference for this section is \cite{Kno}.
In this section, we consider the characteristic $0$ counterpart of the condition $i)_B+ii)_B$ introduced in Remark \ref{rm-changeBcondition}. The notation adopted in this section is different from that in the rest of this paper.
In this section, $G$ is a connected reductive group over  $\mathbb C$ and $H$ is a connected algebraic subgroup of $G$ such that $\mathcal X:=G/H$ is a $G$-spherical variety. We fix a Borel pair $(B,T)$ of $G$.

For convenience, we list the counterparts of  conditions $i)_B$ and $ii)_B$ from Remark \ref{rm-changeBcondition}:
\begin{itemize}
            \item[$1)_B$] There exists a unique $w\in B(\mathbb C)\backslash G(\mathbb C)/H(\mathbb C)$ such that the reductive quotient of $B\cap w Hw^{-1}$ is a finite algebraic group.
            \item[$2)_B$] Let $w\in B(\mathbb C)\backslash G(\mathbb C)/H(\mathbb C)$ be as in $1)_B$. The group $B\cap w Hw^{-1}$ endowed with the reduced scheme structure is connected and unipotent. 
            \end{itemize}

Note that affine algebraic groups over a field of characteristic $0$ are smooth (see \cite[Theorem 3.23]{Mi}), hence we may rephrase $2)_B$ as: \begin{itemize}
    \item[$2')_B$]  Let $w\in B(\mathbb C)\backslash G(\mathbb C)/H(\mathbb C)$ be as in $1)_B$. The group $B\cap w Hw^{-1}$ is  connected and unipotent. 
\end{itemize}

Since every unipotent algebraic group over a field of characteristic $0$ is connected (see \cite[Proposition 14.32]{Mi}), the term ``connected'' is redundant in $2)_B$ and $2')_B$,  and may be omitted.

\begin{defn}
    As in \cite[Section 6]{Kno}, For any $G$-variety $X$, we define: (see \cite[Section 2]{Kno} for the definition of $c(\cdot)$ and $\mathrm {rk}(\cdot)$.)
\begin{itemize}
    \item $\mathfrak B(X)$ is the set of $B$-stable closed subvarieties of $X$;
    \item $\mathfrak B_{00}(X):=\{Z\in \mathfrak B(X):c(Z)=c(X); \mathrm {rk}(Z)=\mathrm {rk}(X)\}$.
\end{itemize}  
\end{defn}

\begin{rmk}\label{rm-condition-C}
    For a $G$-variety $X$, we formulate the following condition:
    \begin{itemize}
        \item[$(1)_B$] $\mathrm {rk}( X)=\mathrm {rk}(B)$; 
        \item [$(2)_B$] $\mathfrak{B}_{00}( X)$ is a singleton set;
        \item[$(3)_B$]For a general point $x\in  X$, the $B$-stabilizer $B_x$ is a  unipotent group.
    \end{itemize}

\end{rmk}

We may argue as in Corollary \ref{cor-theBorbit} to see: $1)_B+2)_B$ implies that the $B$-orbit $B\cdot wH$ on $G/H$ is open and dense for the element $w$ specified in $2)_B$.
    Unwinding the definitions, we may rephrase $1)_B+2)_B$ as follows:  \begin{itemize}
        \item[] The condition $(1)_B+(2)_B+(3)_B$ holds for $X=\mathcal X$.
    \end{itemize}

Recall the little Weyl group $W_X$ of a $G$-variety $X$, defined in \cite{Kno2}.
Also, recall the parabolic group  $P(X):=\{g\in G:\text{$gBz=Bz$ for general $z\in X$}\}$ of $G$. The following proposition can be viewed as a characteristic $0$ counterpart of Theorem \ref{thm-refine-main}.
\begin{prop}\label{pro-littleweyl}
Fix a smooth $G$-variety $X$.
    Suppose that the condition $(1)_B+(2)_B+(3)_B$ introduced in Remark \ref{rm-condition-C} holds for $X$. Then the little Weyl group $W_X$ is the Weyl group $\mathrm W$ of $G$. Moreover, the stabilizer $B_x$ in $(3)_B$ is the trivial algebraic group. 
\end{prop}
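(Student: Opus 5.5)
The plan is to use Knop's description of the little Weyl group through the Weyl group action on $\mathfrak B_{00}(X)$ and the invariant-theoretic characterization of $W_X$. Recall from \cite[Section 6]{Kno} that $W$ acts on $\mathfrak B_0(X)$, and that $\mathfrak B_{00}(X)$ is a union of orbits. Moreover, the stabilizer in $W$ of the component $X$ itself is the semidirect product $W_{P(X)}\ltimes W_X$, where $W_{P(X)}$ is the Weyl group of the Levi of $P(X)$ (Theorem 6.2 of \cite{Kno}). By $(2)_B$, $\mathfrak B_{00}(X)=\{X\}$, so $W$ fixes $X$. Hence $\mathrm W=W_{P(X)}\ltimes W_X$.

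Next I would show $P(X)=B$, so that $W_{P(X)}$ is trivial. By $(3)_B$, for general $x$ the stabilizer $B_x$ is unipotent. Since $\mathrm{rk}(X)=\mathrm{rk}(B)$ by $(1)_B$, the torus $T$ acts on the generic $B$-orbit with finite generic stabilizer modulo $U$. Suppose $P(X)\supsetneq B$. Then some simple root $\alpha$ has $P_\alpha\subset P(X)$, and the local structure theorem for $P(X)$ gives a general $x$ such that the Levi's derived group acts trivially on a slice. The character lattice $\chi(X)$ is then orthogonal to $\alpha^\vee$, contradicting $\mathrm{rk}(X)=\mathrm{rk}(T)$ (in that case $\chi(X)$ has finite index in $\chi(B)$, so it pairs nontrivially with $\alpha^\vee$). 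Thus $P(X)=B$, so $W_X=\mathrm W$.

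For triviality of $B_x$, I would compare dimensions. The complexity satisfies $c(X)=\dim X-\dim(B\cdot x)$ for general $x$. By the local structure theorem with $P(X)=B$, there is an open $B$-stable subset $X_0\cong U\times Z$, where the torus $T$ acts on $Z$ and $U$ acts freely. So the $U$-part of $B_x$ is trivial, and $B_x$ equals the $T$-stabilizer of a general point of $Z$. By $(1)_B$, $T$ acts on $Z/\!\!/$ (or generically on $Z$) with rank $\dim T$, so this stabilizer is finite. Being contained in the unipotent group $B_x$ and in a torus, it is trivial. In characteristic $0$, connectedness and smoothness are automatic.

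The main obstacle is the step $P(X)=B$. It needs care with the precise form of the local structure theorem for non-spherical smooth $X$, and with the exact relation $\chi(X)\perp\alpha^\vee$ for $\alpha$ in the Levi of $P(X)$. If that fails, I would use instead \cite[Theorem 2.3]{Kno2}: $W_X$ is generated by reflections and acts on $\chi(X)\otimes\mathbb Q=\chi(T)\otimes\mathbb Q$. I would then argue that the $W$-invariance of the unique element of $\mathfrak B_{00}$ forces every simple reflection into $W_X$.
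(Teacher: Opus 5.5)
Your argument is correct, but it is organized differently from the paper's.

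\textbf{The paper's route.} It first treats only non-degenerate $X$. There it quotes Knop's characterization of $P(X)$ as the largest parabolic to which every character in $\chi(X)$ extends, and gets $P(X)=B$ from $\chi(X)=\chi(B)$. Then $W_X=\mathrm W$ follows from \cite[Theorem 6.2]{Kno}. Triviality of $B_x$ comes from \cite[Theorem 2.3]{Kno2}: $U$ acts freely on $X\setminus D$, and a unipotent $B_x$ lies in $U$. For general $X$, the paper passes to the non-degenerate $\mathrm G_m\times G$-variety $L$, the line bundle $\mathscr O(D')$ minus its zero section. It uses $W_X=W_L$, and observes that the unipotent group $B_{p(l)}$ acts trivially on the fibre, so $B_{p(l)}\subseteq B_l=\{e\}$.

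\textbf{Your route.} You skip this reduction entirely. You only use the easy inclusion that every $\lambda\in\chi(X)$ extends to $P(X)$. This holds for arbitrary $X$, and with $(1)_B$ it excludes every simple root from $P(X)$. You then apply the local structure theorem to $X$ itself with $P[D]=P(X)=B$. There $B_z=T_z$ is diagonalizable, hence trivial once it is unipotent, so your appeal to finiteness via $(1)_B$ is superfluous.

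\textbf{Trade-offs.} Your route is shorter and avoids transferring $(1)_B$--$(3)_B$ to $L$. The price is that you invoke Knop's Theorem 6.2 and the local structure theorem for $X$ itself, possibly degenerate, rather than only for a non-degenerate variety. For the local structure theorem this means choosing a $B$-stable effective divisor $D$ with $\mathscr O(D)$ $G$-linearized and $P[D]=P(X)$, which the paper itself does in its Step 2. The paper's route uses Knop's results only in the setting where it quotes them, at the cost of the line-bundle reduction.

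\textbf{The step you left open.} You flagged $\chi(X)\perp\alpha^\vee$ as the main obstacle and justified it only by appeal to the derived Levi acting trivially on a slice. It has a one-line proof that avoids the slice altogether. By definition of $P(X)$, one has $P(X)z=Bz$ for general $z$. So for $f\in\mathbb C(X)^{(B)}$ of weight $\lambda$, the map $g\mapsto f(gz)$ is an invertible regular function on the connected group $P(X)$. By Rosenlicht it is a scalar multiple of a character of $P(X)$ whose restriction to $B$ is $\lambda^{\pm 1}$. Hence $\lambda$ extends to $P(X)$ for every $X$, degenerate or not.
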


\begin{proof}

Step 1.

We first deal with the case that $X$ is non-degenerate in the sense of \cite[Section 3]{Kno2}. Then $P(X)$ is the largest parabolic subgroup $P$ of $G$ such that all characters in $\chi(X)$ extends to a character of $P$ (see the discussion follows \cite[Theorem 6.2]{Kno}). By the condition $(1)_B$ and $(3)_B$, we see that $\chi(B)=\chi(X)$, forcing $P(X)=B$. In particular, we see that $W_{P(X)}$ the Weyl group of $P(X)$ is the trivial group. By $(2)_B$ and \cite[Theorem 6.2]{Kno}, we see that $W_X=\mathrm {W}$. 

Let $D$ be a $B$-divisor as in  \cite[Section 3, Definition]{Kno2}. By the discussion following \select{loc. cit.}, we see that \cite[Theorem 2.3]{Kno2} applies to the $G$-variety $X$ and the $B$-divisor $D$ (note that $P[D]=P(X)=B$). 
Then \cite[Theorem 2.3]{Kno2} implies that the unipotent radical $U$ of $B$ acts freely on  $X\backslash D$. This shows that the $B$-stabilizer $B_x$ in $(3)_B$ is indeed trivial. And we complete the proof when $X$ is assumed to be non-degenerate.

Step 2.

We deal with a general $X$ satisfying $(1)_B+(2)_B+(3)_B$.
 We choose an effective $B$-divisor $D'$ such that $P[D']=P(X)$ as in \cite[Section 5]{Kno2}. 
We may further assume that $\mathscr O(D')$ is $G$-linearized by replacing $D'$ with a multiple.
Let $p:L\to X$  be the geometric realization of the line bundle $\mathscr O(D')$, with the zero section removed. Then $L$ is a non-degenerate $\mathrm G_m\times G$-variety by \select{loc. cit.}. By the definition of the little Weyl group, we have $W_X=W_L$ (See \cite[Corollary 7.5]{Kno2}). Further, we see that $(1)_{B'}+(2)_{B'}+(3)_{B'}$ holds for $L$ with respect to the reductive group $\mathrm G_m \times G$, where $B'$ is the Borel subgroup $\mathrm G_m\times B$ of $\mathrm G_m\times G$: $(1)_{B'}$ and $(3)_{B'}$ are obvious by the construction of $p$; For $(2)_{B'}$, see the discussion following \cite[Theorem 6.2]{Kno2}. By step 1 (applied to the non-degenerate $\mathrm G_m\times G$-variety $L$), we see that the little Weyl group $W_X$ is exactly the Weyl group of $G$.

It remains to show that for a general $l\in L$, the unipotent group $B_{p(l)}$  is indeed trivial, where $B_{p(l)}$ denotes the $B$-stabilizer of $p(l)$. We fix one such $l$, and let $L_l$ be the fibre of $p$ along $p(l)$. Then the unipotent group $B_{p(l)}$ acts trivially on $L_l$ by the construction of $p$ and $L$. (Note that the fibre $L_l$ is indeed the underlying vector space of a $1$-dimensional algebraic representation of $B_{p(l)}$, with the zero vector removed.) Consequently, the group $B_{p(l)}$ is contained in the $B$-stabilizer $B_l$ of $l$. By step 1, we see that $B_l$ is trivial, forcing $B_{p(l)}$ to be trivial. This completes the proof.
    
\end{proof}


\begin{rmk}
    Parallel to Remark \ref{rm-bxtrivial-general}, we may prove the following statement, using the structure theorem \cite[Theorem 2.3]{Kno2} (note that the structure theorem in \select{loc. cit.} is only applicable to the characteristic $0$ case.):
    \begin{itemize}
        \item Suppose that for a general $x\in G/H$, the $B$-stabilizer $B_x$ is  unipotent. Then $B_x$ is trivial for such $x$.
    \end{itemize}
\end{rmk}

\begin{rmk}\label{rm-strtempered}
    By Proposition \ref{pro-littleweyl} and the preceding discussion, we may rephrase the condition $1)_B+2)_B$ for $\mathcal X$  as (i)+(ii), where we set:
    \begin{itemize}
        \item[(i)]  The little Weyl group $W_\mathcal X$ is the Weyl group $\mathrm W$ of $G$. 
        \item[(ii)] The  $B$-stabilizer $B_x$ for a general $x\in \mathcal X$ is  trivial. 
    \end{itemize}
    Indeed, Proposition \ref{pro-littleweyl} shows that $(1)_B+(2)_B+(3)_B$  implies (i)+(ii). We easily see that (i)+(ii) implies $(1)_B$ and $(3)_B$. Further, by \cite[Theorem 6.2]{Kno2}, the condition (i)+(ii) implies $(2)_B$ for $\mathcal X$. Note that the condition $(1)_B+(2)_B+(3)_B$ is equivalent to $1)_B+2)_B$. (Here, we set the $G$-variety $X$ to be $\mathcal X$ when invoking the conditions listed in Remark \ref{rm-condition-C}.)
\end{rmk}

If the above condition (i)+(ii) holds true for the spherical variety $\mathcal X$, then the conjectural dual group $\check G_\mathcal X$ of $\mathcal X$ in \cite{BZSV} should be the dual group $\check G$ of $G$, indicating that the cotangent bundle $T^* \mathcal X$  is  \textbf{strongly tempered} in the sense of \cite{BZSV} and \cite{WZ}. We remark that in \cite[Section 9]{WZ}, the multiplicity-one property for periods with respect to strongly tempered spherical varieties over local fields has been  studied extensively.

\end{document}